\def\Power #1 { \powerset(#1) }
\newtheorem{thm}{Theorem}[section]
\newtheorem{cor}[thm]{Corollary}
\newtheorem{lm}[thm]{Lemma}
\newtheorem{con}[thm]{Conjecture}
\newtheorem{prop}[thm]{Proposition}
\newtheorem{exm}[thm]{Example}
\newtheorem{rem}[thm]{Remark}
\newtheorem{pr}[thm]{Problem}
\newtheorem{claim}{Claim}
\newtheorem{comments}[thm]{Comments}
\theoremstyle{definition}
\newtheorem{df}[thm]{Definition}
\newcommand{\R}{\mathbb{R}}
\newcommand{\N}{\mathbb{N}}
\newcommand{\Q}{\mathbb{Q}}
\DeclareMathOperator{\ideal}{Id}
\DeclareMathOperator{\ji}{Ji_{\nabla}}
\DeclareMathOperator{\mi}{Mi}
\def\={ \approx  }
\newcommand{\rto}{\rightarrow}
\begin{document}
\author{Kira Adaricheva}
\address{Department of Mathematical Sciences, Yeshiva University,
245 Lexington ave.,
New York, NY 10016, USA}
\email{adariche@yu.edu}
\author{Maurice Pouzet}
\address{ICJ, Math\'ematiques, Universit\'e Claude-Bernard Lyon1, 43 bd. 11 Novembre 1918, 69622 Villeurbanne Cedex, France and Mathematics \& Statistics Department, University of Calgary, Calgary, Alberta, Canada T2N 1N4}
\email{pouzet@univ-lyon1.fr}
\keywords{Convex geometry, algebraic lattice, order-scattered poset, topologically scattered lattice, lattices of relatively convex sets, multi-chains, lattices of subsemilattices, lattices of suborders}
\subjclass[2010]{06A15,06A06,06B23,06B30}

\title{On scattered convex geometries}
\begin{abstract}
A convex geometry is a closure space satisfying the anti-exchange axiom. For several types of algebraic convex geometries we describe when the collection of closed sets is order scattered, in terms of obstructions to the semilattice of compact elements. In particular, a semilattice $\Omega(\eta)$,  that does not appear among minimal obstructions to order-scattered algebraic modular lattices, plays a prominent role in convex geometries case. The connection to topological scatteredness is established in convex geometries of relatively convex sets.
\end{abstract}
\maketitle

\section{Introduction}

We call a pair $(X,\phi)$ of a non-empty set $X$ and a closure operator $\phi: 2^X\rto 2^X$ on $X$ \emph{a convex geometry}\cite{AdaGorTum}, if it is a zero-closed space (i.e. $\overline{\emptyset}=\emptyset$) and $\phi$ satisfies the anti-exchange axiom:
\[
\begin{aligned}
x\in\overline{A\cup\{y\}}\text{ and }x\notin A
\text{ imply that }y\notin\overline{A\cup\{x\}}\\
\text{ for all }x\neq y\text{ in }X\text{ and all closed }A\subseteq X.
\end{aligned}
\]

The study of convex geometries in finite case was inspired by their frequent appearance in modeling various discrete structures, as well as by their juxtaposition to matroids, see \cite{D,EdJa}. More recently, there was a number of publications, see, for example, \cite{A09,SW4,SZ07,S05,W05, adaricheva-nation} brought up by studies in infinite convex geometries. 

A convex geometry is called \emph{algebraic}, if the closure operator $\phi$ is \emph{finitary}. Most of interesting infinite convex geometries are algebraic, such as convex geometries of relatively convex sets, subsemilattices of a semilattice, suborders of a partial order or convex subsets of a partially ordered set. In particular, the closed sets of an algebraic convex geometry form an algebraic lattice, i.e. a complete lattice, whose each element is a join of compact elements. Compact elements are exactly the closures of finite subsets of $X$, and they form a semilattice with respect to the join operation of the lattice.

There is a serious restriction on the structure of an algebraic lattice and its semilattice of compact elements, when the lattice is \emph{order-scattered}, i.e. it does not contain a subset ordered as the  chain of rational numbers $\mathbb Q$. While the description of order-scattered algebraic lattices remains to be an open problem, it was recently obtained in the case of modular lattices. The description is done in the form of \emph{obstructions}, i.e. prohibiting special types of subsemilattices in the semilattice of compact elements.

\begin{thm} \cite{ChaPou0}\label{CP1}
An  algebraic modular lattice is order-scattered iff the semilattice of compact elements is order-scattered and does not contain as a subsemilattice the semilattice $\powerset^{<\omega}(\mathbb N)$ of finite subsets of a countable set.
\end{thm}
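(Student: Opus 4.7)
The plan is to prove the two implications separately, noting that the modular hypothesis is only needed for one direction.

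For the ``only if'' direction, we argue by contrapositive, and modularity is not used. If $K(L)$ is itself not order-scattered, then $\mathbb Q$ embeds in $K(L)\subseteq L$, so $L$ is not order-scattered. Suppose instead that $\iota\colon \powerset^{<\omega}(\mathbb N)\to K(L)$ is an embedding of join-semilattices. For each $A\subseteq \mathbb N$ set $a_A:=\bigvee_L \iota(\powerset^{<\omega}(A))$. I claim $A\mapsto a_A$ is an order-embedding of $\powerset(\mathbb N)$ into $L$. Monotonicity is immediate; for injectivity, if $x\in A\setminus A'$ then $\iota(\{x\})\le a_A$, while $\iota(\{x\})\le a_{A'}$ would, by compactness of $\iota(\{x\})$, produce finitely many $F_1,\dots,F_n\in \powerset^{<\omega}(A')$ with $\iota(\{x\})\le \iota(F_1)\vee\dots\vee\iota(F_n)=\iota(F_1\cup\dots\cup F_n)$, forcing $x\in F_1\cup\dots\cup F_n\subseteq A'$, a contradiction. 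Since $\powerset(\mathbb N)$ contains a copy of $\mathbb Q$ (e.g.\ via Dedekind cuts of dyadic expansions), $L$ is not order-scattered.

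For the ``if'' direction, which is the substantive one, the strategy is proof by contradiction: assume $K(L)$ is order-scattered and omits $\powerset^{<\omega}(\mathbb N)$ as a subsemilattice, yet $L$ contains a chain $(c_q)_{q\in\mathbb Q}$. Using algebraicity, write each $c_q$ as a directed join of compacts and extract, for each pair $q<q'$, a compact witness $k_{q,q'}\le c_{q'}$ with $k_{q,q'}\not\le c_q$. Modularity is then exploited through the isomorphism of intervals $[a\wedge b,b]\simeq [a,a\vee b]$: iterated along the rationals, this transports transversal information across incomparable pairs and lets one prune $(k_{q,q'})$, via a Ramsey-type refinement of the $\mathbb Q$-chain into a uniformly-behaved subchain, down to a countable family $(e_n)_{n\in\mathbb N}\subseteq K(L)$ that is \emph{independent} in the modular sense, i.e.\ $\bigvee_{n\in F}e_n\ne \bigvee_{n\in G}e_n$ whenever $F\ne G$ are finite subsets of $\mathbb N$.

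Such an independent family is exactly a subsemilattice of $K(L)$ isomorphic to $\powerset^{<\omega}(\mathbb N)$, contradicting the hypothesis. The main obstacle, and the place where modularity is indispensable, is this extraction step: without modularity a $\mathbb Q$-chain in $L$ need not produce an independent countable family of compacts, which is precisely why the convex-geometry case studied later in the paper demands additional obstructions beyond $\powerset^{<\omega}(\mathbb N)$. I would expect the cleanest execution of the extraction to first reduce to the case where $L$ is the ideal completion of a modular join-semilattice, then combine the interval-isomorphism calculations with a careful induction on the Hausdorff rank of the scattered poset $K(L)$ to produce the independent sequence.
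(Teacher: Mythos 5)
You should note at the outset that the paper does not prove Theorem \ref{CP1}: it is quoted from \cite{ChaPou0} and used only as motivation, so your attempt can only be judged on its own terms, not against an internal argument. Your ``only if'' direction is correct, and it is the standard argument (essentially the same computation appears in this paper in the proof of Theorem \ref{CP-like}, implication $(ii)\Rightarrow(iii)$): a join-embedding $\iota$ of $\powerset^{<\omega}(\mathbb N)$ into the compact elements extends, by taking directed joins, to an order-embedding of $\powerset(\mathbb N)$ into $L$, injectivity being exactly the compactness argument you give, and $\powerset(\mathbb N)$ contains a copy of the real chain, hence of $\mathbb Q$. You are also right that modularity plays no role in that half.

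The genuine gap is the ``if'' direction, which is the whole content of the theorem and which you only outline. The pivotal step --- pruning the witnesses $k_{q,q'}$ down to an infinite independent family of compacts --- is asserted rather than proved: you never say which partition the ``Ramsey-type refinement'' is applied to, how the modular isomorphism $[a\wedge b,b]\simeq[a,a\vee b]$ acts on the witnesses, or where order-scatteredness of $K(L)$ enters, and your closing sentence concedes that the extraction still has to be designed. The paper's own Sections \ref{omega} and \ref{section:orderscattered} show why no soft version of this step can exist: $\Omega(\eta)$ is an order-scattered join-semilattice sitting inside a product of two chains (so it contains no join-subsemilattice copy of $\powerset^{<\omega}(\mathbb N)$, a product of two chains having no independent set of size three), and yet its ideal lattice is not order-scattered. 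Hence the implicit claim ``a $\mathbb Q$-chain in $L$ together with scatteredness of $K(L)$ yields an infinite independent family of compacts'' is simply false without modularity, and a correct proof must show that modularity excludes every $\Omega(\eta)$-type configuration in the compact semilattice --- the viewpoint indicated in Section \ref{section:orderscattered}, where $L$ is viewed as $\ideal K(L)$ and scatteredness of ideal lattices is governed by the obstruction $\Omega(\eta)$ of \cite{pouzet-zaguia} and \cite{ChaPou2}. Until that modular-specific analysis is supplied, the hard implication remains unproven.
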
  

This theorem  was a motivation to the current investigation, due to the fact that convex geometries almost never satisfy the modular law, see \cite{AdaGorTum}. Thus, studying order-scattered convex geometries would open new possibilities for attacking the general hypothesis about order-scattered algebraic lattices. It is known that outside the modular case the list of obstructions must be longer: the semilattice $\Omega(\eta)$ described in \cite{ChaPou2} is order-scattered and isomorphic to the semilattice of compact elements of an algebraic lattice, which is not order-scattered.
As it turns out, $\Omega(\eta)$ appears naturally as a subsemilattice of compact of elements in the convex geometries known as \emph{multichains}. We show in section \ref {omega} that the semilattice of compact elements of a bichain always contains $\Omega(\eta)$, as long as one of chain-orders has the order-type $\omega$ of natural numbers, and an other has the order-type $\eta$ of rational numbers.

\begin{figure}[h]
    \centering
        \includegraphics[width=0.6\textwidth]{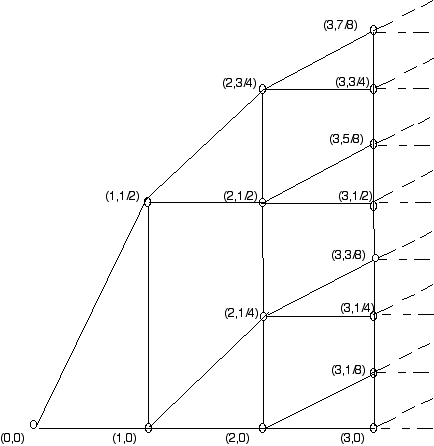}
           \caption{$\Omega(\eta)$}
              \label{O(eta)}
\end{figure}

\vspace{-0.15in}
More generally, in section \ref{section:orderscattered},  we prove in Theorem \ref{fin dim} that any algebraic convex geometry whose semilattice of compact elements $K$ has a finite semilattice dimension will be order-scattered iff $K$ is order-scattered and it does  not have a sub-semilattice isomorphic to  $\Omega(\eta)$.  

As for the other types of convex geometries, we prove the result analogous to modular case. 
It holds true trivially in case of convex geometries of subsemilattices and suborders of a partial order, since order-scattered geometries of these types are always finite, see section \ref{finite}. For the convex geometries of relatively convex sets, we analyze independent sets and reduce the problem to relatively convex sets on a line. As stated in Theorem \ref{CP-like}, the only obstruction in the semilattice of compact elements in this case is $\powerset^{<\omega}(\mathbb N)$. We also discuss the topological issues of the algebraic convex geometries and establish in Theorem \ref{CP-like} that the convex geometry of relatively convex sets is order scattered iff  it is topologically scattered in product topology. This is the result analogous to Mislove's theorem for algebraic distributive lattices \cite{Mis}. Further observations about the possible analogue of Mislove's result in algebraic convex geometries is discussed in section \ref{dist}. In particular, we use some general statements we prove in section \ref{weakly} about weakly atomic convex geometries to build an example of an algebraic distributive lattice that is not a convex geometry.

\section{Preliminaries}

Our terminology agree with \cite{gratzer}. We use the standard notation of $\vee$ and $\wedge$ for the lattice operations of  join and  meet, respectively.
The corresponding notation for infinite join and meet is $\bigvee$ and $\bigwedge$. The lattices where $\bigvee$ and $\bigwedge$ are defined, for arbitrary subsets, are called \emph{complete}. The lattice $L^\sigma$, where operations $\vee$ and $\wedge$ of $L$  are switched, is called \emph{a dual lattice} of $L$. 

Let $L$ be  a lattice. Two elements $x,y$ of  $L$ form \emph{a cover}, denoted by $x\prec y$, if $x <y$ and there is no $z \in L$ such that $x<z<y$. \emph{An interval} in  $L$ is a sublattice of the form $[x,y]:=\{z \in L: x\leq z\leq y\}$, for some $x \leq y$. A lattice, or more generally a poset, is \emph{weakly atomic} if every interval with at least two elements has a cover. 

An element $y \in L$ is called \emph{completely join-irreducible}, if there exists a lower cover $y_*$ of $y$ such that $z < y$ implies $z \leq y_*$, for arbitrary $z \in L$. The set of completely join-irreducible elements is denoted $\ji(L)$. The lattice  $L$ is called \emph{spatial}  if every element is a (possibly, infinite) join  of elements from $\ji(L)$. Dually, one can define the notion of \emph{completely meet-irreducible} elements, and we denote the set of all such elements in  $L$ as $\mi_{\Delta}(L)$. 

Given a non-empty set $X$, a \emph{closure operator} on $X$ is a mapping $\phi: 2^X\rto 2^X$, which is increasing, isotone and idempotent. A subset $Y\subseteq X$ is called \emph{closed} if $Y=\phi(Y)$. The pair $(X, \phi)$ is a \emph{closure system}.  The closure operator $\phi$ is \emph{finitary}, if $\phi(Y)=\bigcup \{\phi(Y'): Y'\subseteq Y, |Y'|<\omega\}$, for every $Y\subseteq X$. The collection of closed sets $Cl(X,\phi)$ forms a complete lattice, with respect to containment. We recall that a lattice $L$ is \emph{algebraic} if it is complete and every element is a (possibly infinite) join of compact elements; these compact elements  form a join-subsemilattice  in $L$.  If $\phi$ is finitary,  the compact elements of the  lattice $L:=Cl(X,\phi)$  are given by $\phi(Y')$, for finite $Y'\subseteq X$, hence $L$ is an algebraic lattice.  The fact that $L$ is algebraic does not ensure that $\phi$ is finitary.  However, every algebraic lattice $L$ is isomorphic to the lattice of closed sets of some finitary closure operator. In fact, if $X \subseteq L$ is the semilattice of compact elements of $L$ then $L$ is isomorphic to  the lattice of closed sets of the closure space $(X,\phi)$, where $\phi(Y)=\{p \in X: p\leq \bigvee Y\}$, for every $Y\subseteq X$. Obviously, this operator $\phi$ is finitary. Alternatively,  $L$ is isomorphic to $\ideal X$, the lattice of ideals of $X$ (recall that an \emph{ideal} of $X$ is a non-empty initial segment which is up-directed). 
Equivalently, $L$ can be thought as a collection of subsets of $X$ closed under arbitrary intersections and  unions of directed families of sets. From topological point of view, $L$ is then a closed subspace of $\mathbf{2}^X$, a topological space with the product topology on the product of $|X|$ copies of two element topological space $\mathbf{2}$ with the discrete topology. Since  $\mathbf{2}^X$ is a compact topological space (due to Tichonoff's theorem), $L$ becomes a compact space, too.

We recall that  if $(X, \phi)$ is a closure system, a subset  $Y\subseteq X$ is  \emph{independent} if $y \not \in \phi(Y\setminus y)$, for every $y \in Y$. A basic property of independent sets is that \emph{a closure  $(X, \phi)$ has no infinite independent subset iff the power set $\powerset(\N)$ ordered by inclusion, is not embeddable into $Cl(X, \phi)$; furthermore, if $\phi$ is finitary  this amounts to the fact that  the semilattice $S$ of compact elements of $Cl(X, \phi)$  does not have $\powerset^{<\omega}(\mathbb{N})$ as a join-subsemilattice}, see for example  \cite{ChaPou, lmp3}.
Finally, let  $X'$ be a subset of $X$. The closure operator $\phi_{X'}$ \emph{induced by} $\phi$ \emph{on}  $X'$ is defined by setting $\phi_{X'}(Y):= \phi(Y)\cap X'$ for every subset $Y$ of $X'$. Clearly $Cl(X', \phi_{X'})= \{Y\cap X': Y\in Cl(X, \phi)\}$. With the definition of induced closure, a subset $Y$ of $X'$ is independent w.r.t. $\phi_{X'}$ iff it is independent w.r.t. $\phi$. Let $\rho_{X'}: Cl(X, \phi)\rightarrow Cl(X', \phi_{X'})$ defined by setting $\rho_{X'}(Y):= Y\cap X'$ and $\theta_{X'}: Cl(X', \phi_{X'})\rightarrow Cl(X, \phi)$ defined by setting $\theta_{X'}(Y):=\phi(Y)$. These two maps are order preserving and the composition map $\rho_{X'}\circ \theta_{X'}$ is the identity  on $Cl(X', \phi_{X'})$. We  will use repeatedly the following result whose proof is left to the reader. 
\begin{lm}\label{lm:embedding}
Let $(X, \phi)$ be  closure and $(X_i)_{i\in I}$ be a family of sets whose union is $X$. Then the map $\rho$ from  $Cl(X, \phi)$ into the direct product  $\Pi_{i\in I}  C(X_i, \phi_{X_i})$ and defined by setting  $\rho (Y):= (Y\cap X_i)_{i\in I}$ is an order-embedding. Furthermore, if $\phi$ is finitary, this map is continuous. \end{lm}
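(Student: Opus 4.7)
The plan is to verify separately that $\rho$ is well-defined, that it is order-preserving and order-reflecting (so an order-embedding), and finally that under the finitary hypothesis it is continuous. I would begin with well-definedness: for any $Y\in Cl(X,\phi)$ and any $i\in I$, the computation
\[
\phi_{X_i}(Y\cap X_i)=\phi(Y\cap X_i)\cap X_i \;\subseteq\; \phi(Y)\cap X_i \;=\; Y\cap X_i
\]
shows that $Y\cap X_i$ is closed in $(X_i,\phi_{X_i})$, so $\rho$ does land in the stated product.

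Order-preservation is immediate, since for fixed $X_i$ the map $Y\mapsto Y\cap X_i$ is monotone. For the reflection of order, I would suppose that $Y\cap X_i\subseteq Z\cap X_i$ for every $i\in I$ and use the covering hypothesis $X=\bigcup_{i\in I}X_i$ to conclude
\[
Y=\bigcup_{i\in I}(Y\cap X_i)\;\subseteq\;\bigcup_{i\in I}(Z\cap X_i)=Z.
\]
This is the only place where $(X_i)_{i\in I}$ being a cover of $X$ intervenes. In particular $\rho$ is injective, so it is an order-embedding.

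For continuity under the finitary hypothesis, I would first observe that each induced closure $\phi_{X_i}$ is itself finitary, so both $Cl(X,\phi)$ and $Cl(X_i,\phi_{X_i})$ sit as closed subspaces of $\mathbf{2}^X$ and $\mathbf{2}^{X_i}$, each carrying the subspace product topology as described in the preliminaries. Continuity of $\rho$ into the product then reduces to continuity of each coordinate map $Y\mapsto Y\cap X_i$, and this coordinate map is simply the restriction of the canonical projection $\mathbf{2}^X\to\mathbf{2}^{X_i}$, whose continuity is immediate from the definition of the product topology (a subbasic open set in $\mathbf{2}^{X_i}$ specifying the value at some $x\in X_i$ pulls back to the analogous subbasic open set in $\mathbf{2}^X$). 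I do not expect any genuine obstacle here: the argument is essentially bookkeeping, and the only step meriting care is checking that the topologies on both sides are the subspace topologies inherited from the corresponding powers of $\mathbf{2}$, which is precisely what the finitary hypothesis guarantees.
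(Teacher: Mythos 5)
Your argument is correct and is precisely the routine verification the paper leaves to the reader (no proof is given in the text): well-definedness via $\phi_{X_i}(Y\cap X_i)\subseteq Y\cap X_i$, order-reflection from the covering $X=\bigcup_i X_i$, and continuity coordinatewise from the projections $\mathbf{2}^X\to\mathbf{2}^{X_i}$. One small remark: that continuity argument works whether or not $\phi$ is finitary, since $\rho$ is just the restriction of a continuous map on $\mathbf{2}^X$; the finitary hypothesis is not what makes the topologies subspace topologies (that is a matter of convention), but rather what makes $Cl(X,\phi)$ a closed, hence compact, subspace of $\mathbf{2}^X$, which is needed where the lemma is later applied.
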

 
 As we  mentioned in the introduction, a \emph{convex geometry} is a pair $(X,\phi)$, where $\phi(\emptyset)=\emptyset$ and $\phi$ satisfies the anti-exchange axiom. A lattice which is isomorphic to the lattice $Cl(X,\phi)$ of a convex geometry will be called a \emph{convexity lattice} (this is a bit different from the convexity lattice of a poset introduced in \cite {birkhoff-bennett}). Apparently, there is no neat charaterization in lattice theoretical terms of convexity lattices, except for finite lattices. With  Theorem \ref{thm:algebraicconvexity} we propose one for   algebraic lattices. 

We recall that the \emph{order-dimension} of a poset $P$, denoted by $dim(P)$,  is  the least cardinal $\lambda$ for which there exist chains $C_i$, $i <\lambda$, such that $P$ is embeddable into the direct product $\Pi_{i<\kappa} C_i$. Alternatively, $dim (P)$ is the least cardinal $\kappa$ such that the order on $P$ is the intersection of $\kappa$ linear orders. There is an important literature about poset dimension, e.g. \cite{trotter} 
We just recall that if $E$ is a set of cardinality $\kappa$ then $dim (\powerset (E))= \kappa$ (H. Komm, see \cite{trotter}) and  if $\kappa$  is infinite, $dim(\powerset^{<\omega}(E))= log_2(log_2(\kappa))$ \cite{kierstead-milner}, where $log_2(\mu)$ is the least cardinal $\nu$ such that $\mu \leq 2^{\nu}$. 
We call a poset $(P,\leq)$ \emph{order-scattered} if it does not have as a sub-poset a chain isomorphic to the chain $\mathbb{Q}$ of rational numbers. We will also refer to the chains isomorphic to  $\mathbb{Q}$ as chains of order-type $\eta$.
Chains isomorphic to the chain of natural numbers $\mathbb{N}$ have order-type $\omega$.
\\
\section{Weakly atomic convex geometries}\label{weakly}

In this section  we prove that weakly atomic convexity lattices are spatial. The result will apply in the next section to produce an algebraic distributive lattice which is not a convex geometry. 

\begin{thm}\label{Weakly}
 A weakly atomic convexity lattice $L$ is spatial. 
In particular, one can choose $Y \subseteq L$, define an anti-exchange operator $\psi$ on $Y$ in such a way that $L$ is isomorphic to $Cl(Y, \psi)$ and  $\psi(y)$ is completely join-irreducible in $Cl(Y,\psi)$, for every $y \in Y$. 
\end{thm}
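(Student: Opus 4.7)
The plan is to first establish that $L$ is spatial, then realise $L$ via its completely join-irreducible elements. Fix a representation $L \cong Cl(X,\phi)$ with $\phi$ finitary and satisfying anti-exchange, which exists since $L$ is a convexity lattice. For spatiality, let $a \in L$ and set $a^{\flat} := \bigvee\{j \in \ji(L) : j \leq a\}$; suppose for contradiction that $a^{\flat} < a$. Weak atomicity supplies a cover $c \prec d$ inside $[a^{\flat}, a]$. A direct application of anti-exchange shows that $d \setminus c$ is a singleton $\{x\}$: the cover relation forces $d = \phi(c\cup\{x\}) = \phi(c\cup\{y\})$ for any $x,y \in d\setminus c$, and anti-exchange then yields $x \notin \phi(c\cup\{y\}) = d$ as soon as $x \neq y$.

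Now define
\[
j := \bigcap\{B \in Cl(X,\phi) : x \in B,\ B \subseteq d\},
\]
the minimal closed subset of $d$ containing $x$. The key claim, and the main obstacle of the proof, is that $x$ is extreme in $j$: if not, finitarity of $\phi$ produces a finite $F \subseteq j \setminus \{x\}$ with $x \in \phi(F)$, so $\phi(F)$ is a closed subset of $d$ containing $x$ and hence, by minimality of $j$, equals $j$. But $F \subseteq j \setminus \{x\} \subseteq d \setminus \{x\} = c$ forces $j = \phi(F) \subseteq c$, contradicting $x \in j \setminus c$. Once extremeness is established, minimality of $j$ makes $\phi(j \setminus \{x\})$ the unique lower cover of $j$ (any $B < j$ must avoid $x$, and thus lies in $\phi(j \setminus \{x\})$), so $j \in \ji(L)$. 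Then $j \leq d \leq a$ gives $j \leq a^{\flat} \leq c$, again contradicting $x \in j \setminus c$; hence $a = a^{\flat}$ and $L$ is spatial.

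With spatiality in hand, set $Y := \ji(L)$ and $\psi(Z) := \{y \in Y : y \leq \bigvee Z\}$ for $Z \subseteq Y$. Routine checks show that $\psi$ is a finitary closure operator and that the map $a \mapsto \{y \in Y : y \leq a\}$ is a lattice isomorphism $L \to Cl(Y,\psi)$, with spatiality supplying injectivity. For each $y \in Y$, the closed set $\psi(y) = \{y' \in Y : y' \leq y\}$ is completely join-irreducible in $Cl(Y,\psi)$ with lower cover $\{y' \in Y : y' \leq y_*\}$: any strict closed subset of $\psi(y)$ omits $y$, so its join in $L$ is $< y$ and hence $\leq y_*$. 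Finally, the anti-exchange axiom for $\psi$ is obtained by transferring the axiom of $\phi$ along the canonical singleton assignment $y \mapsto x_y$ with $\{x_y\} = y \setminus y_*$ (a singleton by exactly the same anti-exchange argument used above to show $d \setminus c$ is a singleton). The pivotal step of the whole proof is the extremeness of $x$ in $j$: that is precisely where finitarity and the anti-exchange axiom combine to promote a bare cover relation to a completely join-irreducible element, the feature that separates convex geometries from arbitrary algebraic lattices.
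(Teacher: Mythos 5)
Your route to spatiality is essentially the paper's own (a cover $c\prec d$ adds a single point $x$ by anti-exchange; the smallest closed set containing $x$, which is just $\phi(\{x\})$, is completely join-irreducible; weak atomicity then forces $a=a^{\flat}$), and realizing $L$ on $Y:=\ji(L)$ with $\psi(Z):=\{y\in Y: y\leq \bigvee Z\}$ is a legitimate variant of the paper's second half, which instead restricts $\phi$ to the points $y\in X$ with $\phi(\{y\})\in\ji(L)$. However, there is one genuine flaw: your opening sentence, ``fix a representation $L\cong Cl(X,\phi)$ with $\phi$ finitary \dots, which exists since $L$ is a convexity lattice.'' A convexity lattice is only assumed to be the closed-set lattice of \emph{some} convex geometry, and convex geometries are not assumed algebraic here; a weakly atomic (indeed order-scattered) convexity lattice need not admit any finitary representation. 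The geometry $P_{<\infty}$ of Example \ref{not top scat} (finite subsets of a countable set together with the whole set) is order-scattered, hence weakly atomic, but its lattice has no nonzero compact elements, so it is not $Cl(X',\phi')$ for any finitary $\phi'$. Since Corollary \ref{nice}(2) applies the theorem precisely to such non-algebraic, order-scattered cases, a proof resting on finitarity misses the intended scope. The same slip recurs at the end: the ``routine check'' that $\psi$ is finitary is false in general (in the $P_{<\infty}$ example, $Y$ is the set of singletons and $\psi$ of any infinite $Z\subsetneq Y$ is all of $Y$, while every finite subset of $Y$ is $\psi$-closed); fortunately finitarity of $\psi$ is not part of the statement, so that claim can simply be deleted.

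The good news is that finitarity is not needed at the step you call pivotal, so the proof is repairable on the spot: since $j\subseteq d$ and $d\setminus c=\{x\}$, you have $j\setminus\{x\}\subseteq c$, hence $\phi(j\setminus\{x\})\subseteq\phi(c)=c$, and $x\notin c$ already gives $x\notin\phi(j\setminus\{x\})$ --- no finite $F$ is required (this is exactly the paper's Claim \ref{claim2}, which observes that $\phi(\{x\})\setminus\{x\}=\phi(\{x\})\cap c$ is closed and is the unique lower cover). Likewise, the transfer of anti-exchange to $\psi$ via the singleton $\{x_y\}=y\setminus y_*$ and $y=\phi(\{x_y\})$ works without any algebraicity; it is the argument in the forward direction of Theorem \ref{thm:algebraicconvexity} and should be spelled out rather than asserted. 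With those repairs your proof is correct; but as written it rests on an unjustified assumption, and attributing the key step to ``finitarity and anti-exchange combined'' misidentifies what drives it --- the anti-exchange axiom alone does the work.
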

\begin{proof} We proceed with the following sequence of claims.  The first two hold  in every convexity lattice.
Let  $L:=Cl(X, \varphi)$ where  $(X, \varphi)$ is a convex geometry.

\begin {claim} \label{claim1} If $c \prec d$ in $L$, then $c=X_1$ and $d=X_1 \cup \{x\}$, for some $X_1 \subseteq X$, $x \not \in X_1$.
\end{claim}
Indeed, let $c=X_1=\phi(X_1)\prec d=X_2=\phi(X_2)$. Pick any $x \in X_2\setminus X_1$. Then $X_2=\phi(X_1 \cup \{x\})$. If there is another $y \in X_2\setminus X_1$, $y\not = x$, then $y \in \phi(X_1\cup \{x\})$ implies $x \not \in \phi(X_1\cup \{y\})$. Hence $X_1 < \phi(X_1\cup \{y\}) < \phi(X_1\cup \{x\})=X_2$, a contradiction to $X_1\prec X_2$.

\begin{claim} \label{claim2} Let  $X_1 \prec X_2:=X_1\cup \{x\}$ be  a covering in $L$, then $\phi(\{x\}) \in \ji(L)$.
\end{claim}
Let  $Y:=\phi(\{x\})\cap X_1$. Then $Y=\phi(\{x\})\setminus\{x\}$.  Since it is an intersection of two $\phi$-closed sets,  $Y$ is $\phi$-closed, and $Y\prec \phi(\{x\})$.
If $Z$  is any element of $L$ strictly below $\phi(\{x\})$, then $x \not \in Z$, hence, $Z \leq Y$. This proves that $\phi(x) \in \ji(L)$. 
\begin{claim}\label{claim3} $L$ is spatial.
\end{claim}
Let 
 $Z\in L$.  Set $Z^*:=\bigvee \{X \in\ji(L): X \leq Z\}$. Clearly,  $Z^*\leq Z$. If $Z^* < Z$, then since $L$ is weakly atomic  the interval $[Z^*,Z]$  contains   a cover  $X_1\prec X_2$.  Due to Claim \ref{claim1}, $X_2:=X_1\cup \{x\}$ with $x\not \in X_1$ and, according to Claim \ref{claim2},  $\phi(\{x\}) \in \ji(L)$.  Since  $x\in  Z\setminus  Z^*$ we have $\phi(\{x\})\leq Z$ and $\phi(\{x\})\not\leq Z^*$ contradicting the definition of $Z^*$. Hence, $Z^*=Z$.\\

\noindent Let  $Y:=\{ y\in X: \phi(\{y\}) \in \ji(L)\}$.
Note that $\phi(y)\setminus \{y\}$ is $\phi$-closed, for all $y \in Y$, in particular, $\phi(y_1)\not = \phi(y_2)$, when $y_1\not = y_2$, $y_1,y_2\in Y$.

Let $\psi$ be the closure operator on $Y$ defined for every $Z\subseteq Y$ by setting $\psi(Z):=\phi(Z)\cap Y$. Then $\psi$ satisfies the anti-exchange axiom. Besides, $L$ is isomorphic to $Cl(Y,\psi)$ via the mapping $\rho: A \mapsto A \cap Y$, where $A \in Cl(X,\phi)$. Indeed, $\rho$ is trivially surjective and,  since  $A=\phi(A\cap Y)$ whenever $A=\phi (A)$, a fact which follows from Claim \ref{claim3}, it is one to one. 
\end{proof}

\begin{cor}\label{nice}
In any of the following cases, the convexity lattice $L$ is spatial:
\begin{itemize}
\item[(1)] $L$ is algebraic.

\item[(2)] $L$ is order-scattered.
\end{itemize}
\end{cor}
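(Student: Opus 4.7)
The plan is to reduce both cases to Theorem \ref{Weakly} by showing that in either hypothesis the lattice $L$ is weakly atomic, and then invoke that theorem directly.

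For case (1), I would prove the classical fact that every algebraic lattice is weakly atomic. Given any interval $[a,b]$ with $a<b$, since $b$ is a join of compact elements there exists a compact $k\leq b$ with $k\not\leq a$. The set $S:=\{x\in[a,b]:k\not\leq x\}$ is nonempty (it contains $a$) and closed under suprema of chains: if $s=\bigvee_\alpha x_\alpha$ and $k\leq s$, compactness of $k$ gives $k\leq x_{\alpha_1}\vee\cdots\vee x_{\alpha_n}=x_\alpha$ for some $\alpha$, contradicting $x_\alpha\in S$. By Zorn's lemma $S$ has a maximal element $m$, and then $m\prec m\vee k$ in $[a,b]$: any $y$ with $m<y\leq m\vee k$ satisfies $y\notin S$, so $k\leq y$ and hence $y=m\vee k$. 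Thus $[a,b]$ contains a cover, so $L$ is weakly atomic and Theorem \ref{Weakly} applies.

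For case (2), I would argue contrapositively: if the convexity lattice $L$ is not weakly atomic, then some interval $[a,b]$ with $a<b$ contains no cover, so for every pair $x<y$ in $[a,b]$ there exists $z$ with $x<z<y$. Using this density I would construct an embedding of the chain of dyadic rationals into $[a,b]$ by recursion on the denominator: pick $c_{1/2}$ strictly between $a$ and $b$; having chosen $c_{k/2^n}$ for $0<k<2^n$ respecting the order, insert $c_{(2k+1)/2^{n+1}}$ strictly between $c_{k/2^n}$ and $c_{(k+1)/2^n}$. Since the dyadic rationals in $(0,1)$ are order-isomorphic to $\mathbb{Q}$, this produces a sub-chain of $L$ isomorphic to $\eta$, contradicting order-scatteredness. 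Hence $L$ is weakly atomic and Theorem \ref{Weakly} again applies.

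Neither step is especially hard; the mild subtlety is simply recognising that the covering condition needed by Theorem \ref{Weakly} is automatic in both regimes. The algebraic case uses compactness in a standard Zorn argument, while the scattered case uses the folklore observation that a dense strict order on a two-point interval forces a copy of $\mathbb{Q}$ inside the lattice.
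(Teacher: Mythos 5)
Your proposal is correct and follows exactly the paper's route: reduce both cases to Theorem \ref{Weakly} by verifying weak atomicity. The only difference is that you spell out the two auxiliary facts (the Zorn/compactness argument for algebraic lattices and the dyadic-chain construction showing a cover-free interval embeds $\mathbb{Q}$), whereas the paper simply cites the first as standard (\cite{Comp}) and states the second without proof; both of your arguments are sound.
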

\begin{proof}
Every  algebraic lattice is weakly atomic   (see, for example \cite{Comp}).  Every   scattered poset is weakly atomic.\end{proof} 

We observe that special type of weakly atomic convex geometries were distinguished in \cite{AdaGorTum} as \emph{the strong convex geometries}: in addition, the latter are atomistic and dually spatial. According to Corollary \ref{nice} (1), all algebraic convex geometries enjoy \emph{the geometric description}, which is equivalent notion of being algebraic and spatial \cite{SW11}.

\begin{thm}\label{thm:algebraicconvexity}An algebraic lattice $L$ is a convexity lattice iff it is spatial  and for every $y\in L$, $u,v \in \ji_{\nabla}(L)$:

\begin{equation}
y<y\vee u=y\vee v \; \text{implies}\;  u=v. 
\end{equation} 
\end{thm}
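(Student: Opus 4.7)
The plan is to prove the two implications separately, translating in each case between the anti-exchange axiom of a convex geometry and the covering condition on $L$ by way of the completely join-irreducible elements.

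\smallskip

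\textit{Necessity.} Suppose $L$ is an algebraic convexity lattice. Every algebraic lattice is weakly atomic, so Corollary \ref{nice}(1) yields that $L$ is spatial. For the covering condition, I would invoke Theorem \ref{Weakly} to fix a representation $L \cong Cl(Y,\psi)$ where $\psi$ is an anti-exchange operator and $\psi(\{y\}) \in \ji(L)$ for every $y \in Y$. Using spatiality together with the standard fact that a completely join-irreducible element belongs to every family of join-irreducibles whose join equals it, each $w \in \ji(L)$ has the form $\psi(\{a\})$ for a unique $a \in Y$. Now if $y < y \vee u = y \vee v$ with $u,v \in \ji(L)$, write $y$ as a $\psi$-closed set $A$ and $u = \psi(\{a\})$, $v = \psi(\{b\})$; the hypotheses translate into $a \notin A$ and $a \in \psi(A \cup \{b\})$. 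Were $a \neq b$, the anti-exchange axiom would give $b \notin \psi(A \cup \{a\})$, contradicting $b \in \psi(A \cup \{b\}) = \psi(A \cup \{a\})$. Hence $u = v$.

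\smallskip

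\textit{Sufficiency.} Assume $L$ is algebraic, spatial, and satisfies the covering condition. Set $X := \ji(L)$ and define $\phi : 2^X \to 2^X$ by $\phi(A) := \{x \in X : x \leq \bigvee A\}$. Routine verifications give that $\phi$ is a closure operator with $\phi(\emptyset) = \emptyset$ (since $0 \notin \ji(L)$), and spatiality ensures that $y \mapsto \{x \in X : x \leq y\}$ is an order-isomorphism from $L$ onto $Cl(X,\phi)$. The only substantive step is the anti-exchange axiom: given a closed $A$ and distinct $u,v \in X$ with $u \in \phi(A \cup \{v\})$ and $u \notin A$, let $y := \bigvee A$, so $u \leq y \vee v$ and $u \not\leq y$. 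If in addition $v \in \phi(A \cup \{u\})$, i.e., $v \leq y \vee u$, then the reverse inequality $y \vee u \leq y \vee v$ yields $y \vee u = y \vee v$ with $y < y \vee u$; the covering hypothesis then forces $u = v$, a contradiction. Thus $(X,\phi)$ is a convex geometry whose closed-set lattice is isomorphic to $L$.

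\smallskip

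\textit{Main obstacle.} Once the correspondence $w \leftrightarrow \{a\}$ between completely join-irreducibles of $L$ and singleton closures of the ambient convex geometry is set up, anti-exchange and the covering condition are essentially contrapositives of each other. The subtlety lies in the forward direction: one must use Theorem \ref{Weakly} to produce a representation in which every element of $\ji(L)$ genuinely arises as $\psi(\{a\})$ for some $a \in Y$, which hinges on spatiality and the lower-cover structure of completely join-irreducible elements. The backward direction is then a direct structural construction on $\ji(L)$.
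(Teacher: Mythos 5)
Your proof is correct and follows essentially the same route as the paper: necessity by translating the covering condition into the anti-exchange axiom via the identification of completely join-irreducible elements with closures of singletons, and sufficiency by the standard construction $\phi(A):=\{x\in\ji_{\nabla}(L): x\leq\bigvee A\}$ on $X:=\ji_{\nabla}(L)$. The only cosmetic difference is that you justify the identification of elements of $\ji_{\nabla}(L)$ with singleton closures through the representation of Theorem \ref{Weakly}, whereas the paper asserts it directly for the given geometry (it also follows immediately: any $x\in u\setminus u_*$ satisfies $\phi(\{x\})=u$, since $\phi(\{x\})<u$ would force $\phi(\{x\})\leq u_*$).
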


\begin{proof}
Suppose $L$ is a convexity lattice, i.e. $L\cong Cl (X,\phi)$ for convex geometry $(X,\phi)$. Due to algebraicity, it is weakly atomic, thus, Theorem \ref{Weakly} can be applied to conclude that $L$ is spatial. Every $y\in L$ represents closed set $Y=\phi(Y)\subseteq X$. Elements $u,v \in \ji(L)$ are represented by $\phi(x_u)$ and $\phi(x_v)$, for some $x_u,x_v\in X$. The lattice equality $y\vee u=y\vee v$ is now translated to $x_u\in \phi(Y\cup\{x_v\})$ and $x_v\in \phi(Y\cup\{x_u\})$. According to anti-exchange axiom, we must have $x_u=x_v$, or $u=v$.

Vice versa, suppose $L$ is spatial, for which the equality from the statement of Theorem holds. Denote $X:=\ji(L)$ and define closure operator on $X$ by setting  $\phi (Y) = [0,\bigvee Y]\cap X$, for all $Y\subseteq X$. Then $L\cong Cl(X,\phi)$. Moreover, the anti-exchange axiom holds for $\phi$. Indeed, take any $y\in L$, then by isomorphism it corresponds to $\phi$-closed sets $Y\subseteq X$. Consider $u,v \in X$ such that $u,v\not \leq y$. Then $v\leq  y\vee u$ means that $v \in \phi(Y\cup \{u\})$. Either we assume that $u\not \leq y\vee v$ and then the anti-exchange axiom holds, or we get $y<y\vee u=y\vee v$, which implies $u=v$.
Thus, the anti-exchange axiom holds every time we assume $u\not = v$.
\end{proof}

Another example of weakly atomic convex geometry was presented in \cite{A07}. Since it was given in the form of \emph{antimatroid}, i.e. the structure of open sets of convex geometry, we will provide the corresponding definition of \emph{super solvable} convex geometry here.

\begin{df} A convex geometry $C:=(X,\phi)$ is called \emph{super solvable}, if there exists well-ordering $\leq_X$ on $X$ such that, for all $A,B\in Cl(C)$, if $A \not\subseteq B$, then $A\setminus \{a\}$ is $\phi$-closed, where $a:=\min_{\leq_X}(A\setminus B)$  
\end{df}

We note that the corresponding definition of super solvable antimatroid in \cite{A07} is more restrictive in the sense that $X$ is finite. Super solvable antimatroids with such definition appear as the structure associated with special ordering of elements in Coxeter groups. 
%
%
\begin{cor}
If a convex geometry $C:=(X,\phi)$ is super solvable then $Cl(C)$ is spatial.
\end{cor}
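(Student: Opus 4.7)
The plan is to invoke Theorem \ref{Weakly}: once we establish that $Cl(C)$ is weakly atomic, spatiality follows immediately. So the entire problem reduces to producing, inside every proper interval $[B,A]$ of $Cl(C)$, a cover.

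Fix closed sets $B\subsetneq A$. Since $A\not\subseteq B$, the well-ordering $\leq_X$ supplied by super solvability lets me set $a:=\min_{\leq_X}(A\setminus B)$, and the definition of super solvability then guarantees that $A\setminus\{a\}$ is $\phi$-closed. Because $a\in A\setminus B$, in particular $a\notin B$, so $B\subseteq A\setminus\{a\}\subsetneq A$; thus $A\setminus\{a\}$ already sits inside $[B,A]$. Next I check that $A\setminus\{a\}\prec A$ in the lattice: any closed $D$ with $A\setminus\{a\}\subseteq D\subseteq A$ satisfies $D\setminus(A\setminus\{a\})\subseteq\{a\}$, so set-theoretically $D$ must equal either $A\setminus\{a\}$ or $A$. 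Hence $[B,A]$ contains the cover $A\setminus\{a\}\prec A$, and $Cl(C)$ is weakly atomic.

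Applying Theorem \ref{Weakly} then delivers spatiality of $Cl(C)$, which is exactly what the corollary asserts. The whole argument mirrors the proof of Corollary \ref{nice}: the role played there by "algebraic $\Rightarrow$ weakly atomic" or "order-scattered $\Rightarrow$ weakly atomic" is taken over here by the super solvability hypothesis, which manufactures the required lower cover of $A$ directly and canonically via the well-ordering. I do not foresee any real obstacle; the only points to verify are that the chosen witness $A\setminus\{a\}$ sits above $B$ and is properly covered by $A$, and both follow at once from the choice of $a$ as the $\leq_X$-minimum of $A\setminus B$.
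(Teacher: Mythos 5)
Your proof is correct and follows essentially the same route as the paper: the paper also derives spatiality by noting that super solvability makes every interval of $Cl(C)$ (strongly co-atomic, hence) weakly atomic and then invoking Theorem \ref{Weakly}. You merely spell out the cover $A\setminus\{a\}\prec A$ explicitly, which the paper leaves implicit.
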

Indeed, the property of super solvable convex geometry guarantees that every interval $[D,A]$ in the lattice of closed sets is strongly co-atomic, i.e., for every $B \in [D,A], B\not = A$, there exists $A'\prec A$ such that $B\leq A'$.
In particular, $C$ is weakly atomic, thus, it is spatial.

An example of a finite super solvable convex geometry is given also by the lattice of $\wedge$-subsemilattices $Sub_\wedge(P)$ of a finite (semi)lattice $P$. This follows from result in \cite{HR93}, where it was established for more general (and dual) lattices of closure operators on finite partially ordered sets. We will deal with infinite lattices $Sub_\wedge(P)$ in section \ref{finite}. 

\section{Distributive lattices and convex geometries}\label{dist}

We call a topological space $Y$ \emph{scattered}, if every non-empty subset $S$ of $Y$ has an isolated point, i.e. there exists $y\in S$ and an open set $U$ of $Y$ such that $\{y\}=S\cap U$. 

The following connection was established between topological and order characteristic of lattices in the distributive case.

\begin{thm}\cite{Mis}\label{Mis}
A distributive algebraic lattice is topologically scattered iff it is order-scattered.
\end{thm}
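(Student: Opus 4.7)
The plan is to prove the two implications separately, in both cases working contrapositively so that the topological and order-theoretic witnesses of non-scatteredness can be constructed from one another.

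First I would handle the easier direction: topologically scattered implies order-scattered. Assuming $L$ contains a chain $C$ of order type $\eta$ (i.e., isomorphic to $\mathbb{Q}$), I would consider the closure $\overline{C}$ inside the compact Hausdorff space $\mathbf{2}^X$, where $X$ is the semilattice of compact elements of $L$. The density-in-itself of $C$ together with compactness of $\overline{C}$ should force $\overline{C}$ to have no isolated points: any basic neighborhood of a point of $\overline{C}$ is determined by finitely many coordinates, i.e.\ by finitely many compact elements being present or absent, and because $C$ is order-dense and order-dense chains accumulate at each of their points, every such neighborhood meets $\overline{C}$ infinitely often. Thus $\overline{C}$ is a non-empty perfect subspace of $L$, contradicting topological scatteredness.

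For the converse, assume $L$ contains a non-empty perfect subspace $S$ and build a copy of $\mathbb{Q}$ inside $L$, crucially using distributivity. I would carry out a standard Cantor-scheme construction: build a binary tree $(U_\sigma)_{\sigma \in 2^{<\omega}}$ of basic clopen neighborhoods, each meeting $S$, with $U_{\sigma 0}$ and $U_{\sigma 1}$ disjoint and both meeting $S$; possible because every point of $S$ is a topological limit point of $S$. Each $U_\sigma$ is determined by finite sets of compact elements that must lie below the element or must not lie below it, so one may pick witnesses $y_\sigma \in S \cap U_\sigma$ with controlled behavior on a growing finite set of compact elements. Identifying the dyadic rationals in $[0,1]$ with the appropriate branches of $2^{<\omega}$, I would assemble from the witnesses $y_\sigma$ a strictly increasing chain $(z_q)_{q \in \mathbb{Q}\cap [0,1]}$ of order type $\eta$, where $z_q$ is defined as a join of chosen compact elements indexed by the initial segment of $q$ in the tree. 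Distributivity enters in verifying that $z_q < z_{q'}$ whenever $q < q'$: it guarantees that the compact elements introduced on either side of a branching point are not absorbed by the joins from the other side, so distinct dyadic rationals yield distinct comparable ideals.

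The main obstacle is the last step, namely translating the purely topological Cantor scheme into an order-theoretic $\mathbb{Q}$-chain. In an arbitrary algebraic lattice, two elements may be topologically far apart in $\mathbf{2}^X$ while being order-incomparable, so the Cantor scheme alone does not produce a chain; one really needs distributivity to ensure that the joins defining $z_q$ and $z_{q'}$ can be compared via their generating compact elements. An alternative to a direct construction would be to pass through the dual side: the algebraic distributive lattice $L$ is the ideal lattice of the distributive join-semilattice $X$, and one can translate topological scatteredness of $L$ into scatteredness of an associated spectral space of prime ideals, then invoke the known equivalence between order-scatteredness of a distributive lattice and scatteredness of its prime spectrum. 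Either route should work, but the direct tree construction is more self-contained.
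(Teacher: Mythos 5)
You should first note that the paper offers no proof of Theorem \ref{Mis} to compare with: it is quoted from Mislove \cite{Mis}, and the only piece of it the paper reproves is the easy implication, in the general form of Corollary \ref{top->order}. For that easy implication (topologically scattered implies order-scattered) your argument contains a false step: the topological closure of a chain of order type $\eta$ need not be perfect, because an order-dense chain of ideals need not accumulate topologically at its points. Concretely, let $T$ be the chain obtained by inserting two new elements $p<q$ between the chains $\mathbb{Q}\cap(0,1)$ and $\mathbb{Q}\cap(1,2)$, and let $L:=I(T)$, a distributive algebraic (indeed complete chain) lattice whose nonzero compact elements are the principal initial segments $\downarrow t$. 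The chain $C:=\{\downarrow t: t\in T,\ t\neq q\}$ has order type $\eta$, yet the basic clopen set of those initial segments containing $p$ but not $q$ equals $\{\downarrow p\}$ (any initial segment properly containing $\downarrow p$ contains some element above $p$, hence contains $q$), so $\downarrow p$ is isolated in $L$, in particular in $\overline{C}$. The conclusion itself is safe: one should instead observe that $\overline{C}$ is a chain (incomparability of two limit points would produce incomparable members of $C$), compact, hence a complete chain containing a copy of $\eta$, and conclude via Proposition \ref{lem:scattered chain}; this is exactly Corollary \ref{top->order}.

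The genuine gap is in the converse, which is where all the content of Mislove's theorem lies, and your sketch stops at the decisive point. The Cantor scheme gives witnesses $y_\sigma\in S$ about which you control only the membership of finitely many compacts in $y_\sigma$ itself; the elements $z_q$ you then manufacture as joins of selected compacts are new elements of $L$, and nothing makes them pairwise comparable: for dyadic $q<q'$ the associated sets of tree nodes are not nested, so $z_q\leq z_{q'}$ is not automatic, while if you force the generating families to be nested you still must prove strictness, i.e.\ that some compact attached to $q'$ is not below the join $z_q$ --- information the purely topological scheme does not provide. The sentence asserting that distributivity prevents the new compacts from being ``absorbed'' is precisely the claim that requires proof, and none is given; since the implication ``order-scattered implies topologically scattered'' is exactly what the paper leaves open for general algebraic lattices (Problem \ref{problem}) and establishes only for special classes (Theorems \ref{CP-like} and \ref{scat-variety}), distributivity must enter in a concrete, verifiable way, for instance through completely meet-irreducible or prime elements, or the spectral description Mislove actually uses. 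Your fallback route --- invoking ``the known equivalence between order-scatteredness of a distributive lattice and scatteredness of its prime spectrum'' --- is essentially a restatement of the theorem to be proved, hence circular as written. So the proposal is a repairable plan for the easy half and only a statement of intent for the hard half.
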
 

In this section we collect several observations concerning the following

\begin{pr}\label{problem}
Is it true that every algebraic convex geometry that is order-scattered will also be topologically scattered?
\end{pr}

We want to emphasize that the term of ``topologically scattered"  algebraic lattice $L$, in Mislove's result and Problem \ref{problem}, assumes the product topology induced on $L$ from $2^X$, where $X$ is the set of compacts of $L$.

Our first observation is that any solution to Problem \ref{problem} can not be a generalization of Mislove's result. For this, we just need to give an example of an algebraic distributive lattice that cannot be the lattice of closed sets of any convex geometry.


\begin{exm}\label{not conv geo}
\end{exm}
Consider the set $L^*$ of cofinite subsets of  a countable set $X$, with the empty set added. Ordered by inclusion,  $L^*$ is a complete lattice. Clearly it is distributive and algebraic, in fact, every element of $L^*$ is compact. Besides, it is order-scattered, thus, weakly atomic.
On the other hand, there is no completely join-irreducible elements, hence, $L^*$ cannot be a convex geometry, due to Corollary \ref{nice}(2).\\

Our next observation is about multiple possibilities to define the product topology on the same lattice. 
Every time there is an embedding  of a complete lattice $L$ into $\mathbf{2}^X$, for some set $X$, we may think of $L$ as a topological space, whose topology inherits the product topology of $\mathbf{2}^X$. Presumably, there are different ways of such representations of $L$. 

When we have  a convex geometry $C:=(X,\phi)$ on set $X$, we have a natural embedding of $C$ into $\mathbf{2}^X$. Thus, saying about topological scatteredness of a convex geometry, we may assume the topology inherited from $\mathbf{2}^X$. 
Not every order-scattered convex geometry is topologically scattered with respect to this embedding, even when its convexity lattice is distributive.

\begin{exm}\label{not top scat}
\end{exm}
Let $P_{<\infty}$ be the convex geometry defined on a countable set $X$, whose closed sets are all finite subsets of $X$ and $X$ itself. Evidently, the lattice of closed sets is distributive, but not algebraic (this convex geometry does not have compact elements at all). On the other hand, it is dually algebraic (in fact the dual lattice is isomorphic to the lattice $L^*$ of Example \ref {not conv geo}). It is order-scattered, but  not topologically scattered. Indeed,  let  $S:=P_{<\infty}\setminus\{X\}$. Then, 
every non-empty open set  $U$ of $\powerset(X)$ has more than one point of intersection with $S$. Indeed, 
 there are finite $X_1,X_2 \subseteq X$ such that $U$ contains $\{Y\subseteq X : X_1 \subseteq Y,X_2 \subseteq X \setminus Y\}$. Then,  every finite $Z\subseteq X$ that has $X_1$ as a subset, and avoids $X_2$ is in $U\cap S$. 


In the next example we demonstrate that the same lattice $L$ may be topologically scattered or not topologically scattered depending on the choice of $X$ and embedding of $L$ into $2^X$. Before, we will need the following observation.

\begin{prop}\label{dual}
If lattice $L$ is topologically scattered with respect to $2^X$, then its dual lattice $L^\sigma$ is topologically scattered with respect to the same $2^X$.
\end{prop}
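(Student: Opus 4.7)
The plan is to realize $L^\sigma$ inside $2^X$ as the image of $L$ under a natural self-homeomorphism of the ambient space, namely complementation, and then transport scatteredness across that homeomorphism.

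First I would introduce the complementation map $c\colon 2^X \to 2^X$ defined by $c(Y) := X\setminus Y$. It is standard that $c$ is a homeomorphism of $2^X$ for the product topology: identifying $2^X$ with $\{0,1\}^X$, $c$ acts coordinate-wise by the self-homeomorphism $0\leftrightarrow 1$ of the discrete two-point space, and such coordinate-wise maps are continuous for the product topology; moreover $c\circ c = \mathrm{id}$, so $c$ is its own inverse, hence a homeomorphism.

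Second, I would observe that $c$ reverses inclusion: $Y_1 \subseteq Y_2$ iff $c(Y_2) \subseteq c(Y_1)$. Consequently, the set $c(L) := \{X\setminus Y : Y\in L\}$, viewed as a poset under inclusion inherited from $2^X$, is order-isomorphic (via $c$ itself) to $L^\sigma$. This is precisely the natural embedding of $L^\sigma$ into $2^X$ induced by the original embedding $L\hookrightarrow 2^X$.

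Finally, since $c$ is a homeomorphism of $2^X$, its restriction yields a homeomorphism from $L$ (with the subspace topology from $2^X$) onto $c(L)$ (with the subspace topology from $2^X$). Topological scatteredness is a homeomorphism invariant: if $S \subseteq c(L)$ is non-empty, then $c(S)\subseteq L$ is non-empty, so by hypothesis it has an isolated point $Y_0$, witnessed by an open set $U$ of $2^X$ with $c(S)\cap U = \{Y_0\}$; applying $c$ gives $S \cap c(U) = \{c(Y_0)\}$, and $c(U)$ is open because $c$ is a homeomorphism. Hence $c(L)$ is topologically scattered, which is exactly the assertion for $L^\sigma$. There is no real obstacle here; the only substantive point is to spot that complementation is the right homeomorphism linking $L$ and $L^\sigma$ as subspaces of the same $2^X$.
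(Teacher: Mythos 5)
Your proposal is correct and is essentially the paper's own argument: the paper's proof consists precisely of the observation that complementation on $2^X$ is continuous (indeed an involutive homeomorphism), which identifies $L^\sigma$ with the complemented copy $c(L)\subseteq 2^X$ and transports scatteredness. You have simply spelled out the details the paper leaves to the reader.
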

\begin{proof}
Observe that the complement operation in $2^X$ is continuous with respect to the product topology.
\end{proof}

\begin{exm}\label{diff reps}
\end{exm}
Let $L^*$ be the  lattice of co-finite subsets of  a countable set $X$, with the empty set added to make it complete (see Example \ref{not conv geo}). Then it is dual to $P_{<\infty}$ from Example \ref{not top scat}. Hence, $L^*$ is not topologically scattered with respect to $2^X$, due to Proposition \ref{dual} and Example \ref{not top scat}. On the other hand, $L^*$ is an algebraic distributive lattice, and all its elements are compacts. Associating every element $x \in L^*$ with the ideal $[0,x] \subseteq L^*$, we have an embedding of $L^*$ into $2^{L^*}$, and with respect to this embedding, $L^*$ is topologically scattered, due to Mislove's theorem \ref{Mis}. \\

Finally, despite the fact that not every distributive algebraic lattice is a convex geometry (Example \ref{not conv geo}), one can think of it as an infinite version of \emph{antimatroid}. We recall that an antimatroid stands for the collection of open sets of a convex geometry, thus, it forms a lattice dual to the lattice of closed sets.

\begin{prop}\label{alg-dist}
If $L$ is an algebraic distributive lattice, then it is isomorphic to an antimatroid. 
\end{prop}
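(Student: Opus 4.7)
The plan is to realize $L$ as the open-set lattice of a suitable $T_0$ topological space $X$; once done, the topological closure on $X$ is a convex-geometry operator whose closed-set lattice is $L^{\sigma}$, proving that $L$ itself is an antimatroid.

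First I would show that $L$ is spatial as a frame: every element of $L$ is a meet of prime elements (where $p$ is \emph{prime} when $a\wedge b\leq p$ implies $a\leq p$ or $b\leq p$). Two ingredients enter. In any algebraic lattice, $\mi_{\Delta}(L)$ is meet-dense by a standard Zorn argument: given $a\not\leq b$, pick a compact $c\leq a$ with $c\not\leq b$ and extract a maximal element $m\geq b$ with $c\not\leq m$; such $m$ lies in $\mi_{\Delta}(L)$. In a distributive lattice, every $m\in\mi_{\Delta}(L)$ is prime: if $a,b\not\leq m$, then $a\vee m$ and $b\vee m$ both dominate the unique upper cover $m^{*}$, and finite distributivity gives $(a\wedge b)\vee m = (a\vee m)\wedge(b\vee m)\geq m^{*}$, hence $a\wedge b\not\leq m$. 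Combined, the primes are meet-dense in $L$.

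Next, I would take $X:=\mathrm{Spec}(L)$, the set of prime elements of $L$, topologized so that the basic closed sets are $V(a):=\{p\in X:a\leq p\}$ for $a\in L$. Spatiality together with the identity $\bigcup_{i} U(a_{i}) = U(\bigvee_{i} a_{i})$, where $U(a):=X\setminus V(a)$, yields an order-reversing bijection $a\mapsto V(a)$ from $L$ onto the closed-set lattice $\mathcal{C}(X)$; equivalently, $L^{\sigma}\cong\mathcal{C}(X)$. The space $X$ is $T_{0}$ because its specialization quasi-order coincides with the restriction of the partial order on $L$, hence is antisymmetric.

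Finally, I would check that, on any $T_{0}$ space, the topological closure $\tau(A):=\overline{A}$ is a convex-geometry operator. Clearly $\tau(\emptyset)=\emptyset$. For the anti-exchange axiom, let $A=\tau(A)$ be closed and $x\neq y$ with $x\in\tau(A\cup\{y\})\setminus A$. Finite additivity gives $\overline{A\cup\{y\}}=A\cup\overline{\{y\}}$, so $x\in\overline{\{y\}}$, i.e.\ $x\leq y$ in the specialization order. One also has $y\notin A$ (otherwise $\overline{\{y\}}\subseteq A$ would force $x\in A$), and the $T_{0}$ axiom together with $x\neq y$ and $x\leq y$ rules out $y\leq x$, i.e.\ $y\notin\overline{\{x\}}$, whence $y\notin\tau(A\cup\{x\})$. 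Therefore $(X,\tau)$ is a convex geometry with $Cl(X,\tau)=\mathcal{C}(X)\cong L^{\sigma}$, so $L^{\sigma}$ is a convexity lattice and $L$ is an antimatroid. I expect the spatiality step to be the main technical point -- essentially Stone's prime ideal theorem for the coherent frame $L$ -- while the topological construction and the verification of anti-exchange are routine once $\mathrm{Spec}(L)$ is in hand.
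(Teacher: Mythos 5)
Your proposal is correct, and at bottom it is the same dualization strategy as the paper's, but the execution differs enough to be worth comparing. The paper works directly on $M:=\mi_{\Delta}(L)$, defines the hull--kernel closure $\phi(Y):=\{m\in M: m\geq \bigwedge Y\}$, observes $Cl(M,\phi)\cong L^{\sigma}$, and checks the anti-exchange axiom by a two-line computation: if $x\geq a\wedge y$ then distributivity gives $x=(x\vee a)\wedge(x\vee y)$, and complete meet-irreducibility of $x$ forces $x>y$, so a symmetric violation is impossible. You instead pass to the prime spectrum with its Stone-type topology, identify $L^{\sigma}$ with the closed-set lattice via spatiality, and then invoke the general lemma that the closure operator of any $T_0$ space is zero-closed and anti-exchange. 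Note that distributivity enters your argument at exactly the analogous spot, only repackaged: it is what makes each completely meet-irreducible element prime, hence what makes the hull--kernel sets $V(a)$ closed under finite unions ($V(a)\cup V(b)=V(a\wedge b)$), which is precisely the finite additivity $\overline{A\cup\{y\}}=A\cup\overline{\{y\}}$ your anti-exchange check rests on. What your route buys is the pleasant general fact ($T_0$ closure operators are convex-geometry operators) and an explicit Stone-duality picture of the antimatroid as the open-set lattice of $\mathrm{Spec}(L)$; what the paper's route buys is brevity and self-containment, with no topology needed. Two small points to tidy: you should take primes to be proper ($p\neq 1$), otherwise $\emptyset$ is a closed set not of the form $V(a)$ and the bijection $a\mapsto V(a)$ onto the closed-set lattice fails (with proper primes $\emptyset=V(1)$, and zero-closedness is automatic); and since your meet-density of primes is derived from the meet-density of $\mi_{\Delta}(L)$ together with their primality, you could equally well run your topological argument on $\mi_{\Delta}(L)$ itself, which makes the kinship with the paper's proof transparent.
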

\begin{proof}
It is equivalent to show that $L$ is dually isomorphic to the lattice of closed sets of a convex geometry.
As every algebraic lattice,  $L$ has enough completely meet-irreducible elements, that is every element of $L$ is a meet of elements from $M:= \mi_{\Delta} (L)$,  the set of completely meet-irreducible elements of $L$ (see for example \cite{Comp}). Define a closure on $M$ as follows: for any $Y \subseteq M$, set $\phi(Y):=\{m \in M:  m \geq \bigwedge Y\}$. Then $\phi$ is a closure operator on $M$, and the lattice $Cl(M, \phi)$ of closed sets of $\phi$ is isomorphic to $L^\sigma$.
We claim that $\phi$ is anti-exchange. Indeed,  let $A\in Cl(M, \phi)$, $x \in M\setminus A$ and  $x \not = y$ such that $x \in \phi(A \cup \{y\})$. Then $x \geq a\wedge y$, that is  $x=x\vee (a \wedge y)$, where $a =\bigwedge A$. Due to distributivity, $x=(x\vee a)\wedge (x\vee y)$. Thus, $x=x\vee a$ or $x=x\vee y$, since $x$ is meet-irreducible. But since $x\not \in A$,  $x \not \geq a$, hence, $x> y$. Since $x\not \in A$, $y\not \in A$. If   $y \in \phi(A \cup \{x\})$ then by the same token we obtain that $y>x$ which is impossible. Hence,  $\phi$ satisfies the anti-exchange axiom.
\end{proof}

We now turn back to Problem \ref{problem}.
The following two sections provide the partial positive confirmations, when considering some special types of convex geometries.

\section{Relatively convex sets}
Let $V$ be a real vector space and $X\subseteq V$. Let  $Co(V,X)$ be the collection of sets $C\cap X$, where $C$ is a convex subset of $V$. Ordered by inclusion, $Co(V,X)$ is an algebraic convex geometry. Several publications are devoted to this convex geometry \cite{A04,A08,beagley, Be}.

%
%
%
%

The main goal of this section is to prove the following result. 
\begin{thm}\label{CP-like} The following properties are equivalent for a convex geometry $L:=Co(V,X)$.
\begin{enumerate}[{(i)}]

\item $L$ is topologically scattered;
\item $L$ is order-scattered;
\item The semilattice $S$ of compact elements of $L$ is order-scattered and does not have a join subsemilattice isomorphic to $\powerset ^{<\omega}(\mathbb N)$; 
\item $X$ is included into a finite union of lines and on each line $\ell$ with an orientation, the order on the points of $X$ is scattered. 

\end{enumerate}
\end{thm}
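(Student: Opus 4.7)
The plan is to run the cycle (iii) $\Rightarrow$ (iv) $\Rightarrow$ (i) $\Rightarrow$ (ii) $\Rightarrow$ (iii). The easy implication (ii) $\Rightarrow$ (iii) is immediate: since $S \subseteq L$, order-scatteredness of $L$ descends to $S$; and were $S$ to contain $\powerset^{<\omega}(\mathbb{N})$ as a join-subsemilattice, the fact recalled in Section~2 would give $\powerset(\mathbb{N})$ as a subposet of $L$, yielding a $\mathbb{Q}$-chain in $L$ and contradicting (ii).

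For (iv) $\Rightarrow$ (i), I would write $X \subseteq \ell_1 \cup \dots \cup \ell_n$ with lines $\ell_i$, set $X_i := X \cap \ell_i$, and apply Lemma~\ref{lm:embedding} to obtain a continuous order-embedding $\rho : L \to \prod_{i=1}^{n} Co(\ell_i, X_i)$. Because $L$ is closed (hence compact) in $2^X$ and the product is Hausdorff, $\rho$ is a homeomorphism onto its image. The key sub-lemma is that $Co(\ell, Y)$ is topologically scattered whenever $Y$ is a scattered set on a line $\ell$. Any uncountable subset of $\mathbb{R}$ contains a perfect set, which in turn contains a copy of $\mathbb{Q}$, so $Y$ is countable; an induction on Hausdorff rank then shows that the Dedekind completion $\hat{Y}$ is countable as well. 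Every element of $Co(\ell, Y)$ is an interval of $Y$, determined up to equality by a pair of cuts in $\hat{Y}$, so $|Co(\ell, Y)| \leq \aleph_0$. A countable compact subspace of the Polish space $2^Y$ has empty Cantor-Bendixson kernel, hence is topologically scattered. Since finite products of compact scattered spaces are scattered (isolate points one coordinate at a time) and scatteredness passes to subspaces, $L$ is topologically scattered.

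The main obstacle is (iii) $\Rightarrow$ (iv), for which two arguments are needed. First, to see that $X$ lies in finitely many lines, one proceeds by contradiction, inductively selecting $x_n \in X$ outside all $\binom{n-1}{2}$ lines through pairs from $\{x_1,\dots,x_{n-1}\}$, producing an infinite subset in general position (no three collinear). A sufficiently generic linear map $\pi : V \to \mathbb{R}^2$ avoids the countably many bad triples, so $\{\pi(x_n)\}$ is still in general position; Ramsey's theorem on $4$-tuples together with the Erd\H os--Szekeres theorem then extract an infinite $\{\pi(x_{n_k})\}$ in convex position. Because $\pi$ preserves convex hulls, $\{x_{n_k}\}$ is itself in convex position in $V$, hence an infinite independent subset of $(X, \phi)$, contradicting the absence of $\powerset^{<\omega}(\mathbb{N})$ in $S$. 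Second, each $X_i$ must be order-scattered: a $\mathbb{Q}$-ordered subset $\{y_r : r \in \mathbb{Q}\} \subseteq X_i$ would produce the compacts $\phi(\{y_{r_0}, y_r\}) = [y_{r_0}, y_r]_{\ell_i} \cap X$ for $r > r_0$, forming a $\mathbb{Q}$-chain inside $S$, contrary to its order-scatteredness.

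Finally, for (i) $\Rightarrow$ (ii), suppose $L$ carries a $\mathbb{Q}$-chain $(c_q)_{q \in \mathbb{Q}}$. Using completeness of $L$, define $c_r := \bigvee \{c_q : q \in \mathbb{Q},\, q < r\}$ for each $r \in \mathbb{R}$. Inserting rationals $q < q'$ inside $(r_1, r_2)$ shows $c_{r_1} \leq c_q < c_{q'} \leq c_{r_2}$, so $(c_r)_{r \in \mathbb{R}}$ is a strictly increasing chain in $L$. For any $r$ and any rational sequence $q_n \nearrow r$, one has $c_r = \bigcup_n c_{q_n}$ as subsets of $X$ (since directed unions of closed sets are closed in a finitary closure system), whence $c_{q_n} \to c_r$ in the product topology on $2^X$: for any finite $F \subseteq X$, the increasing sets $c_{q_n} \cap F$ stabilize at $c_r \cap F$. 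Thus every point of $\{c_r : r \in \mathbb{R}\}$ is a limit of strictly smaller points in this subset, making the subspace dense in itself and contradicting (i).
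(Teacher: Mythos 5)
Your cycle of implications does establish the theorem, and at the top level it parallels the paper: (ii)$\Rightarrow$(iii) is the same argument; (iii)$\Rightarrow$(iv) rests, as in the paper's Theorem \ref{ind}, on Ramsey's theorem for $4$-element subsets plus the Erd\H{o}s--Szekeres/Klein fact in the plane, and your interval chain $\phi(\{y_{r_0},y_r\})$ is exactly the paper's device in Proposition \ref{line}; (iv)$\Rightarrow$(i) reduces to lines via Lemma \ref{lm:embedding} just as in Corollary \ref{prod}. You genuinely diverge in two supporting steps. For the line case you replace Proposition \ref{line} (proved in the paper via $Int(C)$ being a continuous image of $I(C)\times F(C)$ together with the Cantor--Hausdorff Proposition \ref{lem:scattered chain}) by a cardinality argument: an order-scattered set on a real line is countable, so $Co(\ell,Y)$ is a countable compact subspace of $2^Y$, hence scattered. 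This is shorter but exploits the real-line setting, while the paper's route works for arbitrary chains. For the passage from a general $V$ to the plane you use one generic projection, where the paper runs an induction on the affine dimension of a minimal bad subset, projecting onto hyperplanes. Your (i)$\Rightarrow$(ii) is a direct proof of Corollary \ref{top->order} (cited as well known in the paper); it is fine up to an indexing slip: approximate $c_r$ by the elements $c_s$ with real $s<r$ of your new family, since the original $c_{q_n}$ need not belong to the set you claim is dense-in-itself.

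Two justifications need repair. First, ``any uncountable subset of $\mathbb{R}$ contains a perfect set'' is false (Bernstein sets). What you need, and what is true, is that an uncountable $Y\subseteq\mathbb{R}$ is not order-scattered: discard the countably many points of $Y$ having a countable neighbourhood in $Y$ (Lindel\"of); the rest is dense-in-itself and contains a copy of $\mathbb{Q}$. With this, countability of $Y$, of its completion, and of $Co(\ell,Y)$ goes through as you intend. Second, ``a sufficiently generic linear map $\pi:V\to\mathbb{R}^2$ avoids the countably many bad triples'' is immediate only when the span of your chosen points is finite-dimensional; in general they may span an infinite-dimensional subspace, where ``generic'' has no off-the-shelf meaning. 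The claim is still true, but it requires a recursive construction (extend $\pi$ along an increasing chain of finite-dimensional subspaces, keeping it injective on each plane spanned by a difference pair of your points that the current subspace meets), or else the paper's induction on affine dimension, which avoids the issue. Your pullback step itself is correct: since $\pi(conv(A))=conv(\pi(A))$, convex position of the projected points forces convex independence of the original ones.
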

The equivalence between $(i)$ and $(ii)$ is an analogue of Mislove's result \cite{Mis}, whereas the equivalence between $(ii)$ and $(iii)$ is an analogue of  Theorem \ref{CP1}.

First, we start from the analysis of independent subsets of $Co(V, X)$.  Set $Co(V):=Co(V, V)$, denotes by $conv(Y)$ the  closure of a subset $Y\subseteq V$ in  $Co(V)$ and   call it  the  \emph{convex hull} of $Y$. Clearly the closure induced on $X$ is the closure in $Co(V, X)$. Hence, the independent sets w.r.t. this closure are the independent sets w.r.t. the closure $conv$, that we call \emph{convexly independent sets}, which are included into $X$.

\begin{thm} \label{ind}
The following properties are equivalent for a subset $X$ of $V$.
\begin{enumerate}[{(i)}]
\item $X$ is contained in a finite union of lines;
\item $X$ contains no infinite convexly independent subset;
\item $dim(Co(V,X))$ is finite.
\end{enumerate}
\end{thm}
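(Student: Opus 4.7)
The plan is to establish the cycle $(i) \Rightarrow (iii) \Rightarrow (ii) \Rightarrow (i)$, in which the first two implications are routine and $(ii) \Rightarrow (i)$ carries the content. For $(i) \Rightarrow (iii)$: if $X \subseteq \ell_1 \cup \cdots \cup \ell_n$ with each $\ell_j$ a line, Lemma~\ref{lm:embedding} applied to the covering $\{X\cap\ell_j\}_{j=1}^{n}$ embeds $Co(V,X)$ order-preservingly into the finite product $\prod_{j=1}^{n} Co(\ell_j, X \cap \ell_j)$. Each factor is a lattice of traces on $X\cap\ell_j$ of intervals of the chain $\ell_j$, and an interval is determined by its two endpoints, so each factor has order-dimension at most $2$; hence $dim(Co(V,X)) \le 2n < \infty$. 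For $(iii) \Rightarrow (ii)$: an infinite convexly independent subset $Y \subseteq X$ would, by the general fact recalled in the preliminaries, order-embed $\powerset(Y) \cong \powerset(\N)$ into $Co(V,X)$, forcing $dim(Co(V,X)) \ge dim(\powerset(\N)) = \aleph_0$ by Komm's theorem, contradicting $(iii)$.

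For $(ii) \Rightarrow (i)$, let $n$ be the finite maximum size of a convexly independent subset of $X$. Any $k$ affinely independent points of $X$ are the vertices of a $(k-1)$-simplex and hence convexly independent, so the affine span of $X$ has dimension at most $n-1$, reducing to $X \subseteq \R^d$. I plan a two-stage argument. \emph{Stage 1:} every infinite subset $S \subseteq X$ contains an infinite collinear subset. Apply Ramsey's theorem to $(d+1)$-subsets of $S$ coloured by affine (in)dependence; an infinite ``all-independent'' monochromatic class admits a further Ramsey colouring by orientation sign whose infinite monochromatic subclass lies in convex position, which is forbidden by $(ii)$, while an infinite ``all-dependent'' monochromatic class contains no $d+1$ affinely independent points and so lies in a hyperplane. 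Iterating this dimension-reduction at most $d-1$ times yields an infinite collinear subset of $S$. In particular $X$ itself has a \emph{heavy} line, i.e.\ a line $\ell$ with $X\cap\ell$ infinite.

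\emph{Stage 2:} $X$ has only finitely many heavy lines. The crucial claim is that $k$ distinct heavy lines of $X$ produce $2k$ convexly independent points of $X$, which then bounds the number of heavy lines by $n/2$. I would prove this by induction on $k$: given $2k$ convexly independent points forming a convex polygon with two vertices on each of $\ell_1,\dots,\ell_k$, add two new points on a fresh heavy line $\ell_{k+1}$ chosen, using the infinite freedom of $X \cap \ell_{k+1}$, just outside the current convex hull and close enough to its boundary that no previously chosen extreme vertex becomes non-extreme in the enlarged hull. To carry this out uniformly, I first reduce, by Ramsey on pairs of heavy lines, to a subfamily in uniform relative position — pairwise parallel, pairwise concurrent through a common vertex, or pairwise in generic intersection — in each of which the two-vertex extension is explicit (trapezoidal stacking, hexagonal polar arrangement, or a ``close-to-boundary'' insertion). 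With only finitely many heavy lines $\ell_1,\dots,\ell_m$, the residue $X' := X \setminus (\ell_1 \cup \cdots \cup \ell_m)$ has no heavy line and still satisfies $(ii)$, so Stage 1 applied to $X'$ forces $X'$ to be finite (an infinite subset would yield a new heavy line of $X$); covering each residual point by its own line yields $(i)$. The main obstacle is the Stage 2 geometric induction: in each of the three configurations one must verify that the two new vertices on $\ell_{k+1}$ can indeed be chosen within $X$ so that no previously chosen extreme point is eclipsed, and the higher-dimensional case may also require an outer induction on $d$ (first to heavy hyperplanes, then within each to heavy lines by the induction hypothesis).
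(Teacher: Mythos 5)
Your implications $(i)\Rightarrow(iii)$ and $(iii)\Rightarrow(ii)$ coincide with the paper's (product embedding via Lemma~\ref{lm:embedding} plus dimension at most $2$ per line; $Co(V,A)\cong\powerset(A)$ for convexly independent $A$ plus Komm's theorem). The real content is $(ii)\Rightarrow(i)$, and there your Stage~2 has a genuine gap: the ``crucial claim'' that $k$ distinct heavy lines always yield $2k$ convexly independent points, two on each line, is false. Take unit vectors $u_1,u_2,u_3$ in $\mathbb{R}^2$ with $u_1+u_2+u_3=0$ and let $X:=\{(1/m)\,u_i:\ m\in\mathbb{N},\ i=1,2,3\}$. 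Each line $\mathbb{R}u_i$ is heavy, yet no convexly independent subset of $X$ contains two points of all three lines: if $r_iu_i$ is the farther chosen point on the $i$-th ray, the triangle with vertices $r_1u_1,r_2u_2,r_3u_3$ contains the origin (use barycentric coefficients proportional to $1/r_i$), hence contains the whole segment $[0,r_iu_i]$, and in particular swallows any nearer point chosen on that ray. For the same reason the inductive step ``insert two points of the fresh heavy line just outside the current hull'' cannot be performed: once the current hull contains the common point of concurrent heavy lines in its interior, all remaining points of $X$ on the new line may lie inside it, and no Ramsey normalization of the relative position of the lines repairs this. A secondary slip: you begin by letting $n$ be ``the finite maximum size of a convexly independent subset'', but $(ii)$ only excludes infinite convexly independent sets; the statement that arbitrarily large finite ones force an infinite one is exactly the compactness corollary the paper deduces \emph{from} this theorem, so it is not available at the outset (the reduction to finite affine dimension survives, since an infinite-dimensional span would contain an infinite affinely independent, hence convexly independent, set — but the bound ``number of heavy lines $\le n/2$'' does not).

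For comparison, the paper never discusses heavy lines at all: it proves the contrapositive $\neg(i)\Rightarrow\neg(ii)$. If $X$ is not covered by finitely many lines, one first extracts (greedily) a countable subset with no three points collinear; in the planar case one colours its $4$-element subsets according to whether one point lies in the convex hull of the other three, notes that this colour cannot be monochromatic because any $5$ points in general position contain $4$ in convex position, and concludes by Ramsey plus Carath\'eodory that the monochromatic ``convex-position'' set is an infinite convexly independent subset. The general case is reduced to the plane by taking the least affine dimension $\lambda$ of a subset not covered by finitely many lines and projecting onto a hyperplane: either the projection is not covered by finitely many lines (induction, then lift), or some line has a planar preimage not covered by finitely many lines (planar case). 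If you wish to salvage your two-stage scheme, the two-points-per-line claim would have to be weakened (e.g.\ to one point per line for suitably chosen lines), but the paper's contrapositive route is both shorter and free of these positional case distinctions.
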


The proof is elementary.  The proof of the equivalence between $(i)$ and $(ii)$  relies on classical arguments used in the proofs of the famous Erd\"os-Szekeres theorem (see \cite {Mor}). A connection between Erd\"os-Szekeres conjecture and relatively convex sets is Morris \cite{Mor2}, 
see also \cite{beagley}. 

\begin{proof}

\noindent $\neg (i)\Rightarrow \neg (ii)$.
 We suppose first that $V=\mathbb R^2$.

 If $X$ is not contained in the finite union of lines, then one can find  a countable subset $X_1 \subseteq X$ such that no three points
from $X_1$ are on a line. Indeed, pick two points $x_1,x_2$ from $X$ randomly, and if $x_1,\dots,x_k$ are already picked, choose $x_{k+1} \in X$ so that it does not belong to any line that goes through any two points from $x_1, \dots, x_k$.

Now form $F$, the set of $4$-element subsets of $X_1$, and colour elements of $F$ red, if one point of four is in the convex hull  of the others, and colour it blue otherwise. According to the infinite form of Ramsey's theorem, there exists an infinite subset $X_2\subseteq X_1$ such that all four-element subsets of $X_2$ are coloured in one colour. But it cannot be red colour, because, even for a $5$-element subset of points from $X_1$, at least one $4$-element subset would be coloured blue, see \cite{Mor}. Hence, $X_2$ has all $4$-element subsets coloured blue. It follows that  $X_2$ is in infinite independent subset of $X$. Indeed, if any point $x \in X_2$ was in the closure of some  finite subset $X'\subseteq X_2\setminus \{x\}$, then, due to Carath\'eodory property of the plane, $x$ would be in the closure of $3$ points from $X'$, which contradicts the choice of $X_2$.

Now, we show how to reduce the general case to the case above. 
For this purpose, let $Af(V, X)$ be the set  $A\cap X$, where $A$ is an affine subset of $V$. Ordered by inclusion, $Af(V, X)$ is an algebraic geometric lattice, that is an algebraic lattice and, as  a closure system,  it satisfies the exchange property. Every subset $Y$ of $X$ contains an affinely independent subset $Y'$ with the same affine span $S$ as $Y$;  moreover, the size of $Y'$ is equal to $dim_{af}(S)+1$ where $dim_{af}S$, the affine dimension of $S$, is the ordinary dimension of the  translate of $S$ containing $\{0\}$. 

Suppose that $X$ is not contained in a finite union of lines. 
Let $\lambda$ be the least cardinal such that $X$ contains a subset $X'$ such that $X'$ is not contained in a finite union of lines and the  affine dimension of its affine span is $\lambda$. Necessarily, $\lambda\geq 2$. If $\lambda$ is infinite then  $X$ contains an infinite convexely independent subset. Indeed, $X'$ contains an affinely independent subset of size $\lambda +1$ and every affinely independent set is convexely independent. Suppose that $\lambda$ is finite. We proceed by induction on $\lambda$. We may assume with no loss of generality that $X'\subseteq \mathbb R^{\lambda}$. If $\lambda =2$, the first case applies. 

Suppose $\lambda >2$. Let $X''$ be a projection of  $X'$ on an hyperplane $V'$. If $X''$ is not contained in a finite union of lines, then induction yields an  infinite convexely independent subset of $X''$. For each element $a'$ in this subset, select some element  $a$ in $X'$ whose projection is $a'$. The resulting set  is convexely independent. If $X''$ is contained in a finite union of lines, then there is some line such that  its inverse  image in $X'$ cannot be covered by finitely many lines. This inverse image  being a plane, the first case applies. 

$(i)\Rightarrow (iii)$  Let $(X_i)_{i\in I}$ be a family of subsets of $V$ whose union is $X$.   According to Lemma \ref{lm:embedding},  $Co(V, X)$ is embeddable into the direct product  $\Pi_{i\in I} Co(V,X_i)$, thus from the definition of dimension, $$dim (Co(V,X)) \leq dim (\Pi_{i\in I} Co(V,X_i)).$$  As it is well  known, the order-dimension of a product is at most the sum of order-dimensions of its components (see \cite{trotter}). Now, according to Lemma \ref{lm:interval} stated below,  $dim(Co(V, X_i))\leq 2$ if $X_i$ is contained in a line. Thus $dim(Co(V,X))\leq 2\times \vert I\vert$ whenever $X$ is covered by $\vert I \vert$ lines.

$(iii)\Rightarrow (ii)$ If  $A$ is a subset of $X$, $Co(V,A)$ is embeddable into $Co(V,X)$, hence $dim(Co(V, A))\leq dim (Co(V,X)$. If $A$ is  convexly independent  then  $Co(V, A)$ is order isomorphic to $\powerset (A)$ ordered by inclusion, hence, as mentioned in the preliminaries,  $dim(\powerset(A))=\vert A\vert$. Hence, $\vert A\vert \leq dim(Co(V, X)$.

 \end{proof}
 
 Let $ind(X)$ be the supremum of the cardinalities of the convexly independent subsets of $X$ and $line(X)$ be the least number of lines needed to cover $X$. 
The proofs of implications  $(i)\Rightarrow (iii)$ and $(iii)\Rightarrow (ii)$ show that the following inequalities hold. 
\begin{equation}\label{dimension}ind(Co(V,X))\leq dim (Co(V,X))\leq 2\cdot line(X). 
\end{equation}

For more on these parameters, see the paper of Beagley \cite{beagley}. 

Implication $(ii)\Rightarrow (iii)$ in Theorem \ref{ind} has the following corollary pointing out a property which is not shared by many convex geometries.

\begin{cor}
If $X$ contains  convexly independent sets of arbitrary large finite size, then it contains an infinite convexly independent set.
\end{cor}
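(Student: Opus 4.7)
The plan is to argue by contraposition, using the implication $(ii)\Rightarrow(iii)$ from Theorem \ref{ind} together with the inequality $ind(Co(V,X))\leq dim(Co(V,X))$ that is extracted from the proof of $(iii)\Rightarrow(ii)$ and recorded in \eqref{dimension}.

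First, I would suppose that $X$ contains no infinite convexly independent subset. That is exactly condition $(ii)$ of Theorem \ref{ind}, so the implication $(ii)\Rightarrow(iii)$ gives that $dim(Co(V,X))$ is a finite cardinal, say $d$.

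Next, I would invoke the bound $ind(Co(V,X))\leq dim(Co(V,X))$. This bound is justified exactly as in the proof of $(iii)\Rightarrow(ii)$: for any convexly independent subset $A\subseteq X$ one has $Co(V,A)\cong \powerset(A)$ as ordered sets, and therefore, by the Komm result recalled in the preliminaries, $|A|=dim(\powerset(A))\leq dim(Co(V,X))=d$. Hence every convexly independent subset of $X$ has size at most $d$, so the sizes of such subsets are bounded by a fixed finite integer.

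This is the contrapositive of the claim: if the sizes are unbounded, then $X$ must contain an infinite convexly independent subset. The only step that could be called a step is identifying the right bound from \eqref{dimension}; there is no genuine obstacle, since all the work has already been done inside Theorem \ref{ind}.
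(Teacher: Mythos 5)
Your proof is correct and follows essentially the paper's own route: the corollary is stated there as a consequence of the implication $(ii)\Rightarrow(iii)$ of Theorem \ref{ind}, read contrapositively, combined with the bound $ind(Co(V,X))\leq dim(Co(V,X))$ recorded in (\ref{dimension}), which is exactly the argument you give (via $Co(V,A)\cong\powerset(A)$ and Komm's result $dim(\powerset(A))=\vert A\vert$ for a convexly independent $A\subseteq X$). No gaps.
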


Let $Co^{<\omega}(\mathbb N)$ be the (semi)lattice of finite intervals  of the chain of natural numbers $\mathbb N$, ordered by inclusion, and let $\powerset^{<\omega}(\mathbb N)$ be the (semi)lattice of finite subsets of $\mathbb N$.
\begin{cor}
If $X$ is infinite, then the semilattice of compact elements of  $Co(V ,X)$ contains either $Co^{<\omega}(\mathbb N)$ or $\powerset^{<\omega}(\mathbb N)$ as a join semilattice.
\end{cor}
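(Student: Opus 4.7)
The plan is to exploit the dichotomy in Theorem \ref{ind}: either $X$ contains an infinite convexly independent subset, or $X$ is contained in a finite union of lines. In the first case I will embed $\powerset^{<\omega}(\mathbb N)$; in the second case I will embed $Co^{<\omega}(\mathbb N)$.

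First, suppose $Y=\{y_n: n\in\mathbb N\}\subseteq X$ is infinite and convexly independent. My candidate embedding is
\[
\phi:\powerset^{<\omega}(\mathbb N)\to S,\qquad \phi(F):=conv(\{y_n: n\in F\})\cap X.
\]
Join-preservation reduces to the identity $conv(A\cup B)=conv(conv(A)\cup conv(B))$ applied to the generating sets, so that $\phi(F\cup G)=\phi(F)\vee\phi(G)$ in $S$. Injectivity is exactly convex independence: if $n\in F\setminus G$, then $y_n\in\phi(F)$, while $y_n\notin conv(\{y_m: m\in G\})\supseteq \phi(G)$ by independence.

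Second, if $X$ is covered by finitely many lines, pigeonhole yields a line $\ell$ with $X\cap\ell$ infinite. After fixing an orientation of $\ell$, the set $X\cap\ell$ is linearly ordered, and Ramsey's theorem for pairs (coloring $\{i,j\}$ by whether $z_i<_\ell z_j$, for any enumeration $(z_n)$ of a countable subset of $X\cap\ell$) extracts an infinite strictly monotone sequence $(x_n)_{n\in\mathbb N}\subseteq X\cap\ell$; reversing orientation if necessary, I may assume $x_0<_\ell x_1<_\ell\cdots$. My candidate embedding is
\[
\psi: Co^{<\omega}(\mathbb N)\to S,\qquad \psi(I):=conv(\{x_n: n\in I\})\cap X,
\]
so that for a non-empty interval $I=[a,b]$, $\psi(I)$ is the segment of $\ell$ from $x_a$ to $x_b$ intersected with $X$. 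Injectivity follows because the extreme points of $\psi([a,b])$ along $\ell$ recover $a$ and $b$. Join-preservation follows because the smallest interval of $\mathbb N$ containing two intervals corresponds under $\psi$ to the convex hull on $\ell$ of the union of two segments, which is again the segment with the correct extreme endpoints.

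The hard part, such as it is, lies in the Ramsey extraction of Case 2 and in noting that $\psi(I)$ is genuinely a compact of $S$ (it is $conv$ of a finite subset of $X$ intersected with $X$) and that $\psi$ respects joins in $S$ rather than merely in $Co(V)$; both are routine. All the conceptual work is carried out by Theorem \ref{ind}: convex independence forces $\powerset^{<\omega}(\mathbb N)$ to embed, and collinearity forces the $Co^{<\omega}(\mathbb N)$ pattern through a monotone subsequence along the line.
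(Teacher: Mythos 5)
Your proof is correct and follows essentially the same route as the paper: invoke the dichotomy of Theorem \ref{ind}, obtain $\powerset^{<\omega}(\mathbb N)$ from an infinite convexly independent set, and otherwise use the pigeonhole on finitely many covering lines plus a monotone sequence on one line to obtain $Co^{<\omega}(\mathbb N)$. The only differences are matters of detail: you write out explicitly the join-embeddings that the paper delegates to the general fact about independent sets quoted in the preliminaries and to a one-line assertion about increasing or decreasing sequences.
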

\begin{proof}
Apply Theorem  \ref{ind}.  
If $X$ contains an infinite independent subset, then the semilattice of compact elements of $L:=Co(V,X)$ will have a semilattice isomorphic to $ \powerset^{<\omega}(\mathbb N)$. Otherwise, $X$ must be covered by finitely many lines. If $X$ is infinite, then one of the lines will have infinitely many points from $X$. Choose an origin and an orientation on that line. Then one can find either an increasing or a decreasing infinite  countable sequence of elements of $X$ on that line. Hence, the semilattice of compact elements of $L$ has $Co^{<\omega}(\mathbb N)$ as a subsemilattice.
\end{proof}

In the next statement, a set  $X$ of points on a  line $L$ in $V$ can be thought as a subset of the line of real numbers.
\begin{prop}\label{line}
Let $X$ be a set of points on a line $L$ in $V$.  The following are equivalent:
\begin{enumerate}[{(i)}]
\item $Co(V,X)$ is topologically scattered;
\item $Co(V,X)$ is order-scattered;
\item the semilattice $S$ of compact elements of $Co(V,X)$ is order-scattered;
\item $X$ is order-scattered in $L$.
\end{enumerate}
\end{prop}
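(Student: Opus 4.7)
My plan is to identify $X$ with a subset of $\mathbb{R}$, so that $L=Co(V,X)$ becomes the lattice of convex subsets (intervals) of the chain $X$ with $\emptyset$ adjoined, and the compact elements $S$ are $\emptyset$ together with the finite closed intervals $[a,b]\cap X$ for $a\leq b$ in $X$. I will establish each of (i), (ii), (iii) as equivalent to (iv).

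For the order-theoretic equivalences: the direction (iv) $\Rightarrow$ (iii) follows because the non-empty compact elements biject with pairs $(a,b)$, $a\leq b$, in $X^{\sigma}\times X$ under inclusion, and a finite product of order-scattered chains is order-scattered (classical). The converse (iii) $\Rightarrow$ (iv) is contrapositive: a copy $\{y_q : q\in\mathbb{Q}\}\subseteq X$ yields the chain $\{[y_{q_0},y_q]\cap X : q\geq q_0\}\subseteq S$ of order type $\mathbb{Q}_{\geq q_0}$, which contains $\mathbb{Q}$. For (iv) $\Rightarrow$ (ii), the Dedekind completion $\widehat{X}$ of a scattered chain is order-scattered (classical), and each $Y\in L$ is determined by its infimum and supremum in $\widehat{X}$ together with up to two bits indicating whether these are attained in $Y$; this gives an order-embedding $L\hookrightarrow\widehat{X}^{\sigma}\times\widehat{X}\times\{0,1\}^{2}$, a product of order-scattered posets. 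The direction (ii) $\Rightarrow$ (iv) follows from (iii) $\Rightarrow$ (iv) via $S\subseteq L$.

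For the topological equivalence, let $D(X)$ and $D'(X)$ be the lattices of down-sets and up-sets of $X$ with the product topology inherited from $\mathbf{2}^{X}$. Each is a complete chain (any two down-sets of a chain are comparable by inclusion) and an algebraic distributive lattice; as chains they are order-isomorphic to $\widehat{X}$ with endpoints added, hence order-scattered whenever $X$ is. Mislove's Theorem~\ref{Mis} then gives that $D(X)$ and $D'(X)$ are topologically scattered. The map $\mu:D(X)\times D'(X)\to\mathbf{2}^{X}$ defined by $\mu(A,B)=A\cap B$ is coordinatewise continuous (since $\chi_{A\cap B}(x)=\chi_{A}(x)\chi_{B}(x)$) and surjects onto $L$, as every convex $Y\in L$ equals $\downarrow Y\cap\uparrow Y$. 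A finite product of topologically scattered compact Hausdorff spaces is topologically scattered (a standard Cantor-Bendixson argument), and the continuous image of such a space in a Hausdorff space is topologically scattered; combining yields (iv) $\Rightarrow$ (i). For (i) $\Rightarrow$ (iv), contrapositively, if $\{y_q\}_{q\in\mathbb{Q}}\subseteq X$, then the subspace $Z:=\{[y_{q_0},y_q]\cap X : q\geq q_0\}\subseteq L$ has no isolated point: any basic open neighborhood, specified by finitely many coordinates $x_i\in X$, restricts to $Z$ as a non-degenerate interval of rational parameters (via the conditions $y_q\geq x_i$ or $y_q<x_i$), which by density contains infinitely many elements.

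The main obstacle is (iv) $\Rightarrow$ (i): one must realize $L$ as the continuous Hausdorff image of $D(X)\times D'(X)$, invoke Mislove's theorem for the factors, and rely on the preservation of topological scatteredness under finite products and under continuous images into Hausdorff spaces in the compact Hausdorff category.
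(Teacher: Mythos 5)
Your topological core (iv)~$\Rightarrow$~(i) is essentially the paper's own argument: the paper sends each interval $Y$ to the pair $(\downarrow Y,\uparrow Y)$, notes that the intersection map from $I(C)\times F(C)$ onto $Int(C)$ is continuous (Lemma \ref{lm:interval}), gets scatteredness of the two chain factors from the Cantor--Hausdorff Proposition \ref{lem:scattered chain} (where you invoke Mislove), and concludes by Lemma \ref{product}. However, two of your other steps are genuinely flawed as written. First, in (i)~$\Rightarrow$~(iv) your set $Z:=\{[y_{q_0},y_q]\cap X: q\geq q_0\}$ need not be dense in itself: the forbidden coordinates of a basic open set range over \emph{all} of $X$, not over the chosen copy of $\mathbb{Q}$. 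If some $x\in X$ satisfies $y_{q^*}<x<y_q$ for every $q$ with $y_q>y_{q^*}$ (nothing prevents this; the copy is only order-dense in itself, and $X$ may have points sitting inside its metric gaps -- e.g.\ a topologically discrete copy of $\mathbb{Q}$ with extra points of $X$ interleaved), then the basic open set $\{A: y_{q^*}\in A,\ x\notin A\}$ meets $Z$ only in $[y_{q_0},y_{q^*}]\cap X$, which is therefore isolated in $Z$. Your ``non-degenerate interval of rational parameters'' claim conflates order density of the copy with density relative to arbitrary points of $X$. The standard repair is the one the paper uses: (i)~$\Rightarrow$~(ii) holds for every algebraic lattice (Corollary \ref{top->order}), and then (ii)~$\Rightarrow$~(iii)~$\Rightarrow$~(iv) as you already argue.

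Second, in (iv)~$\Rightarrow$~(ii) the map $Y\mapsto(\inf Y,\sup Y,\text{attainment bits})$ is \emph{not} an order-embedding into $\widehat{X}^{\sigma}\times\widehat{X}\times\{0,1\}^{2}$ for any fixed order on the bit factors: for $a'<a\leq b$ one has $[a,b]\cap X\subseteq \bigl((a',b]\cap X\bigr)$ while the infimum-attainment bit goes from ``attained'' to ``not attained'', and the dual example breaks the reverse choice, so monotonicity fails whenever the inf (or sup) strictly moves. The attainment information has to be absorbed lexicographically into the chains rather than split off as product coordinates; this is exactly what the paper's Lemma \ref{lm:interval} does via $Y\mapsto(\downarrow Y,\uparrow Y)\in I(C)\times F(C)$, a product of two chains each order-scattered iff $X$ is (alternatively, a pigeonhole on the four bit patterns along a purported copy of $\mathbb{Q}$ in $L$ also repairs your argument, or you can simply obtain (ii) from (i) via Corollary \ref{top->order} once (iv)~$\Rightarrow$~(i) is established). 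With these two repairs your scheme closes correctly; the remaining steps ((iv)~$\Leftrightarrow$~(iii) and the use of Lemma \ref{product}-type facts) are fine.
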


 The equivalence holds in the more general case of a chain $C:=(X, \leq)$, and the lattice $Int(C)$ of intervals of $X$ standing for $Co(V,X)$.

 We recall that if $C:=(X, \leq )$ is a chain,  a subset $A$ of $X$ is an \emph{interval} if $x, y \in A$, $z\in X$ and $x\leq z\leq y$ imply $z\in A$. The set  $Int(C)$ of intervals of $C$, ordered by inclusion,  forms an algebraic closure system satisfying  the anti-exchange axiom. The join-semilattice of compact elements is made of the closed intervals $[a, b]:=\{z\in X: a\leq z\leq b\}$ where $a, b\in X$, $a\leq b$. 
Let $I(C)$, resp. $F(C)$, be the set of initial  resp. final, segments of $C$.  Ordered by inclusion these sets are also algebrac closure systems satisfying the anti-exchange axioms. Let $A\subseteq X$; set $\downarrow A:=\{y\in X: y\leq a\;  \text {for some}\; a\in A\}$. Define similarly $\uparrow A$; these sets are the closure of $A$ w.r.t. $I(C)$ and $F(C)$. As subsets of $\powerset (X)$,  the sets $I(C)$, $F(C)$ and  $Int(C)$ inherit of the product topology. 

The proof of Proposition \ref{line} is based on the following lemma, on some basic properties of scattered topological spaces and on a similar property for the collection of initial segment of a chain given in Proposition \ref{lem:scattered chain} below .

\begin{lm}\label {lm:interval}The map $f$ from  from $Int(C)$ into the direct product $I(C)\times F(C)$ defined by $f(A):=(\downarrow A, \uparrow A)$ is an embedding. The map $g$ from $I(C)\times F(C)$ into $Int(C)$ defined by $g(I,J):=I\cap J$ is surjective,  order-preserving and continuous. \end{lm}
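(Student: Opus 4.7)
My plan is to verify the three listed assertions in order by direct, element-level checks, since each should come down to the interval axiom and an easy manipulation of the product topology.

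First I would dispense with well-definedness. For $f$, if $A\in Int(C)$ then $\downarrow A$ is clearly an initial segment and $\uparrow A$ a final segment, so $f(A)\in I(C)\times F(C)$. For $g$, I would check that $I\cap J$ is an interval whenever $I\in I(C)$ and $J\in F(C)$: if $x,y\in I\cap J$ and $x\leq z\leq y$, then $z\leq y\in I$ gives $z\in I$, and $z\geq x\in J$ gives $z\in J$.

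Next I would show that $f$ is an order-embedding. Monotonicity is trivial from the monotonicity of the down-set and up-set operators. For order-reflection, suppose $\downarrow A\subseteq \downarrow B$ and $\uparrow A\subseteq \uparrow B$; take $x\in A$. Then $x\in \downarrow B$ yields some $b\in B$ with $x\leq b$, and $x\in\uparrow B$ yields some $b'\in B$ with $b'\leq x$. Since $B$ is an interval of $C$ and $b'\leq x\leq b$, we get $x\in B$, so $A\subseteq B$. Surjectivity of $g$ will then follow at once by showing $g\circ f=\mathrm{id}_{Int(C)}$: clearly $A\subseteq \downarrow A\cap \uparrow A$, and if $x\in \downarrow A\cap\uparrow A$ then there are $a,a'\in A$ with $a'\leq x\leq a$, whence $x\in A$ again by the interval property.

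Finally, $g$ is order-preserving since intersection is monotone in each argument, and continuity follows from the definition of the product topology on $\mathbf{2}^X$. Using the standard subbase $\{Y:x\in Y\}$ and $\{Y:x\notin Y\}$ for $x\in X$, the preimage under $g$ of $\{A:x\in A\}$ is $\{(I,J):x\in I\}\cap\{(I,J):x\in J\}$, which is open, and the preimage of $\{A:x\notin A\}$ is $\{(I,J):x\notin I\}\cup\{(I,J):x\notin J\}$, again open.

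I do not foresee any real obstacle; the entire lemma is a routine unpacking of definitions. The only place where one must actually invoke something nontrivial is the order-reflection of $f$ and the identity $g\circ f=\mathrm{id}$, both of which use the defining interval property of elements of $Int(C)$. The point of the lemma is presumably to set up the later reduction of scatteredness (order-theoretic and topological) for $Int(C)$ to the corresponding properties for $I(C)$ and $F(C)$, via the embedding $f$ on one side and the continuous order-preserving surjection $g$ on the other.
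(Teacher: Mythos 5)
Your verification is correct and is exactly the routine argument the paper has in mind, since it states only that ``the proof is straightforward'' and leaves the details to the reader: the interval property gives order-reflection of $f$ and the identity $g\circ f=\mathrm{id}_{Int(C)}$ (hence surjectivity of $g$), and continuity of $g$ follows from checking preimages of the subbasic open sets of the product topology. Note also that you correctly refrain from claiming continuity of $f$, which the paper points out fails when $X$ is infinite.
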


The proof is straightforward. Note that if  $X$ is infinite,  the map $f$ is not continuous.

We recall some  basic results on scattered spaces. 
\begin {lm}\label{product}
\begin{enumerate}
\item If  $Y$ is  a subset of a scattered topological space $X$ then  $Y$ 
is scattered w.r.t. the induced topology; 
\item  If $Y$ is a continuous image of a compact scattered topological space $X$ then $Y$ is scattered;
\item If $X$ is the union of finitely many scattered subspaces then $X$ is scattered; 
\item If $Y_i$, $i=1,...,n$, are topologically scattered spaces, then $Y:=\Pi_{i<n} Y_i$, with the product topology on $Y$, is topologically scattered  too.
\end{enumerate}
\end {lm}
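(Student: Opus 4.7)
The plan is to dispatch the four parts separately, with (1), (3), (4) requiring only direct verification against the definition and (2) requiring a Zorn-type compactness argument, which I expect to be the main obstacle.

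For (1), I would just check the definition: given $\emptyset\neq T\subseteq Y\subseteq X$, scatteredness of $X$ supplies $t\in T$ and an open $U\subseteq X$ with $T\cap U=\{t\}$, and $U\cap Y$ witnesses isolation of $t$ in $T$ for the induced topology. For (4), I would reduce by induction to two factors: given $\emptyset\neq S\subseteq Y_1\times Y_2$, use scatteredness of $Y_1$ to isolate $\pi_1(S)$ at some $y_1$ via an open $U_1\subseteq Y_1$, then apply scatteredness of $Y_2$ to the non-empty projection $\pi_2\bigl(S\cap(\{y_1\}\times Y_2)\bigr)$ to isolate it at some $y_2$ via $U_2\subseteq Y_2$; the basic open $U_1\times U_2$ then isolates $(y_1,y_2)$ in $S$.

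For (3) I again reduce by induction to $n=2$ and work in the Hausdorff setting relevant to the paper (the ambient topology being inherited from $\mathbf 2^X$). Let $X=X_1\cup X_2$ with both subspaces scattered and $\emptyset\neq S\subseteq X$. If $S\cap X_1=\emptyset$, scatteredness of $X_2$ applied to $S$ does the job. Otherwise extract an isolated point $y$ of $S\cap X_1$ via an open $U\subseteq X$ with $U\cap(S\cap X_1)=\{y\}$. If moreover $S\cap X_2\cap U=\emptyset$ then $S\cap U=\{y\}$ and we are done; otherwise apply scatteredness of $X_2$ to the non-empty set $S\cap X_2\cap U$, obtaining an isolated point $z$ via an open $V\subseteq X$ with $(S\cap X_2\cap U)\cap V=\{z\}$. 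Hausdorffness lets me shrink $V$ to exclude $y$ (unless $y=z$, in which case we are already done), and then $U\cap V$ isolates $z$ in $S$.

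The core of the lemma is part (2), which I would prove by combining scatteredness with compactness via Zorn's lemma. Given a continuous $f\colon X\to Y$ with $X$ compact scattered, $Y$ Hausdorff (the ambient spaces in our applications are all compact Hausdorff), and $\emptyset\neq S\subseteq Y$, consider the family $\mathcal F$ of closed $C\subseteq X$ with $f(C)\supseteq S$. It is non-empty (it contains $X$) and closed under intersections of descending chains: closedness is preserved, and for each $s\in S$ the non-empty closed sets $f^{-1}(s)\cap C_\alpha$ in the compact space $X$ have the finite intersection property, so their intersection with $\bigcap_\alpha C_\alpha$ is non-empty. Zorn yields a minimal $C\in\mathcal F$. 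By (1), $C$ is scattered, so it has an isolated point $x_0$ with $C\cap U=\{x_0\}$ for some open $U\subseteq X$. Minimality of $C$ forces some $s_0\in S$ not to lie in $f(C\setminus\{x_0\})$, forcing $f(x_0)=s_0$ with $x_0$ the unique preimage of $s_0$ inside $C$. The set $C\setminus U=C\setminus\{x_0\}$ is compact, hence $f(C\setminus U)$ is closed in the Hausdorff $Y$; then $V:=Y\setminus f(C\setminus U)$ is an open neighbourhood of $s_0$, and any $s\in S\cap V$ lies in $f(C)\setminus f(C\setminus\{x_0\})=\{s_0\}$, so $S\cap V=\{s_0\}$ as desired. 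The delicate step is ensuring that $\mathcal F$ is closed under taking intersections of chains, which is the place where compactness of $X$ is really used; everything else is bookkeeping.
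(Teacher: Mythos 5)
Your four arguments are all correct, but note that the paper itself does not prove this lemma: it declares items (1), (3), (4) ``immediate'' and, for item (2), cites the theorem of Pe{\l}czy\'nski and Semadeni \cite{Pel} rather than proving it. So the substance of your proposal is a self-contained proof of (2), and what you give is in fact the standard direct topological proof of that result: take a minimal closed $C\subseteq X$ with $f(C)\supseteq S$ (Zorn plus compactness, using that the fibres $f^{-1}(s)$ are closed and that a descending chain of nonempty closed sets in a compact space has nonempty intersection), isolate a point $x_0$ of the scattered set $C$, use minimality to find $s_0\in S$ whose only preimage in $C$ is $x_0$, and separate $s_0$ from the compact (hence closed) set $f(C\setminus\{x_0\})$. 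This is a legitimate alternative to citing \cite{Pel}, and it makes visible something the paper's statement suppresses: items (2) and (3) are false for arbitrary topological spaces (the two-point indiscrete space, written as the union of its two scattered singletons, or taken as a continuous image of a two-point discrete space, is not scattered), so your Hausdorff hypotheses are not cosmetic but necessary; they are harmless here because every space the paper applies the lemma to sits inside a compact Hausdorff cube $\mathbf{2}^X$. Two cosmetic remarks: in (3) the T$_1$ axiom already suffices to shrink $V$ off the point $y$, and at the end of (3) the isolating set should be written $U\cap V'$ (the shrunken neighbourhood) rather than $U\cap V$.
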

The  proof of $(2)$, quite significant,  is due to A.Pelczynski and Z.Semadeni \cite {Pel};  the proofs of the other items are immediate. 

The following result goes back to Cantor and Hausdorff. 

\begin{prop} \label{lem:scattered chain}The following properties for a chain $C$ are equivalent:
\begin{enumerate}
\item $I(C)$ is topologically scattered;
\item $I(C)$ is order-scattered;
\item $C$ is order-scattered. 
\end{enumerate}
Furthermore, if  a complete chain $D$ is order-scattered, then it is isomorphic to $I(C)$ where $C$ is some scattered chain. 
\end{prop}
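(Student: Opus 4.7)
The proposition has two separate claims: the equivalence of conditions (1)--(3) for any chain $C$, and a realization clause for complete order-scattered chains. I plan to establish the equivalences by proving $(2) \Leftrightarrow (3)$ directly, and $(1) \Leftrightarrow (2)$ via a coincidence of topologies on $I(C)$. The direction $(2) \Rightarrow (3)$ is immediate: the map $c \mapsto \downarrow c$ is an order embedding $C \hookrightarrow I(C)$, so an $\eta$-chain in $C$ lifts to one in $I(C)$. For $(3) \Rightarrow (2)$ I would invoke the classical Cantor--Hausdorff theorem that the Dedekind completion of a scattered chain is scattered: given a strictly increasing family $(J_q)_{q \in \mathbb{Q}}$ of initial segments of $C$, pick $x_{q,q'} \in J_{q'} \setminus J_q$ for each pair $q < q'$ and apply a Sierpinski/Ramsey-style combinatorial extraction to produce an $\eta$-suborder of $C$.

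For $(1) \Leftrightarrow (2)$, the key observation is that on $I(C)$ the product topology (inherited from $2^C$) coincides with the order topology. Indeed, a product sub-basic open $\{J \in I(C) : x \in J\}$ equals $\{J : J \supseteq \downarrow x\} = [\downarrow x, C]_{I(C)}$, which is the order-open set $(\downarrow x \setminus \{x\}, C]$ because $\downarrow x \setminus \{x\}$ is the immediate predecessor of $\downarrow x$ in $I(C)$; symmetrically $\{J : x \notin J\} = [\emptyset, \downarrow x)$ is order-open. Conversely, each order sub-basic $\{J : J \supsetneq A\} = \bigcup_{x \in C \setminus A}\{J : x \in J\}$ is product-open, so the two topologies agree. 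Since $I(C)$ is compact (being a closed subspace of $2^C$) and a complete chain, $(1) \Leftrightarrow (2)$ reduces to the classical equivalence between topological and order scatteredness for compact complete chains with their order topology---a result obtained via Cantor--Bendixson analysis combined with Hausdorff's structural theorem on scattered chains.

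For the realization of a complete order-scattered $D$ as some $I(C)$, I plan to proceed by transfinite induction on the Hausdorff rank of $D$. The base cases $|D| \leq 2$ are immediate. For the inductive step, Hausdorff's theorem decomposes $D$ as an ordinal (or reverse-ordinal) indexed sum of simpler complete scattered chains $D_i$, each realized (by induction) as $I(C_i)$; one then assembles $C$ by summing the $C_i$'s, being careful to include or omit a minimum/maximum according to whether the endpoints of $D$ have immediate successors/predecessors. The principal obstacles I anticipate are (a) the Cantor--Hausdorff combinatorial extraction underlying $(3) \Rightarrow (2)$, which is the classical heart of the argument, and (b) the delicate case analysis at the extremes of $D$ in the realization step---examples show that naive choices such as ``$C := D \setminus \{\top\}$'' fail in general and must be adjusted to the local structure of $D$ near its endpoints.
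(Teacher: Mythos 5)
The paper offers no proof of this proposition --- it is quoted as a classical result of Cantor and Hausdorff --- so there is no in-paper argument to measure yours against; judged on its own terms, your plan is sound. Your key verifiable step, that the product topology inherited from $2^C$ coincides with the order topology on $I(C)$ (because $\{J: x\in J\}$ is the final segment strictly above $\downarrow x\setminus\{x\}$, which is the lower cover of $\downarrow x$, while $\{J: x\notin J\}=\{J: J\subsetneq \downarrow x\}$), is correct and is exactly what reduces $(1)\Leftrightarrow(2)$ to the classical equivalence of topological and order scatteredness for complete chains in their order topology; together with the embedding $c\mapsto\downarrow c$ for $(2)\Rightarrow(3)$ this gives the equivalences. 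Two simplifications are worth noting. For $(3)\Rightarrow(2)$ no Ramsey-type extraction is needed: the paper itself runs the direct argument later, in Claim \ref{claim:I(eta)} of the proof of Theorem \ref{fin dim} (a non-scattered $I(C)$ contains a chain of type $2\cdot\eta$, and picking $x_r$ with $d_{r0}\leq x_r\leq d_{r1}$ yields a copy of $\eta$ in $C$); equivalently, from your family $(J_q)_{q\in\mathbb{Q}}$ you can index the choices $x\in J_{q'}\setminus J_q$ by a densely ordered family of pairwise disjoint rational intervals. For the realization clause, the Hausdorff-rank induction is where the real risk lies (ordinal-indexed sums of complete chains need not be complete at limit positions, and tops and bottoms of consecutive summands must be identified); a shorter route, in the spirit of Section \ref{weakly}, is to take $C:=\{d\in D: d\ \text{has an immediate predecessor}\}$: since a scattered chain is weakly atomic, every $d\in D$ is the supremum of $\{c\in C: c\leq d\}$, and one checks that $d\mapsto \{c\in C: c\leq d\}$ is an isomorphism of $D$ onto $I(C)$ with $C\subseteq D$ scattered. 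Your remaining appeals (the compact-chain equivalence, Hausdorff's theorem) are to the same classical material the paper itself cites, so I do not count them as gaps, but they are the places to expand if you want a self-contained proof.
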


As  a corollary we get the following well-known statement.
\begin{cor}\label{top->order}
Every algebraic lattice $L$ that is topologically scattered (in the product topology of $\mathbf{2}^X$)  is also order-scattered.
\end{cor}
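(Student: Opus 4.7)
The plan is to prove the contrapositive: if $L$ is not order-scattered, then $L$ is not topologically scattered. Starting from a chain of type $\eta$ in $L$, I will construct a subset $S \subseteq L$ that has no isolated point in the product topology inherited from $\mathbf{2}^X$.

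Suppose $L$ contains a chain $\{a_q : q \in \mathbb{Q}\}$ with $a_q < a_{q'}$ whenever $q < q'$. For each $r \in \mathbb{R}$, set $b_r := \bigvee_{q \in \mathbb{Q},\, q < r} a_q$; under the identification $L \cong \ideal X \subseteq \mathbf{2}^X$, this is the union $b_r = \bigcup_{q<r} a_q$ as a subset of $X$. The map $r \mapsto b_r$ is strictly increasing, hence injective: for $r_1 < r_2$, choose rationals $q < q'$ with $r_1 < q < q' < r_2$; then $a_q$ is an upper bound of $\{a_{q''} : q'' < r_1\}$ and $a_{q'}$ is a summand in the join defining $b_{r_2}$, giving $b_{r_1} \leq a_q < a_{q'} \leq b_{r_2}$.

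Fix $r_0 \in \mathbb{R}$ and any basic open neighborhood $U = \{c \in L : F_1 \subseteq c,\ F_2 \cap c = \emptyset\}$ of $b_{r_0}$, where $F_1, F_2 \subseteq X$ are finite with $F_1 \subseteq b_{r_0}$ and $F_2 \cap b_{r_0} = \emptyset$. Each $x \in F_1$ is a compact element of $L$ satisfying $x \leq \bigvee_{q < r_0} a_q$; since $\{a_q : q < r_0\}$ is directed, compactness produces a rational $q_x < r_0$ with $x \leq a_{q_x}$. Pick any $r'$ with $\max_{x \in F_1} q_x < r' < r_0$ (or merely $r' < r_0$ when $F_1 = \emptyset$). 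Monotonicity then gives $F_1 \subseteq b_{r'}$ and $F_2 \cap b_{r'} = \emptyset$ (the latter because $b_{r'} \leq b_{r_0}$), so $b_{r'} \in U$, and injectivity forces $b_{r'} \neq b_{r_0}$. Thus no $b_{r_0}$ is isolated in $S := \{b_r : r \in \mathbb{R}\}$, and $L$ is not topologically scattered.

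The only delicate step is applying compactness of each $x \in F_1$ to the directed family $\{a_q : q < r_0\}$ to produce the threshold. This is precisely where the algebraicity hypothesis enters, and it goes through even though the chain elements $a_q$ themselves need not be compact.
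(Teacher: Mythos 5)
Your argument is correct: the cuts $b_r:=\bigvee_{q<r}a_q$ are well defined because $L$ is complete, the map $r\mapsto b_r$ is strictly increasing by the interpolation of two rationals, and your use of compactness of the elements of $X$ (equivalently, the fact that a directed join of ideals is their union) is exactly what is needed to push the finite positive constraints $F_1$ below some $a_{q_x}$ with $q_x<r_0$, while the negative constraints $F_2$ are preserved automatically since $b_{r'}\subseteq b_{r_0}$; hence $S=\{b_r: r\in\mathbb{R}\}$ is a non-empty subset of $L$ with no isolated point. This is, however, a different route from the one the paper intends: Corollary \ref{top->order} is presented as a consequence of Proposition \ref{lem:scattered chain} (the Cantor--Hausdorff equivalence for chains) together with the hereditary property of topological scatteredness in Lemma \ref{product}, the idea being that a chain of type $\eta$ in $L$ sits inside a maximal chain $D$ of $L$, which by Lemma \ref{Ci} is of the form $I(X,\leq_D)$ and hence, being non-order-scattered, is non-topologically-scattered, so that $L$ itself cannot be topologically scattered. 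The paper's route is shorter given the machinery it has already set up and reuses statements needed elsewhere (maximal chains as $I(X,\leq_D)$, heredity for subspaces), whereas your proof is self-contained and elementary, works directly with the basic open sets of $\mathbf{2}^X$, and has the merit of isolating precisely where algebraicity enters (compact elements detect a rational threshold below $r_0$); either argument is acceptable.
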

\begin{proof}[Proof of Proposition \ref{line}]
The implications $(i)\Rightarrow (ii)\Rightarrow (iii)$ are valid in any algebraic closure system. 
Implication $(i)\Rightarrow (ii)$ follows from Corollary \ref{top->order}. Implication $(ii)\Rightarrow (iii)$ is trivial. 
Implication $(iii)\Rightarrow (iv)$: the set $X$ being  thought as a subset of the line of real numbers, set $C:=(C, \leq)$ with the order induced by the natural order on the reals; suppose that   $X$ contains a  subset $A$ of order type $\eta$. Pick  $a\in A$; the  intervals $[a, b]$ of $X$ with $a\leq b\in A$ form a chain of  compact  elements of order type $\eta$. Implication $(iv)\Rightarrow (i)$: 
suppose that $C$ is order-scattered. Then according to Proposition \ref{lem:scattered chain}, $I(C)$ and $F(C)$ are topologically scattered. Hence, from  (4) of Lemma \ref{product}, the direct product $I(C)\times F(C)$ is topologically scattered. Since according to Lemma \ref{lm:interval}, $Int(C)$ is the continuous image of  $I(C)\times F(C)$, it is topologically scattered from  (2) of Lemma \ref{product}. 
\end{proof}

\begin{lm}\label{q in product}
The direct product $\Pi_{k<n} C_k$ of finitely many order-scattered posets is order-scattered.
\end{lm}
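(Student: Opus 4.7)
The plan is to argue by induction on $n$, reducing to the case $n=2$, and then to derive a contradiction from an assumed embedding of $\mathbb{Q}$ into the product by showing that one of the projections must contain an embedded copy of $\mathbb{Q}$.

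The base case $n=1$ is trivial. For the inductive step, by associativity of the direct product, it suffices to prove that if $P$ and $Q$ are order-scattered, then $P\times Q$ is order-scattered. Suppose for contradiction that there is an order embedding $f\colon \mathbb{Q}\hookrightarrow P\times Q$, writing $f(q)=(p_{q},r_{q})$. Since $f$ is order-preserving, both projections $\pi_{P}\colon q\mapsto p_{q}$ and $\pi_{Q}\colon q\mapsto r_{q}$ are order-preserving maps from $\mathbb{Q}$ to $P$ and $Q$ respectively, and for every pair $q_{1}<q_{2}$ in $\mathbb{Q}$ at least one of the inequalities $p_{q_{1}}\leq p_{q_{2}}$ or $r_{q_{1}}\leq r_{q_{2}}$ must be strict.

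I would then split into two cases. First, if $\pi_{P}$ is strictly increasing on $\mathbb{Q}$, then $\pi_{P}$ itself is an order embedding of $\mathbb{Q}$ into $P$, contradicting the order-scatteredness of $P$. Otherwise, there exist rationals $q_{1}<q_{2}$ with $p_{q_{1}}=p_{q_{2}}$; by monotonicity of $\pi_{P}$, the value $p_{q}$ is then constant on the set $I:=[q_{1},q_{2}]\cap \mathbb{Q}$. Since $f$ is a strict embedding, for any $q<q'$ in $I$ we must have $r_{q}<r_{q'}$, so $\pi_{Q}$ restricted to $I$ is strictly increasing and therefore an order embedding. But $I$ has order type $\eta$, giving an embedded copy of $\mathbb{Q}$ in $Q$, again a contradiction.

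I do not expect a serious obstacle in this argument; the only subtlety is the dichotomy between ``strictly increasing everywhere'' and ``constant on some nontrivial rational interval'', which is guaranteed by monotonicity together with the density of $\mathbb{Q}$ (an order-preserving map on $\mathbb{Q}$ that is not strictly increasing on the whole domain is constant on some nondegenerate closed rational interval, which is again order-isomorphic to $\mathbb{Q}$). The induction on $n$ then closes immediately, since $\prod_{k<n+1}C_{k}\cong\bigl(\prod_{k<n}C_{k}\bigr)\times C_{n}$ and the product of two order-scattered posets is order-scattered by the case just treated.
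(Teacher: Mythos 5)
Your argument is correct and is essentially the paper's own proof: the paper likewise inducts on $n$ and splits into the two cases where the first projection is strictly increasing (giving a copy of $\mathbb{Q}$ in the first factor) or takes equal values at some $p<q$, in which case the interval $[p,q]$ is carried by the remaining projections into the product of the other factors, to which the induction hypothesis applies. (A cosmetic point only: the closed interval $[q_1,q_2]\cap\mathbb{Q}$ has order type $1+\eta+1$ rather than $\eta$, but it obviously contains a copy of $\mathbb{Q}$, so the contradiction stands.)
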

\begin{proof} Induction by $n$. It is trivial for $n=1$. Suppose it is true for $n'$.  But that there is an embedding of $\mathbb {Q}$ into a  product of $n'+1$ posets $P_k$, $k\leq n'+1$. If for each pair $r<q$ of rationals, we have $r[1]<q[1]$, then $\mathbb{Q}$ can be embedded into $C_1$. If for some $p<q$ we have $p[1]=q[1]$, then interval $[p,q]\simeq \mathbb{Q}$ must be embedded into $C_2\times \dots \times C_{k+1}$. Then, according to hypothesis, it should be embedded into one of $C_2,\dots, C_{k+1}$.
\end{proof}
Let $\ideal P$ be the collection of ideals of a poset. 
\begin{lm}\label{lm:pdscattered}
$\ideal  (P\times Q)$ is isomorphic to $\ideal  P \times \ideal  Q$. En particular, if $(C_k)_{k<n}$ is a finite family of chains then  $\ideal  (\Pi_{k<n} C_k)$ is isomorphic to $\Pi_{k<n} \ideal  (C_k)$.  
\end{lm}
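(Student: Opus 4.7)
The plan is to exhibit mutually inverse order-isomorphisms between $\ideal(P\times Q)$ and $\ideal P\times \ideal Q$, using the projections one way and the Cartesian product the other. Recall that an ideal is a non-empty up-directed initial segment.

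First I would define $\phi: \ideal(P\times Q)\to \ideal P\times \ideal Q$ by $\phi(I):=(\pi_P(I),\pi_Q(I))$, where $\pi_P$ and $\pi_Q$ are the two projections. Each projection of an ideal is non-empty, is an initial segment (if $p'\leq p=\pi_P(p,q)$ with $(p,q)\in I$, then $(p',q)\leq (p,q)$ lies in $I$), and is up-directed (given $p_1,p_2\in \pi_P(I)$, pick witnesses $(p_i,q_i)\in I$ and an upper bound $(p_3,q_3)\in I$; then $p_3\in \pi_P(I)$ dominates $p_1,p_2$). Next I would define $\psi:\ideal P\times \ideal Q\to \ideal(P\times Q)$ by $\psi(J,K):=J\times K$, and verify that $J\times K$ is non-empty (since $J,K$ are), an initial segment (coordinatewise), and up-directed (take componentwise upper bounds in $J$ and $K$ and pair them).

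The main step, and the only one that uses anything nontrivial, is proving $\psi\circ \phi = \mathrm{id}_{\ideal(P\times Q)}$; that is, for $I\in \ideal(P\times Q)$ we must show $I=\pi_P(I)\times \pi_Q(I)$. The inclusion $I\subseteq \pi_P(I)\times \pi_Q(I)$ is trivial. For the reverse inclusion, take $(p,q)\in \pi_P(I)\times \pi_Q(I)$ and pick $(p,q')\in I$ and $(p',q)\in I$ witnessing this. By up-directedness of $I$ there exists $(p'',q'')\in I$ with $(p,q')\leq (p'',q'')$ and $(p',q)\leq (p'',q'')$; then $(p,q)\leq (p'',q'')$, and since $I$ is an initial segment, $(p,q)\in I$. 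The opposite composition $\phi\circ \psi = \mathrm{id}$ is immediate since $\pi_P(J\times K)=J$ and $\pi_Q(J\times K)=K$ when both $J$ and $K$ are non-empty. Both maps are clearly order-preserving for inclusion, so $\phi$ is an order-isomorphism.

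The ``in particular'' statement follows by induction on $n$: the case $n=1$ is trivial, and for the inductive step one uses the canonical order-isomorphism $\Pi_{k<n+1}C_k\cong (\Pi_{k<n}C_k)\times C_n$ together with the just-proved case of two factors to obtain
\[
\ideal(\Pi_{k<n+1}C_k)\cong \ideal(\Pi_{k<n}C_k)\times \ideal(C_n)\cong \Pi_{k<n+1}\ideal(C_k).
\]
The up-directedness argument above is the heart of the proof; everything else is bookkeeping.
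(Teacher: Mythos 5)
Your proof is correct and complete: the projections and the Cartesian product give mutually inverse order-isomorphisms, and your use of up-directedness to show $I=\pi_P(I)\times\pi_Q(I)$ is exactly the point where the ideal hypothesis (as opposed to an arbitrary initial segment) is needed. The paper does not prove this lemma at all, merely calling it well known and citing a reference, and your argument is the standard one that reference relies on, so there is nothing to compare beyond noting that your write-up supplies the details the paper omits.
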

This is well-known, see \cite {PoSiZa2} for an example. 
For the proof of Theorem \ref{CP-like} we  use the following corollary of Lemma \ref{product} and  Lemma \ref{lm:embedding}.
\begin{cor}\label{prod}
Suppose $X \subseteq \bigcup_{i \leq n} X_i \subseteq V$. If every $Co(V,X_i)$ is topologically scattered, then $Co(V,X)$ is topologically scattered.
\end{cor}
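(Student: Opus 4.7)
The plan is to transfer topological scatteredness from the pieces $Co(V,X_i)$ to $Co(V,X)$ via Lemma \ref{lm:embedding}, using parts (1), (2), and (4) of Lemma \ref{product}. First, since the hypothesis only gives $X\subseteq \bigcup_{i\leq n}X_i$, I would replace each $X_i$ by $X_i':=X\cap X_i$; these still form a finite covering of $X$, now by subsets of $X$, and for each $i$ the closure $Co(V,X_i')$ coincides with the closure induced by $Co(V,X_i)$ on $X_i'$.

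Next, I would observe that each $Co(V,X_i')$ is topologically scattered. Indeed, the restriction map $A\mapsto A\cap X_i'$ is a continuous surjection from $Co(V,X_i)$ onto $Co(V,X_i')$ (it is just the projection $\mathbf{2}^{X_i}\to\mathbf{2}^{X_i'}$ on the relevant coordinates, restricted to closed sets), and $Co(V,X_i)$ is compact because it is a closed subspace of $\mathbf{2}^{X_i}$. Hence Lemma \ref{product}(2) gives that $Co(V,X_i')$ is topologically scattered, and then Lemma \ref{product}(4) says the finite direct product $\Pi_{i\leq n}Co(V,X_i')$ is topologically scattered.

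Finally, Lemma \ref{lm:embedding} applied to the family $(X_i')_{i\leq n}$ provides an order-embedding
\[
\rho: Co(V,X)\longrightarrow \Pi_{i\leq n}Co(V,X_i'), \qquad \rho(Y):=(Y\cap X_i')_{i\leq n},
\]
which is continuous because the closure operator $conv$ restricted to $X$ is finitary. Since $Co(V,X)$ is compact (again as a closed subspace of $\mathbf{2}^X$) and the product is Hausdorff, the continuous injection $\rho$ is automatically a homeomorphism onto its image. That image is a subspace of the topologically scattered product, hence topologically scattered by Lemma \ref{product}(1), and consequently $Co(V,X)$ itself is topologically scattered.

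I expect no serious obstacle: the only point that needs a little care is that $\rho$ is a topological embedding rather than just a continuous order-embedding, and this is handled by the standard ``compact to Hausdorff'' argument using compactness of $Co(V,X)$ in the product topology of $\mathbf{2}^X$. Everything else is bookkeeping with the lemmas already established.
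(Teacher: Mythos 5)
Your proof is correct and follows essentially the same route as the paper: embed $Co(V,X)$ into the finite product via Lemma \ref{lm:embedding}, apply parts (4) and (1) of Lemma \ref{product}, and pull scatteredness back through the continuous injection $\rho$. Your extra step of replacing $X_i$ by $X_i':=X\cap X_i$ (justified via the continuous restriction surjection, compactness of $Co(V,X_i)$ and Lemma \ref{product}(2)) is a welcome bit of care that the paper silently omits, and the compact-to-Hausdorff argument at the end is not even needed, since scatteredness pulls back along any continuous injection.
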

\begin{proof}
%
Since the image of $Co(V,X)$ under $\rho$ is a subspace of $\Pi_{i<n} Co(V,X_i)$  and,  The map $\rho$ from $Co(V,X)$ into $\Pi_{i<n} Co(V,X_i)$ defined in  Lemma \ref{lm:embedding} is continuous. Due to $(4)$ of Lemma \ref{product}, $\Pi_{i<n} Co(V,X_i)$ is topologically scattered, hence the image of $Co(V,X)$  by $\rho$ is scattered. Since $\rho$ is one-to-one,  $Co(V,X)$ must be topologically scattered as well.
\end{proof}

\begin{proof}[Proof of Theorem \ref{CP-like}]
We note first that implications $(i)\Rightarrow (ii)$ and $(ii)\Rightarrow (iii)$ holds for arbitrary algebraic lattices. 

\noindent$(i)\Rightarrow (ii)$ Corollary \ref{top->order}. 

\noindent $(ii)\Rightarrow (iii)$. If $S$ contains  a join semilattice isomorphic to $\powerset^{<\omega}(\mathbb N)$ then, since $L$ is algebraic, $L$ contains  a join semilattice isomorphic to $\powerset(\mathbb N)$; since $\powerset (\mathbb N)$ contains a copy of the chain of real numbers, $L$ is not order-scattered. 

\noindent $(iii)\Rightarrow(iv)$. Suppose that $(iii)$ holds. Since $L$ is algebraic and $S$  does not contain a join subsemilattice isomorphic to  $\powerset ^{<\omega}(\mathbb N)$  then, as mentioned in the preliminaries,  $X$ cannot contain an infinite independent subset.  Hence, according to Theorem \ref{ind}, $X$ should be covered by  finitely many lines $\ell_i$, $i \leq n$. For $i\leq n$, set $X_i:=X\cap \ell_i$.  Since $Co(V, X_i)$ is a join subsemilattice  of  $Co(V,X)$, it is order-scattered, in particular the order induced by any orientation of $\ell_i$ is scattered. 

 $(iv)\Rightarrow(i)$. Suppose that $(iv)$ holds. Let  $\ell_i$, $i \leq n$ be finitely many lines whose union covers $X$ and such that the order induced by each  orientation on $\ell_i\cap X$ is scattered. For $i\leq n$, set $X_i:=X\cap \ell_i$.  Due to Proposition \ref{line}, $Cov(V, X_i)$ is topologically scattered. Corollary 
\ref{prod} implies that $Co(V,X)$ is  topologically scattered as well. 
\end{proof}

\section{The lattice of subsemilattices and the lattice of suborders}\label{finite}

The convex geometries made of the  subsemilattices of a semilattice and of the suborders of a partially ordered set play an  important role in the studies of convex geometries in general due to their close connection to lattices of quasi-equational theories, see \cite{Ada, A09, AdaGorTum, S05, W05}.

\begin{thm}
If $S$ is an infinite meet-semilattice, then the lattice $Sub_\wedge(S)$ of meet-subsemilattices of $S$ always has a copy of $\mathbb{Q}$. Thus, $Sub_\wedge(S)$ is order-scattered iff $S$ is finite.
\end{thm}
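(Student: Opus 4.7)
My plan is to embed the Boolean lattice $2^{\mathbb{N}}$ into $Sub_\wedge(S)$, which would immediately give the theorem: $2^{\mathbb{N}}$ contains a chain of order type $\eta$ (after any enumeration $\mathbb{Q}=\{q_n:n\in\mathbb{N}\}$, the map $q\mapsto\{n\in\mathbb{N}:q_n<q\}$ does the job), so $\mathbb{Q}\hookrightarrow Sub_\wedge(S)$ and the lattice is not order-scattered. The reverse direction of the ``iff'' is trivial, since finite $S$ forces $Sub_\wedge(S)$ to be finite.

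To produce the embedding I would invoke the classical Ramsey dichotomy: every countably infinite subset of $S$ contains an infinite chain or an infinite antichain. In the chain case, an infinite chain $C\subseteq S$ satisfies $x\wedge y=\min(x,y)$ for all $x,y\in C$, so \emph{every} subset $B\subseteq C$ already lies in $Sub_\wedge(S)$; the inclusion $B\mapsto B$ is then a lattice embedding $2^{C}\hookrightarrow Sub_\wedge(S)$. In the antichain case, for $A=\{a_i:i\in\mathbb{N}\}$ an infinite antichain, I would send $B\subseteq\mathbb{N}$ to $f(B):=$ the $\wedge$-subsemilattice of $S$ generated by $\{a_i:i\in B\}$. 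Monotonicity is immediate, and the order-reflectivity of $f$ reduces to the identity
\[
f(B)\cap A=\{a_i:i\in B\}.
\]

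The technical heart of the proof is this last identity: any element of $f(B)$ outside $\{a_i:i\in B\}$ is a meet $a_{j_1}\wedge\cdots\wedge a_{j_k}$ of $k\geq 2$ distinct antichain members, and since $a_{j_1}\not\leq a_{j_2}$ by the antichain property, such a meet is strictly below each $a_{j_r}$ and therefore cannot itself belong to $A$. Once this is in place, $f(B)\subseteq f(B')$ implies $B\subseteq B'$ by intersecting with $A$, and we are done. I do not anticipate any real obstacle beyond this small verification; the Ramsey dichotomy and the Dedekind-cut trick are both textbook, and neither case requires any structural hypothesis on $S$ beyond being an infinite meet-semilattice.
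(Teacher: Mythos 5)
Your proof is correct and follows essentially the same route as the paper: extract an infinite chain or antichain via the Ramsey dichotomy and observe that the power set of that subset order-embeds into $Sub_\wedge(S)$ (your explicit maps, identity on subsets of a chain and $B\mapsto$ the generated subsemilattice for an antichain, are exactly the verification that such a subset is independent, which is how the paper gets the embedding of $\powerset(X)$). The only difference is that you spell out the embedding and the reflection identity $f(B)\cap A=\{a_i:i\in B\}$ rather than citing the general independence lemma from the preliminaries.
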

\begin{proof}
As it is well known,  every infinite poset contains either an infinite chain or an infinite antichain. Let $X$ be such a subset of $S$. As it is easy to check, such a  subset is independent w.r.t.   the closure associated with $Sub_\wedge(S)$,  hence  $\powerset({X})$ is embeddable into $Sub_\wedge(S)$.  Since $X$ is infinite, $\powerset({X})$ contains a copy of the real line, hence of  $\mathbb{Q}$. 
%
%
\end{proof}

Similar result holds for the lattice of suborders. For a partially ordered set $\langle P,\leq \rangle$, denote by $S(P)$ the  strict order associated to  $P$, i.e. $S(P)=\{(p,q): p\leq q \text{ and } p\not = q, p,q \in P\}$. Then, $O(P)$, 
the lattice of suborders of $P$,  is the set of  transitively closed subsets of $S(P)$.

\begin{thm}
The lattice of suborders $O(P)$ of a partially ordered set  $\langle P,\leq \rangle$ is order-scattered iff $S(P)$ is finite.
\end{thm}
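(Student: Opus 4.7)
The plan is to parallel the subsemilattice theorem immediately above. The $(\Leftarrow)$ direction is trivial: if $S(P)$ is finite then $O(P)$ is a finite sublattice of the powerset of $S(P)$, hence order-scattered. For $(\Rightarrow)$, assuming $S(P)$ is infinite, it suffices (by the characterization of independent sets recalled in the preliminaries) to exhibit an infinite $I\subseteq S(P)$ that is independent with respect to the transitive-closure operator $\phi$ associated to $O(P)$; then $\powerset(I)$ embeds into $O(P)$ and in particular $O(P)$ contains a chain of order type $\eta$.

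To locate $I$, form the undirected comparability graph $G(P)$ whose edges are the unordered pairs $\{p,q\}$ with $(p,q)\in S(P)$. Run the obvious greedy procedure, repeatedly picking a new edge of $G(P)$ vertex-disjoint from all previously chosen ones. Either the procedure never halts, producing an infinite matching of $G(P)$, or it halts after finitely many steps at a maximal matching $M$.

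In the first case, take $I$ to be the oriented pairs $(a_i,b_i)$ coming from the matching edges $\{a_i,b_i\}$, with $a_i<b_i$ in $P$. Any alleged derivation of $(a_i,b_i)$ from $I\setminus\{(a_i,b_i)\}$ would require a chain $a_i=x_0<x_1<\dots<x_n=b_i$ with $n\ge 2$ whose links $(x_k,x_{k+1})$ all lie in $I$; then $x_1$ would simultaneously equal $b_{j_0}$ and $a_{j_1}$ with $j_0\ne j_1$, contradicting the vertex-disjointness of matching edges. In the second case, the vertex set of the finite maximal matching $M$ is a finite vertex cover of the infinite edge set of $G(P)$, so by pigeonhole some $v\in P$ is comparable to infinitely many elements of $P$; passing to an infinite $X\subseteq P$ with $v<x$ for every $x\in X$ (or dually $x<v$ for every $x\in X$), set $I:=\{(v,x):x\in X\}$. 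Independence is immediate: any derivation of $(v,x_0)$ from $I\setminus\{(v,x_0)\}$ would need a second link $(y_1,y_2)$ lying in $I$, which forces $y_1=v$, but the first link $(v,y_1)\in I$ already gives $y_1>v$.

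The point requiring care is the maximal-matching step: one must justify that when the greedy procedure terminates, every edge of $G(P)$ shares a vertex with the finite collection of chosen edges, so that a finite vertex cover is obtained. This is the standard observation that an edge disjoint from the current matching could be added, contradicting maximality, and should be stated explicitly for the infinite-graph setting. Once $I$ is in hand, the embedding $\powerset(I)\hookrightarrow O(P)$ follows from the general machinery recalled in the preliminaries, and since $\powerset(\mathbb{N})$ contains a copy of the chain $\mathbb{Q}$, $O(P)$ fails to be order-scattered, completing $(\Rightarrow)$.
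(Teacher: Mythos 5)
Your proof is correct, but it takes a genuinely different route from the paper. The paper proves the forward direction by a Ramsey argument: it takes an infinite sequence of pairs from $S(P)$, classifies pairs of indices by the order-isomorphism type of the induced four-point configuration, and extracts a homogeneous infinite set, yielding one of three unavoidable suborders (an infinite chain, an infinite antichain with a common lower or upper bound, or a direct sum of infinitely many $2$-element chains), from which $\powerset(\N)$ is embedded into $O(P)$. You instead work in the comparability graph and use the elementary matching/vertex-cover dichotomy: an infinite matching gives an infinite independent set of pairwise vertex-disjoint covering pairs, while a finite maximal matching gives a finite vertex cover of infinitely many edges and hence a ``star'' $\{(v,x):x\in X\}$ (or its dual), which is again independent for the transitive-closure operator; in either case the independence criterion recalled in the preliminaries embeds $\powerset(\N)$, hence $\Q$, into $O(P)$. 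Your independence verifications (first/second link of a derivation chain clashing with vertex-disjointness, resp.\ with $v<y_1$) are sound, and the maximal-matching point you flag is exactly the right thing to make explicit. What each approach buys: yours avoids Ramsey's theorem entirely and is more elementary, closely paralleling the subsemilattice theorem via independence; the paper's Ramsey route costs more machinery but yields the sharper structural conclusion (the explicit list of three unavoidable suborders in Claim \ref{claim:ramsey}), of which your matching and star configurations are special cases.
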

\begin{proof}
Suppose $S(P)$ is infinite. 
\begin{claim}\label{claim:ramsey}
$O(P)$ contains a suborder of $P$ which is either
\begin{enumerate}
\item an infinite chain, 
or
\item an infinite antichain with an element below or  above all the elements of the antichain, 
or
\item the direct sum of infinitely many $2$-element chains. 
\end{enumerate}
\end{claim}
\noindent{\bf Proof of Claim \ref{claim:ramsey}. }
Let $(x_{0,n},x_{1,n})_{n\in \N}$ be an infinite sequence or elements of $S(P)$. Let $[\N]^2$ be the set of pairs of integers, identified with pairs $(n,m)$, with $n<m$.  Say that two  pairs  $(n, m)$ and $(n',m')$ are equivalent if there is a  map $h$ from $H_{n,m}:=\{x_{i, n}, x_{i,m}: i<2\}$ into $H_{n',m'}:=\{x_{i, n'}, x_{i,m'}: i<2\}$ satisfying $h(x_{i,n}):=x_{i, n'}$ and $h(x_{i,m}):=x_{i, m'}$ for $i<2$ which is an order-isomorphism of $H_{n,m}$ on $H_{n',m'}$,  once these sets are ordered according to $P$. This define an equivalence relation on $[\N]^2$. The number of classes being finite, the infinite version of Ramsey's theorem ensures that we can find an infinite  subset $I$ of $\N$ such that all pairs in $[I]^2 $ are equivalent. If there is a pair $(n, m)\in [I]^2$ with $x_{i,n}$ and $x_{i,m}$ comparable and distinct (for some $i<2$), then this is the same for all the other pairs. In this case we obtain an infinite chain. Exclude this case. If there is a pair $(n, m)\in [I]^2$ with $x_{i,n}= x_{i,m}$, then $x_{i+1,n}$ and $x_{i+1,m}$ must be incomparable and we get an infinite antichain with an element below or  above all the elements of the antichain. Excluding  this case,  there is a pair $(n, m)\in [I]^2$ with $x_{i,n}$ incomparable  to $x_{i,m}$,for $i<2$. If $x_{i,n}$ is comparable  to $x_{i+1,m}$ for some $i<2$, then we get an element below or above all elements of an antichain. This case being excluded, we get a direct sum of infinitely many $2$-element chains, as claimed. \hfill   $\Box$

This claim allows to define an embedding from $\powerset(\mathbb {N})$  into $O(P)$. It follows that $\Q$ is embeddable into $O(P)$. 
\end{proof}

\section{Representation of join-semilattices}

Let $L$ and $L'$ be two join-semilattices. A map $f:L\rightarrow L$ is \emph{join-preserving} if 

\begin{equation} 
f(a\vee b)=f(a)\vee f(b)
\end{equation}

holds for every $a,b\in L$. 

If $L$ and $L'$ are complete lattive, $f$ \emph{preserves arbitrary joins} if 

\begin{equation} 
f(\bigvee X)=\bigvee f(X)
\end{equation}

holds for every subset $X$ of $L$. 

The definitions of \emph{meet-preserving maps} (for meet-semilattices) and \emph{maps preserving arbitrary meets} are similar. 

We set $L\leq_{\vee} L'$, resp. $L\leq_{\nabla} L'$, if there is an embedding of $L$ into $L'$ preserving finite, resp. arbitrary, joins. 

There is a correspondence  between maps from a complete lattice $L$ to a complete lattice $L'$  preserving arbitrary joins and maps from $L'$ to $L$ preserving arbitrary meets. It goes back to Ore. We illustrate it with Theorem \ref{duality}, Theorem \ref{thm:algebraicconvex} and Theorem \ref{thm:compact}  below. 

For this purpose, we recall that a subset $A$ of a complete lattice $L$ is \emph{meet-dense} if every $x\in L$ is the meet (possibly infinite) of elements  of $A$.  The complete \emph{join-dimension} of a complete lattice $L$ is the least cardinal $\kappa$ such that $L$ can be embedded  into a direct product of $\kappa$ complete chains by a map preserving complete joins; we denote it by $dim_{\nabla}(L)$.  The   \emph{join-dimension} of a join-semilattice $L$ is the least cardinal $\kappa$ such that $L$ can be embedded  into a product of $\kappa$  chains by a join- preserving map. We denote it by $dim_\vee(L)$. 

Note that a map $f$ from $L$ to $L'$ which  preserves arbitrary joins sends the least element of $L$ onto the least element of $L'$. Thus, if $L'$ is a product of complete chains $(C_i)_{i<\kappa}$ each with least element $0_i$ then $f(0) =(0_i)_{i<\kappa}$. This equality  is no longer true if $f$ is only join-preserving, but if we consider join-semilattices with a least element, we may impose that this equality holds and the value of the dimension will be the same. 

Note that the join-dimension and complete-join-dimensions may differ from the order-dimension. 

\begin{exm}
\end{exm}
The lattice $M_3$ has  order dimension 2. Indeed, if $a,b,c$ are atoms, then we may  embed it  into a product of two chains: $C_1=0_1<a_1<b_1<c_1<1_1$, and $C_2=0_2<c_2<b_2<a_2<1_2$, and $f(x)=(x_1,x_2)$. While this embedding preserves the order, it does not preserve the join operation. It is easy to check that any attempt at using two chains will not result in $f(a)\vee f(b)=f(a)\vee f(c)=f(b)\vee f(c)$. On the other hand, one can make representation with three chains: $C_x=0_x<x<1_x$, $x=a,b,c$, and $f(x)[y]=x$, if $0<x=y<1$, $1_y$ otherwise. Thus, $dim_\vee(M_3)=3$. 

\begin{exm}
\end{exm}
A more involved example is $\powerset^{<\omega}(\mathbb N)$. Indeed, according to \cite{kierstead-milner} if $E$ is an infinite set of cardinal $\kappa$,  $dim(\powerset^{<\omega}(E))= log_2(log_2(\kappa))$, while it can be shown that  $dim_{\vee}(\powerset^{<\omega}(E))= log_2(\kappa)$.
\begin{exm}
\end{exm}
Let $L:= Co(X,V)$. As already mentioned in inequalities (\ref{dimension}),    
 if $X$ can be covered by $n$ lines ($n<\omega$) then $dim (L) \leq 2\cdot n$. On an other hand, if $X$ is the union of three lines,  $dim_{\vee}(L)$ is infinite. Indeed, according to 
Corollary \ref{cor:dimension} it suffices to observe that  $L$ contains  infinite antichains of completely meet-irreducibles. This fact is easy to prove: for sake of simplicity, we suppose $V:=\R^2$ and $X$ be the union of three lines $\ell, \ell'$ and $\ell''$. Pick $x\in \ell$ such that some line $\delta$ containing $x$ meets  $ \ell'$ in $a'$,  $\ell''$ in $a''$ with   $x$ belonging  to the segment joining $a'$ and $a''$.  Let $\Delta$ be the union of one of the two open half-planes determined by $\delta$ and one of the two open half-lines determined by $\delta$ and $x$. This set is a completely meet-irreducible convex subset of $\R^2$.  It turns out that $\Delta\cap X$ is a maximal convex subset in $L$ not containing $x$ hence is completely meet-irreducible  in $L$. Since distinct  lines $\delta$ provide incomparable completely meet-irreducibles  in $L$, there are  infinite antichains of completely meet-irreducibles.  
%

The \emph{chain-covering-number} of a poset $P$, denoted by $cov(P)$ is the least cardinal number $\kappa$ such that $P$ can be covered by $\kappa$ chains. We recall Dilworth's theorem (see, for example \cite{Nat}): \emph{If $P$ is a (possibly, infinite) poset of finite width $n$, then $cov(P)=n$}.

The following result illustrates the correspondence  we alluded. 


\begin{thm}\label{duality}
For every complete lattice $L$, the following property holds:
\begin{equation}\label{eq:join-representation1}
dim_{\nabla}(L)= Min \{cov(A): A  \; \text {is meet-dense in}\; L\}.
\end{equation}
\end{thm}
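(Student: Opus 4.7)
The plan is to establish the two inequalities separately, using the standard Galois adjunction between join-preserving and meet-preserving maps of complete lattices (the right adjoint of a join-preserving map preserves arbitrary meets, and vice versa).

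For the inequality $\min\{cov(A) : A \text{ meet-dense}\} \leq dim_{\nabla}(L)$, I would start with a complete-join-preserving embedding $f\colon L \to \prod_{i<\kappa} C_i$ into complete chains. For each $i$, the coordinate map $\pi_i \circ f$ preserves arbitrary joins and hence admits a right adjoint $g_i\colon C_i \to L$, characterized by $\pi_i f(x) \leq c \iff x \leq g_i(c)$. Since $g_i$ is order-preserving, its image $A_i := g_i(C_i)$ is a chain; set $A := \bigcup_{i<\kappa} A_i$, which is covered by $\kappa$ chains. The adjunction gives $x \leq g_i(\pi_i f(x))$ for every $i$, so $x \leq y := \bigwedge_i g_i(\pi_i f(x))$. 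Conversely, $y \leq g_i(\pi_i f(x))$ translates via the adjunction to $\pi_i f(y) \leq \pi_i f(x)$ for every $i$, whence $f(y) \leq f(x)$ coordinatewise and then $y \leq x$ by injectivity. This gives $x = \bigwedge_i g_i(\pi_i f(x))$, so $A$ is meet-dense in $L$.

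For the reverse inequality $dim_{\nabla}(L) \leq \min\{cov(A)\}$, I would take a meet-dense $A = \bigcup_{i<\kappa} C_i$ with each $C_i$ a chain, and for each $i$ let $D_i$ be the set of up-sets of $C_i$ ordered by reverse inclusion; since $C_i$ is a chain, $D_i$ is a complete chain (essentially the Dedekind-MacNeille completion of $C_i$). Define $f_i\colon L \to D_i$ by $f_i(x) := \{c \in C_i : x \leq c\}$. A direct computation gives
\[ f_i\Bigl(\bigvee_j x_j\Bigr) \;=\; \bigcap_j f_i(x_j), \]
and $\bigcap$ is the join in $D_i$, so the product map $f := (f_i)_{i<\kappa}\colon L \to \prod_i D_i$ preserves arbitrary joins. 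Meet-density of $A$ forces $f$ to be an order-embedding: $f(x) \leq f(y)$ unpacks to $\{a \in A : y \leq a\} \subseteq \{a \in A : x \leq a\}$, and then $x = \bigwedge\{a \in A : x \leq a\} \leq \bigwedge\{a \in A : y \leq a\} = y$.

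The hard part will be pure bookkeeping: keeping the direction of the adjunction straight and choosing the correct complete-chain target for each $f_i$. The up-set lattice of $C_i$ under reverse inclusion is the natural target because meet-density of $A$ is precisely the statement that $x$ is determined by which elements of $\bigcup_i C_i$ lie above it, which is exactly the information recorded by the tuple $(f_i(x))_{i<\kappa}$; this observation is what makes both implications collapse into short adjunction computations rather than ad hoc constructions.
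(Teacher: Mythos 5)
Your proof is correct, and its overall architecture (two inequalities mediated by the adjunction between maps preserving arbitrary joins and maps preserving arbitrary meets) is the same as the paper's. The direction $\min\{cov(A)\}\leq dim_{\nabla}(L)$ is essentially the paper's argument in coordinatewise form: the paper takes the global right adjoint $f_{\vee}(x')=\bigvee f^{-1}(\downarrow x')$ and applies it to the meet-dense subset of the product made of the coordinate copies of the chains; restricted to the $j$-th copy this is exactly your $g_j$, so the meet-dense set produced is the same. Where you genuinely diverge is the direction $dim_{\nabla}(L)\leq \min\{cov(A)\}$: the paper extends each covering chain $C_i$ to a maximal chain $L_i$ of $L$ (a use of Zorn's lemma), relies on the fact that a maximal chain of a complete lattice is a complete sublattice, and sends $x$ to $\ell_i(x)=\bigwedge(L_i\cap \uparrow x)$, checking that each retraction $\ell_i$ preserves arbitrary joins; you instead target an external completion of $C_i$, namely its complete chain of up-sets ordered by reverse inclusion, via $x\mapsto\{c\in C_i: x\leq c\}$, for which join-preservation is the one-line identity $f_i(\bigvee_j x_j)=\bigcap_j f_i(x_j)$ and order-reflection follows at once from meet-density. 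Your variant is a bit more economical (no maximal-chain extension, no choice principle, no lemma on maximal chains), while the paper's internal construction has the advantage of producing chains that live inside $L$ and of exhibiting the maps $\ell_i$ as join-preserving retractions onto maximal chains, a formulation the paper reuses later (Lemma \ref{Ci} and Lemma \ref{join}).
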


\begin{proof}Let $A$ be a meet-dense subset of $L$ and $\mathcal C:= (C_i)_{i\in I}$ be a family of chains whose union covers $A$. Extend each $C_i$ to a maximal chain $L_i$ of $L$ (maximal w.r.t. inclusion). Let $L'$ be the direct product of the $L_i$'s. For each $x\in L$ we  set $f(x):= (\ell_{i}(x))_{i\in I}$ where $\ell_{i}(x):= \bigwedge (L_i \cap \uparrow x)$. This defines an embedding $f$ from $L$ into $L'$ which preserves arbitrary joins.  Indeed, since $L_i$ is maximal in $L$, $L_i$ is a complete sublattice of $L$; in particular  for each $x\in L$, $\ell_{i}(x)$ is the least element of $L_i$ above $x$. Thus the map $\ell_i$ is a retraction of $L$ onto $L_i$ which preserves arbitrary joins. It follows that $f$ preserves arbitrary joins. Since $A$ is meet-dense, $f$  is one-to-one and thus is an embedding. This proves that 
$dim_{\nabla}(L)\leq Min \{cov(A): A  \; \text {is meet-dense in}\; L\}$. Conversely, suppose that there is an embedding $f$ preserving arbitrary joins from  $L$ into a product $L':= \Pi_{i\in I} L'_{i}$  of complete chains.  For $x'\in L'$ set $f_{\vee}(x')= \bigvee f^{-1}( \downarrow x')$. Since $f$ preserves arbitrary joins,  
 $f_{\vee}(x')$ is the largest element $y\in L$ such that $f(y)\leq x$. It follows that the map $f_{\vee}$ preserves arbitrary meets. Since $f$ is one-to-one, it follows that $f_{\vee}\circ f (x)=x$ for every $x\in L$, hence the image $A:= f_{\vee}(A')$ by $f_{\vee}$ of every meet-dense subset $A'$ of $L'$ is meet-dense in $L$.  To conclude, take $A':= \bigcup_{j\in I} A'_j$ where $A'_j:= \{(x_i)_{i\in I}: x_i=1_i \; \text{for all}\;  i\not =j\}$ and $1_i$ is the largest element of $L_i$. Then $A'$ is meet-dense and the union of $\vert I\vert$ chains. The converse inequality follows.  
\end{proof}
\begin{cor}\label{cor:dimension}
If $L$ is an algebraic lattice then: \begin{equation}
dim_{\nabla}(L)= cov(\mi_{\Delta}(L)).
\end{equation}

In particular $dim_{\nabla}(L)= n$ with $n<\infty$ if and only if $n$ is the maximum size of antichains of $\mi_{\Delta}(L)$. 
\end{cor}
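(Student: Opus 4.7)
The plan is to deduce the corollary from Theorem \ref{duality} by showing that, for an algebraic lattice $L$, the meet-dense set of smallest chain-covering-number is exactly $\mi_{\Delta}(L)$. This hinges on two observations that together pin down the value of the minimum in \eqref{eq:join-representation1}.

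First, I would verify that $\mi_{\Delta}(L)$ itself is meet-dense. This is a standard property of algebraic lattices: given $x \in L$, for every compact element $c \not\leq x$ one uses Zorn's lemma on the set $\{y \in L : y \geq x, c \not\leq y\}$ to produce a maximal such $y$; this $y$ is completely meet-irreducible, with unique upper cover $y \vee c$, and $x = \bigwedge \{y : y \in \mi_{\Delta}(L),\, y \geq x\}$ because $L$ is spatial-dually thanks to algebraicity (see, e.g., \cite{Comp}). This gives the inequality $dim_{\nabla}(L) \leq cov(\mi_{\Delta}(L))$ via Theorem \ref{duality}.

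Next I would show the reverse containment at the level of meet-dense sets: every meet-dense $A \subseteq L$ must contain $\mi_{\Delta}(L)$. Indeed, let $m \in \mi_{\Delta}(L)$ with unique upper cover $m^\ast$, and write $m = \bigwedge S$ with $S \subseteq A$. If $m \notin S$, then each $s \in S$ satisfies $s > m$, hence $s \geq m^\ast$ by complete meet-irreducibility, forcing $m = \bigwedge S \geq m^\ast$, a contradiction. Therefore $m \in A$, so $\mi_{\Delta}(L) \subseteq A$ and hence $cov(\mi_{\Delta}(L)) \leq cov(A)$. Taking the minimum over meet-dense $A$ and applying Theorem \ref{duality} gives $cov(\mi_{\Delta}(L)) \leq dim_{\nabla}(L)$, and combined with the first step we get the equality
\begin{equation*}
dim_{\nabla}(L) = cov(\mi_{\Delta}(L)).
\end{equation*}

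For the ``in particular'' clause, assume $n < \infty$. If $dim_{\nabla}(L) = n$, then $\mi_{\Delta}(L)$ is covered by $n$ chains, so every antichain has size at most $n$; conversely any antichain in $\mi_{\Delta}(L)$ gives rise to pairwise-incomparable meet-dense elements and hence a lower bound on $cov$, so Dilworth's theorem (valid for posets of finite width, as recalled before the statement of Theorem \ref{duality}) yields that $cov(\mi_{\Delta}(L))$ equals the maximum antichain size. I do not expect any real obstacle here; the only subtle point is the initial Zorn argument producing enough completely meet-irreducibles in step one, and the careful use of the defining property of $\mi_{\Delta}$ in step two.
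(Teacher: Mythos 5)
Your proposal is correct and follows essentially the same route as the paper: identify $\mi_{\Delta}(L)$ as the minimum meet-dense subset of an algebraic lattice (it is meet-dense, and every meet-dense set contains it), then apply the equality $dim_{\nabla}(L)=\min\{cov(A): A \text{ meet-dense}\}$ of Theorem \ref{duality}, with Dilworth's theorem handling the finite-width case. The only difference is that you supply the Zorn's-lemma and upper-cover details that the paper cites or states without proof, which is fine.
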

\begin{proof}
If $L$ is an algebraic lattice then $\mi_{\Delta}(L)$ is meet-dense and every meet-dense subset of $L$ contains $\mi_{\Delta}(L)$. Apply Equality (\ref{eq:join-representation1}). According to Dilworth's theorem if $n$ is the maximum size of antichains of  a poset $P$ then the covering number of $P$ is $n$. 

\end{proof}
\begin{prop}\label{prop:finite-dimension}Let $P$ be a join-semilattice with a least element. Then $dim_{\vee}(P)\leq dim_{\nabla}(\ideal  P)$. Equality holds if $dim_{\vee}(P)$ is finite. 
\end{prop}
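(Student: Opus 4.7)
The plan is to prove the two inequalities separately; the forward direction is free and holds for any $P$, while the reverse exploits the finiteness hypothesis through Lemma~\ref{lm:pdscattered}. For $dim_{\vee}(P)\leq dim_{\nabla}(\ideal P)$, fix an embedding $f:\ideal P \hookrightarrow \Pi_{i\in I}C_i$ into a product of complete chains that preserves arbitrary joins, with $|I|=dim_{\nabla}(\ideal P)$. The principal ideal map $\eta:P\to \ideal P$, $\eta(p):=\downarrow p$, is injective and preserves finite joins, because $\downarrow(p\vee q)=\downarrow p \vee \downarrow q$ in $\ideal P$ (any ideal containing $p$ and $q$ must contain $p\vee q$). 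Hence the composition $f\circ \eta:P\to \Pi_{i\in I}C_i$ is a join-preserving injection; since every injective join-preserving map is automatically an order-embedding (from $p\leq q\iff p\vee q=q$), this witnesses $dim_{\vee}(P)\leq |I|$.

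For the reverse inequality under the assumption that $n:=dim_{\vee}(P)$ is finite, start with a join-preserving embedding $g:P\hookrightarrow \Pi_{i<n}C_i$ into a product of chains and extend it to the ideal lattice by $\bar g(I):=\downarrow g(I)$, where $\downarrow$ is computed in $\Pi_{i<n}C_i$. Since $g$ preserves finite joins, $g(I)$ is up-directed in the product, so $\bar g(I)$ is indeed an ideal of $\Pi_{i<n}C_i$. A direct verification, using that both sides consist exactly of those product elements dominated by some finite join $g(p_1)\vee\cdots\vee g(p_k)$ with $p_j\in I_{\alpha_j}$, shows
\[
\bar g\Bigl(\bigvee_{\alpha} I_\alpha\Bigr)=\bigvee_{\alpha}\bar g(I_\alpha),
\]
so $\bar g$ preserves arbitrary joins. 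Injectivity follows because $g$ is an order-embedding: if $\bar g(I)=\bar g(J)$ and $p\in I$, then $g(p)\leq g(p')$ for some $p'\in J$, whence $p\leq p'$ and $p\in J$. Now apply Lemma~\ref{lm:pdscattered} to obtain $\ideal(\Pi_{i<n}C_i)\cong \Pi_{i<n}\ideal(C_i)$; each $\ideal(C_i)$ is a complete chain (initial segments of a chain, ordered by inclusion, form one), so $\bar g$ realises $\ideal P$ inside a product of $n$ complete chains by a map preserving arbitrary joins, giving $dim_{\nabla}(\ideal P)\leq n$. Combined with the forward inequality, this yields equality.

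The main obstacle, and the reason equality is claimed only in the finite case, is the finiteness hypothesis in Lemma~\ref{lm:pdscattered}: for an infinite family of chains the identification $\ideal(\Pi C_i)\cong \Pi\ideal(C_i)$ breaks down, since an ideal of the infinite product need not be a product of ideals on each coordinate. Consequently the extension $\bar g$ would still land in $\ideal(\Pi_{i\in I}C_i)$, but that codomain can no longer be decomposed into a product of complete chains indexed by $I$, and the argument above does not yield a bound on $dim_{\nabla}(\ideal P)$. The question of whether the equality persists when $dim_{\vee}(P)$ is infinite therefore remains untouched by this approach.
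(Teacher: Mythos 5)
Your proposal is correct and follows essentially the same route as the paper: the forward inequality via composing the principal-ideal embedding $p\mapsto\downarrow p$ with a join-preserving embedding of $\ideal P$ into a product of complete chains, and the reverse via extending a join-embedding $g:P\to\Pi_{i<n}C_i$ to $\bar g(I):=\downarrow g(I)$ and invoking Lemma~\ref{lm:pdscattered} to identify $\ideal(\Pi_{i<n}C_i)$ with $\Pi_{i<n}\ideal(C_i)$. The only cosmetic difference is that the paper arranges (as in its preliminary remark) for the embedding to send the least element of $P$ to the bottom of the product, so that each $\ideal(C_i)$ is genuinely a complete chain — a detail worth noting but easily supplied in your argument by adjoining bottoms to the chains.
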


\begin{proof}
For each $x\in P$ set $i(x):= \downarrow x$. The map $i$ is a  join-preserving  embedding of $P$ into $\ideal  P$. The inequality  $dim_{\vee}(P)\leq dim_{\nabla}(\ideal  P)$ follows. 
Let $Q$ be a join-semilattice with a least element and $f$ a join-preserving embedding from $P$ to $Q$ which carries the least element of  $P$ on the least element of $Q$. Set $\overline f(I):= \downarrow f(I)$. This map is an embedding of $\ideal  P$ into $\ideal  Q$ which preserves arbitrary joins. Take for  $Q$ a finite product $\Pi_{i\in I} C_i$ of chains, with $\vert I\vert =dim_{\vee}(P)$. According to Lemma \ref{lm:pdscattered} $\ideal  (\Pi_{i\in I} C_i)$ is isomorphic to $\Pi_{i\in I} \ideal  C_i$.  Hence $ dim_{\nabla}(\ideal  P)\leq \vert I\vert$, as claimed. 
\end{proof}

We illustrate the  proof of Theorem \ref{duality} in the case of algebraic lattices and then convex geometries. This allows  to precise Proposition \ref{prop:finite-dimension}.


We find convenient to view  an algebraic lattice $L$ as the lattice $C:=Cl(X, \phi)$ of an  finitary closure system $(X, \phi)$ such that $\emptyset \in C$. 

 
    Let $x,y\in X$. We set $x\leq y$ if $\phi(x) \subseteq \phi(y)$.  Let $D$ be a  chain  included in $C$. Let $x,y\in X$. We set $x\leq_D y$ if  every $I\in D$ containing $y$ contains $x$. Trivially, 
the relation $\leq_D$ is a total quasi-order on $X$ which contains the order $\leq$ and every $A\in D$ is an initial segment of $(X, \leq_D)$. For every subset $A$ of $X$ we set $\downarrow A:=\{y\in X: y\leq x \; \text {for some}\; x\in A \}$ and $\downarrow_{D} A:= \{y\in X: y\leq_D x \; \text {for some}\;  x\in A\}$. 
\begin{lm}\label{Ci}
Let  $D$ be a maximal chain of the lattice $C$ of an  algebraic closure $(X, \phi)$  then:
\begin{enumerate}
\item  $D=I(X, \leq_D)$, the lattice of initial segments of the quasi-ordered set $(X, \leq_D)$. 

\item  For every  $A\subseteq X$, $\downarrow_{D} A$ is the least member of $D$ containing $A$. 

\item  The map $\ell_D: C\rightarrow D$ defined by setting $\ell_D(A):=\downarrow_{D} A$  preserves arbitrary joins and fixes $D$ pointwise.

\item $\ell_D (A)$ is compact in $D$ for every compact $A$ of $C$. 
\end{enumerate}
\end{lm}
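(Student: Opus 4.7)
The plan rests on one auxiliary object: for each $y \in X$, put $\alpha(y) := \bigcap \{I \in D : y \in I\}$. The first thing I would establish is that $D$, being a maximal chain in the complete lattice $C$, is actually a complete sublattice of $C$: for any $S \subseteq D$, both $\bigvee_C S$ and $\bigwedge_C S$ are comparable with every element of the chain $D$, so maximality forces them into $D$. Since $X \in D$ (as $X$ is comparable with everything), each $\alpha(y)$ is a non-empty intersection of members of $D$, hence $\alpha(y) \in D$; and unpacking the definition of $\leq_D$ one sees that $x \leq_D y$ iff $x \in \alpha(y)$, so $\alpha(y)$ is the principal initial segment of $(X, \leq_D)$ generated by $y$, and also the least member of $D$ containing $y$.

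With these preparations (1), (2), and (3) become essentially formal. For (1), the inclusion $D \subseteq I(X, \leq_D)$ is by the very definition of $\leq_D$, and conversely any $J \in I(X, \leq_D)$ satisfies $\alpha(y) \subseteq J$ for every $y \in J$ and $y \in \alpha(y)$, so $J = \bigcup_{y \in J} \alpha(y)$, a union of members of $D$, hence $J \in D$. For (2), $\downarrow_D A = \bigcup_{x \in A} \alpha(x)$ lies in $D$ by the sublattice property, contains $A$, and is minimal with these properties because any member of $D$ containing $A$ must contain every $\alpha(x)$. For (3), $\ell_D$ clearly fixes $D$ pointwise, and for $S \subseteq C$ I would verify $\ell_D(\bigvee_C S) = \bigcup_{A \in S} \ell_D(A)$ by noting that a member of $D$ contains $\bigvee_C S = \phi(\bigcup S)$ iff it contains every $A \in S$ iff it contains every $\ell_D(A)$; the right-hand side then also equals $\bigvee_D \ell_D(S)$ since joins in the complete sublattice $D$ agree with joins in $C$.

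The serious work is (4). Given compact $A = \phi(F)$ with $F \subseteq X$ finite, one computes $\ell_D(A) = \bigcup_{x \in F} \alpha(x)$, and since $D$ is a chain this finite union collapses to $\alpha(x_0)$ for some $\leq_D$-maximal $x_0 \in F$. To show $\alpha(x_0)$ is compact in the complete chain $D$, it suffices to prove $\alpha(x_0)$ covers $U := \bigvee_D \{I \in D : I < \alpha(x_0)\}$ in $D$. If instead $U = \alpha(x_0)$, then $x_0 \in U = \phi\bigl(\bigcup \{I : I < \alpha(x_0)\}\bigr)$; the finitary hypothesis on $\phi$ yields a finite $F' \subseteq \bigcup \{I : I < \alpha(x_0)\}$ with $x_0 \in \phi(F')$, and since $D$ is a chain the finitely many members of $D$ needed to cover $F'$ admit a maximum $I_0 < \alpha(x_0)$; then $x_0 \in \phi(F') \subseteq \phi(I_0) = I_0$, contradicting the definition of $\alpha(x_0)$. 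This is the only place where algebraicity enters decisively; the rest of the lemma is purely order-theoretic and flows from the complete-sublattice character of a maximal chain together with the description of $\leq_D$ through the family of $\alpha(y)$'s.
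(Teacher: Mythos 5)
Your argument is essentially the paper's: maximality of $D$ gives closure under arbitrary joins and meets of $C$, the principal segments $\alpha(y)=\downarrow_D\{y\}$ are the least members of $D$ containing $y$, $\ell_D$ is the "least member of $D$ above" retraction (this is how the paper proves (3), inside Theorem \ref{duality}), and for (4) you first reduce, as the paper does, to $\ell_D(A)=\ell_D(F)=\alpha(x_0)$ for the $\leq_D$-largest element $x_0$ of a finite generating set $F$, and then verify compactness in the complete chain $D$ via the cover of $\bigvee_D\{I\in D: I<\alpha(x_0)\}$; that last verification is correct and is just a more explicit version of the paper's one-line conclusion.

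There is, however, one under-justified step, and it matters because you explicitly claim that algebraicity ``enters decisively'' only in (4). In your proofs of (1) and (2) you pass from ``$J=\bigcup_{y\in J}\alpha(y)$ is a union of members of $D$'' to ``$J\in D$''. What your complete-sublattice observation gives you is only that the \emph{join} $\phi\bigl(\bigcup_{y\in J}\alpha(y)\bigr)$ lies in $D$; to conclude that $J$ itself lies in $D$ you must know that this union is already closed, and that is exactly where the finitarity of $\phi$ is needed: the family $\{\alpha(y): y\in J\}$ is a subfamily of the chain $D$, hence up-directed, and unions of up-directed families of closed sets are closed when $\phi$ is finitary. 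Without this hypothesis statement (1) is simply false: on $X=[0,1]$ take the (non-finitary) closure system whose closed sets are $\emptyset$ and the intervals $[0,a]$; it is itself a maximal chain, $\leq_D$ is the usual order on $[0,1]$, yet the initial segments $[0,a)$ are not closed, so $D\neq I(X,\leq_D)$. The paper invokes algebraicity at precisely this point (``Since $D$ is a maximal chain of $C$ and $Cl(X,\phi)$ is algebraic, $D$ is closed under union''). Once you insert this one-line justification (and delete the remark that (1)--(3) are purely order-theoretic), your proof is complete and follows the same route as the paper's.
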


\begin{proof}
$(1)$. This is well known. For reader's convenience we give  a proof. As mentioned above, $D\subseteq I(X, \leq_D)$. Let $I\in I(X, \leq_D)$. Since $D$ is maximal, $\emptyset$ and $X$ belong to $D$, hence we may suppose $I$ distinct of $\emptyset$ and $X$. Let $y\not \in I$ and let $x\in I$ such that $x<_Dy$ (that is  $x\leq_D y$ and $y \not \leq_D x$). By definition of $\leq _D$ there is some $I_{x,y}\in D$ such that $x\in I_{x, y}$ and $y\not \in I_{x,y}$. Since $D$ is a maximal chain of $C$ and $Cl(X, \phi)$ is algebraic, $D$ is closed under union. Hence, $I_y:= \bigcup_{\{x\in I  : x<_D y\}} I_{x,y}$  belongs to $D$.  Again, since $D$ is a maximal chain of $C$, $D$ is closed under intersection.  Hence $I=\bigcap_{y\not \in I} I_y \in D$.  
$(2)$. Follows immediately from $(1)$. 
$(3)$. The fact that the map $\ell_D$ preserves arbitrary joins  
 was already indicated in  the proof of Theorem \ref{duality}. 
 $(4)$. Let $A\in K(C)$. Let $F$ be a finite subset of $A$ such that $\phi(F)=A$. Since $\phi(F)$ and $\ell_D(F)$ are the least member of $C$, respectively of $D$,   containing $F$, we have $\phi(F) \subseteq \ell(D)$. This yields $\ell_D(\phi(F))\subseteq \ell_D(\ell_D(F))$, that is $\ell_D(A)\subseteq \ell_D(F)$. Since $F\subseteq \ell_D(A)$, we obtain $\ell_D(A)=\ell_D(F)$. Hence $\ell_D(A)\in K(D)$. 
 
 \end{proof}

Let $C$ be a set; we set $C_{*}:=C\setminus \{\emptyset\}$ and $\hat C:=C\cup \{\emptyset\}$. 
Let $\{C_k:k\in K\}$ be  a family of closure systems  $C_k$ on the same set $X$.  The \emph{join} of this family denoted by $\bigvee_{k\in K} C_k$ is the closure system on $E$ with closed sets $A:=\bigcap_{k\in K} A_k$, where $A_k\in C_k$ for each $k\in K$. We give  in $(4)$ of Lemma \ref{join} below a  presentation of the join-semilattice $K(C)$ of compact elements of a closure $C$ when $C$ is the join of finitely many chains $C_k$, $k<n$, and the $C_k$'s are maximal in $C$.

\begin{lm}\label{join}
Let $C:=Cl(X, \phi)$,  where $\phi$ is algebraic,    $M:=\mi_{\nabla}(C)$ and $(C_k)_{k\in K}$ be   a family of chains of $C$. Then, the union of the $C_k$'s   covers $M$ iff 
$C= \bigvee_{k\in K} C_k$,  the  join of  the closures $C_k$'s. Furthermore, if one of these conditions holds and the $C_k$ are maximal in $C$ then: 
\begin{enumerate}
\item The quasi-order $\leq$ is the intersection of the total quasi-orders $\leq_{C_k}$'s; 
\item For every 
 $I\in C$, $I= \bigcap_{k\in K} \ell_{k}(I)$ where $\ell_{k}(A):= \downarrow_{C_k}A$ for every $A\subseteq X$;  and  
 \item The map $\hat \ell$ defined by setting  $\hat \ell(A):= (\ell_{k}(A))_{k\in K}$  for every non-empty $A\subseteq X$ and $\hat\ell (\emptyset):= \emptyset$ induces a one to one map from  $C$ into  $\widehat {\Pi_{k\in K}C_{k *}}$ which preserves arbitrary joins. 
\item If $K$ is finite, $K:= \{0, \dots n-1\}$,  then $\hat\ell$ induces  a one-to one join-preserving map from $K(C)$ into $\widehat {\Pi_{k<n} K(C_k)_{*}}$. 
Furthermore, the image of $K(C)_{*}$ is the join-semilattice of $\Pi_{k<n} K(C_k)_{*}$ generated by the image of the set $K(C)_{1}:= \{\phi(x): x\in X\}$. 

\end{enumerate}
\end{lm}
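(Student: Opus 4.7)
The plan is to use the meet-density of $M:=\mi_{\nabla}(C)$ in the algebraic lattice $C$ as the common backbone, with Lemma \ref{Ci} supplying the structural facts about maximal chains, and to handle the four items of the conclusion in sequence after first settling the equivalence.

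For the equivalence, the direction $C=\bigvee_{k}C_{k}\Rightarrow M\subseteq\bigcup_{k}C_{k}$ is immediate: any $m\in M$ can be written as $\bigcap_{k}A_{k}$ with $A_{k}\in C_{k}$, all of which lie above $m$, so complete meet-irreducibility of $m$ in $C$ forces some $A_{k}$ to equal $m$. For the converse, given $I\in C$ I would use meet-density to write $I=\bigcap\{m\in M:I\subseteq m\}$, choose for each such $m$ an index $k_{m}$ with $m\in C_{k_{m}}$, and group the factors by index to obtain $I=\bigcap_{k}B_{k}$ with $B_{k}$ a meet of elements of $C_{k}$. Since chains in $C$ embed in maximal chains which, by Lemma \ref{Ci}(1), are closed under arbitrary intersections, one may assume each $C_{k}$ is complete under meets, conclude $B_{k}\in C_{k}$, and obtain $I\in\bigvee_{k}C_{k}$.

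Assuming henceforth that the $C_{k}$ are maximal, items (1) and (2) run in parallel. The inclusions $\leq\,\subseteq\,\bigcap_{k}\leq_{C_{k}}$ and $I\subseteq\bigcap_{k}\ell_{k}(I)$ are direct from the definitions. Each converse uses the same device: a failure $x\not\leq y$, respectively $z\notin I$, produces via meet-density of $M$ a witness $m\in M$ with $\phi(y)\subseteq m\not\ni x$, respectively $I\subseteq m\not\ni z$; by the already proved equivalence this $m$ lies in some $C_{k}$, contradicting $x\leq_{C_{k}}y$, respectively (by minimality of $\ell_{k}(I)$ in $C_{k}$ above $I$) the assumption $z\in\ell_{k}(I)$.

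For (3), injectivity of $\hat\ell$ is immediate from (2), and preservation of arbitrary joins transfers componentwise from Lemma \ref{Ci}(3) applied to each maximal chain $C_{k}$; the target $\widehat{\Pi_{k\in K}C_{k*}}$ is the correct codomain because $\ell_{k}(A)\neq\emptyset$ whenever $A\neq\emptyset$, while $\emptyset\mapsto\emptyset$ by fiat. Item (4) specializes (3) to compacts via Lemma \ref{Ci}(4): for $A\in K(C)$ one gets $\ell_{k}(A)\in K(C_{k})$; finite join-preservation is inherited; and the image-generation claim reduces to the observation that every $A\in K(C)_{*}$ has the form $\phi(F)=\bigvee_{x\in F}\phi(x)$ for some finite non-empty $F\subseteq X$, so $\hat\ell(A)$ is a finite join of elements of $\hat\ell(K(C)_{1})$. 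The main obstacle I anticipate is the bookkeeping in the converse of the equivalence---ensuring each $B_{k}$ is a \emph{bona fide} element of $C_{k}$---which is why the passage to completions inside maximal chains via Lemma \ref{Ci}(1) is the decisive technical move.
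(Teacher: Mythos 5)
Your proposal is correct and takes essentially the same route as the paper: the equivalence comes from meet-density of $M$ in the algebraic lattice $C$ plus complete meet-irreducibility, and items (1)--(4) are obtained componentwise from Lemma \ref{Ci} exactly as in the text. The only deviations are cosmetic: you prove the converse inclusions in (1) and (2) via completely meet-irreducible witnesses in some $C_k$ rather than directly from a representation $I=\bigcap_{k}A_k$ with $A_k\in C_k$, and your reduction to intersection-closed (maximal) chains in the direction ``$M$ covered $\Rightarrow C=\bigvee_k C_k$'' is harmless here since the ``Furthermore'' items assume the $C_k$ maximal anyway, which is the only setting in which the lemma is used.
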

\begin{proof} Since $C$ is algebraic, $M$ is meet-dense. Hence, if the union of the $C_k$'s   covers $M$, then  $C= \bigvee_{k\in K} C_k$. Conversely, let $I\in M$. Since  $C= \bigvee_{k\in K} C_k$,  $\bigcup_{k\in K} C_k$ is meet-dense;  since $I$ is completely meet-irreducible, $I\in \bigcup_{k\in K} C_k$.  

$(1)$. The intersection of the quasi-orders $\leq_{C_k}$ contains $\leq$. Let $x,y\in X$ such that $x\not \leq y$.  By definition of $\leq$ there is some $I\in C$ such that $x\not \in I $ and $y\in I$. If  $I\in \bigvee_{k\in K} C_k$, there is  some $C_k$ and some $I_k\in C_k$ such that $x\not \in I_k $ and $I\subseteq I_k$, hence $x\not \leq_{C_k}y$.

 $(2)$ By Lemma \ref{Ci}, $\ell_{k}(I)$ is the least member of $C_k$ containing $I$. Since $C= \bigvee_{k\in K} C_k$,  we have $I= \bigcap_{k\in K} \ell_{k}(I)$. 
 
 $(3)$. As indicated in the proof of Theorem \ref{duality},  each map $\ell_k$ preserves arbitrary joins. Hence, the map $\ell := (\ell_k)_{k<n}$ from $C$ into the direct product $\Pi_{k\in K}C_{k}$  preserves arbitrary joins. 
Due to $(2)$ above,  the map $\ell$ is one to one. Since $\phi(I)\not=\emptyset$ amounts to $\ell_k(I)\not=\emptyset$ for every $k<n$, $\ell$ maps  $C_{*}$ into ${\Pi_{i<n}C_k)_{*}}$, hence $\hat\ell$ has the property stated. 

$(4)$. According to $(3)$ and $(4)$ of Lemma \ref{Ci}, $\ell_k$ induces a join-preserving map from $K(C)$ onto $K(C_k)$. According to $(3)$ above  $\ell$ is one-to-one and join-preserving, hence $\hat\ell$ has the property stated. Finally, observe that  $K(C)_{*}$ is generated as a  join-semilattice by $K(C)_{1}$; since  $\ell$ is join-preserving, the image of $K(C)_{*}$ is the join-semilattice of $\Pi_{k<n} K(C_k)_{*}$ generated by the image of $K(C)_{1}$.
\end{proof}
 
 We describe in Theorem \ref{thm:algebraicconvex} below algebraic convex geometries with finite join-dimension. 
 
 We start with three simple lemmas. 
 \begin{lm}\label{lm:convex}
If $C_k$, $k \in K$, are convex geometries, then $C:=\bigvee_{k\in K} C_k$ is a convex geometry as well. \end{lm}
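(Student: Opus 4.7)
The plan is to verify the two defining properties of a convex geometry for $C = \bigvee_{k \in K} C_k$: that $\emptyset$ is $\phi$-closed and that the anti-exchange axiom holds, where $\phi$ denotes the closure operator of $C$.

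Before anything else, I would extract an explicit formula for $\phi$, namely $\phi(Y) = \bigcap_{k \in K} \phi_k(Y)$ for every $Y \subseteq X$, where $\phi_k$ is the closure operator of $C_k$. The upper bound holds because $\bigcap_k \phi_k(Y)$ lies in $C$ by the definition of the join (take $A_k := \phi_k(Y)$) and contains $Y$; the lower bound because any member $\bigcap_k A_k$ of $C$ containing $Y$ satisfies $A_k \supseteq \phi_k(Y)$ for each $k$. In particular, $\phi(\emptyset) = \bigcap_k \phi_k(\emptyset) = \emptyset$, since every $C_k$ is a convex geometry, and $A \in C$ if and only if $A = \bigcap_k \phi_k(A)$.

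For the anti-exchange axiom, suppose $A \in C$ is closed, $x \notin A$, $y \neq x$, and $x \in \phi(A \cup \{y\})$; I want to conclude $y \notin \phi(A \cup \{x\})$. The key observation is that $x \notin A = \bigcap_k \phi_k(A)$ forces the existence of some index $k_0$ with $x \notin \phi_{k_0}(A)$. Set $A' := \phi_{k_0}(A)$, which is $\phi_{k_0}$-closed, contains $A$, and still omits $x$. Idempotence of $\phi_{k_0}$ yields $\phi_{k_0}(A' \cup \{y\}) = \phi_{k_0}(A \cup \{y\}) \ni x$, so the anti-exchange axiom for $C_{k_0}$ applied to the closed set $A'$ gives $y \notin \phi_{k_0}(A' \cup \{x\})$. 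Finally, from $A \subseteq A'$ we get $\phi_{k_0}(A \cup \{x\}) \subseteq \phi_{k_0}(A' \cup \{x\})$, so $y \notin \phi_{k_0}(A \cup \{x\})$, and hence $y \notin \bigcap_k \phi_k(A \cup \{x\}) = \phi(A \cup \{x\})$.

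The argument is essentially a coordinate-wise reduction, and I do not expect any substantive obstacle. The only point requiring care is that $A$, though closed in the join $C$, need not be closed in any individual $C_k$, so one must pass to the larger hull $A' = \phi_{k_0}(A)$ before invoking anti-exchange locally; the choice of $k_0$ as an index witnessing $x \notin \phi_{k_0}(A)$ is exactly what makes this enlargement safe.
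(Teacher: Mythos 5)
Your proof is correct and follows essentially the same route as the paper's: identify the closure of the join as $\phi(Y)=\bigcap_{k}\phi_k(Y)$, pick an index $k_0$ with $x\notin\phi_{k_0}(A)$, apply anti-exchange in $C_{k_0}$ at the enlarged closed set $\phi_{k_0}(A)$, and pull the conclusion back via monotonicity. The only difference is cosmetic: you name the set $A'=\phi_{k_0}(A)$ and also record zero-closedness explicitly, which the paper leaves implicit.
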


\begin{proof} Let $D$ be  a maximal chain of $C$ and $\phi_D$ the corresponding closure (namely $\phi_D(A)$ is the least member of $D$ containing $A$). Let $Y\subseteq X$ and $x\not= y  \in X\setminus \phi_D(Y)$. 
$(b)$.  The closure operator associated with $C$ is defined as $\phi(Y):=\bigcap_{k\in K} \phi_k(Y)$, for any $Y\subseteq X$, where $\phi_k$ is the closure operator associated with $C_k$.
It is enough to show that $\phi$ satisfies the anti-exchange axiom. Take $x\not = y$, so that $x,y
\not \in X=\phi(X)$ and $x \in \phi(X\cup \{y\})$. Then $x \not \in \phi_k(X)$, for some $k \in K$, but $x \in \phi_k(X\cup \{y\})\subseteq \phi_k(\phi_k(X)\cup \{y\})$. Since $\phi_k$ satisfies the anti-exchange axiom, we have $y \not \in 
\phi_k(\phi_k(X)\cup \{x\})$. Then $y \not \in \phi_k(X\cup \{x\})$, thus $y \not \in \phi(X\cup \{x\})$.
\end{proof}

\begin{lm}\label{lm: linearconvex}
If $C$ is an algebraic  convex geometry then every maximal chain $D$ is a convex geometry, in fact $D= I(X, \leq_D)$ for some linear order $\leq_D$ on $X$. \end{lm}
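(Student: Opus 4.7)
The plan is to observe that Lemma~\ref{Ci}(1) already gives $D=I(X,\leq_D)$ for the total quasi-order $\leq_D$, so the only new content is to show that $\leq_D$ is actually antisymmetric once $(X,\phi)$ is a convex geometry. Once antisymmetry is established, $\leq_D$ is a linear order, and the map $A\mapsto\; \downarrow_D\! A$ is a closure operator on $X$ trivially satisfying the anti-exchange axiom whose lattice of closed sets is $D$; this yields both assertions of the lemma simultaneously.

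To prove antisymmetry, I will argue by contradiction: suppose there exist $x\neq y$ in $X$ with $x\leq_D y$ and $y\leq_D x$, equivalently $x$ and $y$ lie in exactly the same members of $D$. Since $D$ is a maximal chain in the algebraic lattice $C$, it is closed under arbitrary intersections and unions (as in the proof of Lemma~\ref{Ci}). Hence I can set $I_0:=\bigcap\{I\in D:x\in I\}\in D$, the least element of $D$ containing $x$ (and equivalently $y$), and $J:=\bigcup\{I\in D:I\subsetneq I_0\}\in D$. By minimality of $I_0$, neither $x$ nor $y$ lies in $J$, and $J\subsetneq I_0$.

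The key step is to show that $\phi(J\cup\{x\})=I_0$, and symmetrically $\phi(J\cup\{y\})=I_0$. Set $A:=\phi(J\cup\{x\})$. Plainly $J\subsetneq A\subseteq I_0$ since $x\in A\setminus J$ and $I_0$ is closed. If $A\subsetneq I_0$, then I claim $A$ is comparable with every $I\in D$: as $D$ is a chain, each $I\in D$ is comparable with $I_0$, so either $I\supseteq I_0\supsetneq A$, or $I=I_0\supsetneq A$, or $I\subsetneq I_0$, in which case $I\subseteq J\subsetneq A$ by the very definition of $J$. Moreover $A\notin D$ because any element of $D$ strictly between $J$ and $I_0$ would have to be $\subseteq J$ by the definition of $J$, contradicting $A\supsetneq J$. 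So $D\cup\{A\}$ would be a chain of closed sets strictly containing $D$, contradicting the maximality of $D$. Hence $A=I_0$, as claimed.

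Finally, apply the anti-exchange axiom to the closed set $J$ and the distinct elements $x,y\in X\setminus J$: from $x\in\phi(J\cup\{y\})=I_0$ one concludes $y\notin\phi(J\cup\{x\})=I_0$, which contradicts $y\in I_0$. Therefore $\leq_D$ is antisymmetric, hence a linear order, and the lemma follows. The only nontrivial ingredient is the maximality-of-$D$ argument that forces $\phi(J\cup\{x\})$ to jump all the way up to $I_0$; the remaining steps are bookkeeping with the definitions of $\leq_D$, $I_0$, and $J$, together with the closure properties of maximal chains in algebraic lattices already recorded in Lemma~\ref{Ci}.
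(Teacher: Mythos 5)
Your proposal is correct and follows essentially the same route as the paper: invoke Lemma~\ref{Ci}(1) to get $D=I(X,\leq_D)$ for a total quasi-order, then refute a violation of antisymmetry by forcing both $\phi(J\cup\{x\})$ and $\phi(J\cup\{y\})$ up to the least member of $D$ containing $x$ and contradicting the anti-exchange axiom (your $J$ and $I_0$ are exactly the paper's $A$ and $A'=\downarrow_D x$). The only difference is that you spell out the step the paper leaves implicit, namely that maximality of $D$ makes the cover $J\prec I_0$ a cover in all of $Cl(X,\phi)$, which is a welcome clarification rather than a deviation.
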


\begin{proof} By $(1)$ of Lemma \ref{Ci}, we have  $C_k= I(X, \leq_k)$ for some  total quasi-order $\leq_k$. Let $x\not = y\in X$ such that $x\leq_D y$ and $y\leq_Dx$. Since $C$ is algebraic, there is a largest member $A$ of $C_k$ not containing $x$; in fact $A=\{z\in X: z\leq_D x \; \text{and} x\not \leq_D z\}$. Since $D$ is maximal, $A':=\downarrow_D x$ is a cover of $A$ in $D$. Since $A\subset  \phi(A\cup \{x\}) \subseteq A'$  we have $\phi(A\cup \{x\})=A'$ Similarly, we have $\phi(A\cup \{y\})= A'$. This contradicts the fact that $C$ is a convex geometry. \end{proof}

\begin{lm}\label{lm:algebraic}
Suppose that  $K$ is finite. If   each $C_k$ is algebraic, then $C:=\bigvee_{k\in K} C_k$ is algebraic.
\end{lm}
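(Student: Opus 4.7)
The plan is to show directly that the closure operator $\phi$ associated with $C := \bigvee_{k\in K} C_k$ is finitary, by first identifying it as a pointwise intersection and then using the hypothesis that $K$ is finite.

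First I would establish the following identity: for every $Y \subseteq X$,
\[
\phi(Y) \;=\; \bigcap_{k\in K} \phi_k(Y),
\]
where $\phi_k$ denotes the closure operator of $C_k$. The inclusion $\phi(Y) \subseteq \bigcap_k \phi_k(Y)$ holds because the set on the right is an intersection of sets closed in the $C_k$'s, hence closed in $C$ by definition of the join closure system, and it contains $Y$. Conversely, if $\phi(Y) = \bigcap_k A_k$ with $A_k \in C_k$, then each $A_k$ contains $Y$ and is $\phi_k$-closed, so $A_k \supseteq \phi_k(Y)$, giving $\phi(Y) \supseteq \bigcap_k \phi_k(Y)$.

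Next I would use this identity together with finiteness of $K$ to verify the finitary property. Let $x \in \phi(Y)$. Then $x \in \phi_k(Y)$ for every $k\in K$, and since each $\phi_k$ is finitary there exists a finite subset $Y_k \subseteq Y$ with $x \in \phi_k(Y_k)$. Because $K$ is finite, the set $Y' := \bigcup_{k\in K} Y_k$ is still a finite subset of $Y$. Monotonicity of each $\phi_k$ gives $x \in \phi_k(Y_k) \subseteq \phi_k(Y')$ for every $k$, hence
\[
x \;\in\; \bigcap_{k\in K} \phi_k(Y') \;=\; \phi(Y').
\]
This shows $\phi(Y) = \bigcup\{\phi(Y') : Y' \subseteq Y,\ |Y'|<\omega\}$, i.e., $\phi$ is finitary and $C$ is algebraic.

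The argument has no real obstacle; the only point worth being careful about is the first identity, since one might worry that the naive pointwise intersection of closure operators need not be idempotent. Idempotence is rescued here because closed sets of $C$ are \emph{defined} as arbitrary intersections of closed sets from the $C_k$'s, so $\bigcap_k \phi_k(Y)$ already lies in $C$ and thus coincides with $\phi(Y)$. The finiteness of $K$ is then used in exactly one place, namely to guarantee that a finite union of finite witnesses remains finite; this is where the hypothesis is essential, as the conclusion would fail if the $\phi_k$ required arbitrarily large finite subsets indexed by infinitely many $k$.
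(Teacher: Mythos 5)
Your proof is correct and matches the paper's (implicit) intent: the identity $\phi(Y)=\bigcap_{k\in K}\phi_k(Y)$ you establish is exactly the description of the join closure that the paper invokes without proof in Lemma \ref{lm:convex}, and your finite-union-of-witnesses step is the standard use of the finiteness of $K$ to show $\phi$ is finitary, hence $Cl(X,\phi)$ algebraic. The paper states Lemma \ref{lm:algebraic} without proof, and your argument supplies precisely the routine verification it leaves to the reader.
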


\begin{df}
A \emph{multichain} is a relational structure  $M:=(X, (\mathcal L_k)_{k=1, n})$ where each $\mathcal L_k$ is a linear order  on the set $X$. If $n=2$ this is a \emph{bichain}. The \emph{components} of $M$ are the chains $C_k:= (X, \mathcal L_k)$ for $k:=1, n$. 
\end{df}
 
We denote by $C(M)$ the join $\bigvee_{i=1,n}I(C_k)$. 

For finite convex geometries, see this result in Edelman and Jamison \cite{EdJa}. It is not generally true in case of infinite dimension, see Wahl \cite{W01} and Adaricheva \cite{A14}.

\begin{thm}\label {thm:algebraicconvex}
A closure system $(X, \phi)$ is an algebraic convex geometry with finite dimension iff $Cl(X,\phi)=C(M)$ for some multichain $M$. 
\end{thm}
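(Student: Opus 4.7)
The plan is to split the equivalence into the two obvious directions, each handled by stitching together the lemmas already established in this section. I interpret \emph{finite dimension} as $dim_{\nabla}(L)<\infty$ for $L:=Cl(X,\phi)$, which is the notion suited to algebraic lattices in view of Corollary \ref{cor:dimension}.

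For the direction $(\Leftarrow)$, I would start with a multichain $M=(X,(\mathcal L_k)_{k=1,n})$ and its components $C_k:=(X,\mathcal L_k)$. Each $I(C_k)$ is an algebraic convex geometry, since the lattice of initial segments of a chain is a standard example of both. Lemma \ref{lm:convex} then gives that the join $C(M)=\bigvee_{k=1,n} I(C_k)$ is again a convex geometry, and Lemma \ref{lm:algebraic}, applicable precisely because $n$ is finite, gives that $C(M)$ is algebraic. For the dimension bound, part $(3)$ of Lemma \ref{join} provides a join-preserving injection of $C(M)$ into the product of the $n$ complete chains $I(C_k)$, so $dim_{\nabla}(C(M))\leq n<\infty$.

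For the direction $(\Rightarrow)$, assume $L=Cl(X,\phi)$ is an algebraic convex geometry with $dim_{\nabla}(L)=n<\infty$. By Corollary \ref{cor:dimension}, the set $\mi_{\Delta}(L)$ is the union of $n$ chains $E_1,\dots,E_n$ of $L$. I would extend each $E_k$ to a maximal chain $D_k$ of $L$ via Zorn's lemma; the union $\bigcup_k D_k$ still covers $\mi_{\Delta}(L)$. By part $(1)$ of Lemma \ref{Ci}, each $D_k$ has the form $I(X,\leq_{D_k})$ for some total quasi-order $\leq_{D_k}$ on $X$. The key input is then Lemma \ref{lm: linearconvex}: because $L$ is a convex geometry, each quasi-order $\leq_{D_k}$ is in fact a linear order $\mathcal L_k$. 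Since the $D_k$'s cover $\mi_{\Delta}(L)$, the first equivalence in the statement of Lemma \ref{join} yields $L=\bigvee_{k=1,n} D_k=\bigvee_{k=1,n} I(X,\mathcal L_k)=C(M)$, where $M:=(X,(\mathcal L_k)_{k=1,n})$ is the sought multichain.

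The crux of the argument is Lemma \ref{lm: linearconvex}: without the convex-geometry hypothesis one would only obtain a representation via total quasi-orders rather than genuine linear orders, so the object produced would not literally be a multichain. A secondary but essential point is the finiteness of $n$, which is what allows both Lemma \ref{lm:algebraic} and parts $(3)$--$(4)$ of Lemma \ref{join} to yield an \emph{algebraic} join and a join-dimension bound; this is the same obstruction that is known to prevent the statement from extending to infinite dimension, as alluded to by the remark on Wahl and Adaricheva just before the theorem.
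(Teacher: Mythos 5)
Your proposal is correct and follows essentially the same route as the paper: the forward direction via Lemma \ref{lm:convex} and Lemma \ref{lm:algebraic}, and the converse by covering $\mi_{\Delta}(L)$ with $n$ chains (Corollary \ref{cor:dimension}), extending to maximal chains so that $L=\bigvee_k D_k$ via Lemma \ref{join}, and invoking Lemma \ref{lm: linearconvex} to replace quasi-orders by linear orders. The only difference is that you spell out the dimension bound and the covering argument in more detail than the paper's terse proof.
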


\begin{proof}
Let  $M:= (X, (\mathcal L_k)_{k=1, n})$. For each $k:=1, n$, the set $D_k:=I(C_k)$  of initial segments of $C_k$ forms an algebraic convex geometry. Hence, from  Lemma \ref{lm:convex} and Lemma \ref{lm:algebraic},   $C(M)$ is an algebraic convex geometry of dimension a most $n$. Conversely, if $(X, \phi)$ is an algebraic convex geometry of dimension at most $n$, then there is  a family  $(D_k)_{k=1, n}$ of maximal chains of $Cl(X,\phi)$ such that $Cl(X,\phi)= \bigvee_{k=1, n} D_k$. By  Lemma  \ref {lm: linearconvex},  $D_k=I(X, \leq_k)$  for some linear order $ \leq_k$ on $X$, hence $Cl(X,\phi)= C(M)$ for $M:= (X, (\leq_k)_{k=1, n})$. 
\end{proof}

Let $M:=(X, (\mathcal L_k)_{k=1, n})$ be a multichain, $C_k:= (X, \mathcal L_k)$ be its components  and $L:=\Pi_{i <n}C_k$. Let $\delta : X \rightarrow L$, be \emph{the diagonal mapping} defined by  $\delta(x)=\langle x,\dots, x)\rangle$, $\delta (X)$ be its range, let $\Delta(L)$ be the join-semilattice of $L$ generated by $\delta(X)$ and $\hat \Delta(L)$ be the join-semilattice obtained by adding a least element, say $\{0\}$, to $\Delta(L)$.

\begin{thm}\label{thm:compact} Let $M$ be  a multichain and   $C:= C(M)$ then $K(C)$ is isomorphic to $\hat \Delta(L)$ via a join-preserving map.
\end{thm}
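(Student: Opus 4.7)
The plan is to exhibit the isomorphism explicitly and verify it by direct unpacking. For each finite non-empty $F\subseteq X$ I would set
\[
\alpha(\phi(F)) \;:=\; (\max_{\mathcal{L}_k} F)_{k=1,\ldots,n}\;\in\; L,
\]
and send $\emptyset\in K(C)$ to the adjoined bottom of $\hat\Delta(L)$; I expect $\alpha$ to be the required join-preserving bijection $K(C)\to \hat\Delta(L)$.

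The main steps of the verification are the following. Using Theorem \ref{thm:algebraicconvex} one has $\phi(F)=\bigcap_{k=1}^n \downarrow_{\mathcal{L}_k} F = \bigcap_{k=1}^n\downarrow_{\mathcal{L}_k}(\max_{\mathcal{L}_k}F)$, so the tuple of maxima determines $\phi(F)$ uniquely; this makes $\alpha$ well-defined and injective. Next, since $\phi(F_1)\vee\phi(F_2)=\phi(F_1\cup F_2)$ while $\max_{\mathcal{L}_k}(F_1\cup F_2)$ is the join in the chain $C_k$ of $\max_{\mathcal{L}_k}F_1$ and $\max_{\mathcal{L}_k}F_2$, the map $\alpha$ preserves finite joins. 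Finally, $\alpha(\phi(x))=\delta(x)$ for every $x\in X$ and every compact element of $C$ is a finite join of singletons, so the image of $K(C)_*$ is precisely the sub-join-semilattice of $L$ generated by $\delta(X)$, namely $\Delta(L)$; together with $\alpha(\emptyset)$ this yields $\alpha(K(C))=\hat\Delta(L)$.

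A more economical route is to invoke Lemma \ref{join}(4) for the decomposition $C=\bigvee_{k=1}^n I(C_k)$ supplied by Theorem \ref{thm:algebraicconvex}. The conclusion then reads off after the identification $K(I(C_k))_*\cong C_k$ via $\downarrow_{\mathcal{L}_k}x \leftrightarrow x$: under it, $\Pi_{k<n} K(I(C_k))_*$ becomes $L$, the image of $\phi(x)$ becomes $\delta(x)$, and the image of $K(C)_1 = \{\phi(x):x\in X\}$ becomes $\delta(X)$, so Lemma \ref{join}(4) identifies the image of $K(C)_*$ with the sub-join-semilattice of $L$ generated by $\delta(X)$, i.e.\ $\Delta(L)$.

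The only step requiring attention — the mild obstacle for this second route — is checking that each $I(C_k)$ is a \emph{maximal} chain of $C$, which is needed in order to apply Lemma \ref{join}(4) and to legitimize the explicit description of $K(I(C_k))_*$ inside $C$. This holds because the initial segments of the linear order $\mathcal{L}_k$ already form a maximal chain in $2^X$, and each such initial segment is $\phi$-closed: from $\phi=\bigcap_j \phi_j$, every $\phi_k$-fixed set is automatically $\phi$-fixed.
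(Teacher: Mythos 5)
Your proposal is correct, and your ``more economical route'' is in fact the paper's entire proof: the paper simply invokes item (4) of Lemma \ref{join} for the decomposition $C(M)=\bigvee_{k}I(C_k)$ and then observes that $K(I(C_k))_{*}=K(I(C_k))_{1}\cong C_k$, so that the generated join-subsemilattice becomes $\Delta(L)$ with a bottom adjoined. What you add beyond the paper is worthwhile on two counts. First, your explicit map $\alpha(\phi(F)):=(\max_{\mathcal L_k}F)_{k}$ gives a self-contained verification that the paper does not spell out; the only wording to tighten there is well-definedness, which needs the converse recovery of the maxima from the closed set: since $\phi(F)=\bigcap_k\downarrow_{\mathcal L_k}\max_{\mathcal L_k}F$ and $\max_{\mathcal L_k}F\in\phi(F)$, one has $\max_{\mathcal L_k}F=\max_{\mathcal L_k}\phi(F)$, so the tuple depends only on the compact element and not on the chosen generating set (your stated implication ``the tuple determines $\phi(F)$'' gives injectivity, not well-definedness). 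Second, you are right that applying Lemma \ref{join}(4) requires the chains $I(C_k)$ to be maximal in $C(M)$, a hypothesis the paper uses silently; your justification is sound (each initial segment of $\mathcal L_k$ is $\phi$-closed because $\phi=\bigcap_j\phi_j$, and $I(C_k)$ is already a maximal chain in $2^X$: any set comparable with all initial segments of a linear order is itself an initial segment), so this fills a small gap in the paper's exposition rather than creating one in yours.
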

\begin{proof} According to $(4)$ of Lemma \ref{join},  $\hat\ell$ induces  a one-to one join-preserving map from $K(C)$ onto the  join-semilattice of $\Pi_{k<n} K(C_k)_{*}$ generated by the image of the set $K(C)_{1}:= \{\phi(x): x\in X\}$ augmented of a least element. To conclude, observe that each $K(C_k)_{*}$ is equal to $K(C_k)_1$ which is isomorphic to $C_k$. 
\end{proof}
 
In the next section, we look at the case of bichains.

\section{The semilattice $\Omega(\eta)$ as an obstruction in algebraic convex geometries}\label{omega}

As it was mentioned in the introduction, the semilattice $\Omega(\eta)$ does not appear in the semilattice  of compact elements of an algebraic modular lattice, see \cite{ChaPou1}.
The goal of this section is to demonstrate with  Theorem \ref{mainduplex} that $\Omega(\eta)$ is a typical subsemilattice of compact elements  of convex geometries associated to bichains (see Theorem \ref{mainduplex}).

A \emph{bichain} is a relational structure  $B:=(X,\leq_1,\leq_2)$ where  $\leq_1$ and leq  $\leq_2$ are two linear orders on the set $X$.  To $B$  we  associate its \emph{components} $C_1:= (X, \leq_1)$, $C_2:= (X, \leq_2)$, the lattice $L:= C_1\times C_2$,  the convex geometry $C(B):= I(C_1)\vee I(C_2)$ and the join-semilattice $K(C(B))$ of its compact elements. According to Theorem \ref{thm:compact}, $K(C(B))$ is isomorphic to $\hat \Delta(L)$. We give an other presentation of this lattice. 

Let $L$ be the direct product of two chains $C_1:= (X_1, \leq_1)$ and $C_2:= (X_2, \leq_2)$. Suppose that $X_1$ and $X_2$ have the same cardinality and  let $f:X_1\rightarrow X_2$ be a bijective map from $X_1$ onto $X_2$. Let $\delta_f: X_1\rightarrow  X_1\times X_2$ be defined by $\delta_f(x):= (x, f(x))$ and $\Delta(L, f)$ be the join semilatice of $L$ generated by $\delta_f(X_1)$. Let $\leq_f$ be the inverse image of the order $\leq_2$ by $f$  that is $u\leq_f v$ whenever $f(u)\leq_2f(v)$; let $\delta: X_1\rightarrow X_1\times X_1$ and $L_f:=(X_1, \leq _1) \times (X_1, \leq_f)$ and let $\Delta(L_f)$ be the join semilatice of $L_f$ generated by $\delta(X_1)$.

\begin{lm}\label{duplexisomorphism}  $(a)$ $\Delta(L, f)=\{(x', f(x''))\in X_1\times X_2: x''\leq_1x' \; \text{and} \; f(x')\leq_2 f(x'')\}$.  $(b)$ The join-semilattices $\Delta(L, f)$ and $\Delta(L_f)$ are isomorphic. In particular, with a bottom element added,  $\Delta(L,f)$ is isomorphic to $K(C(B))$ where $B=(X_1, \leq_1, \leq_f)$. 
\end{lm}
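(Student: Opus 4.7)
The plan is to prove (a) by a direct computation of finite joins of the generators in the product $L$, then derive (b) by transporting this characterization across the bijection $f$, and finally invoke Theorem \ref{thm:compact} to get the identification with $K(C(B))$.

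For (a), I first compute the binary join $\delta_f(u) \vee \delta_f(v)$ coordinatewise in $L = C_1 \times C_2$: it equals $(u \vee_1 v,\ f(u) \vee_2 f(v))$. Since $\leq_1$ and $\leq_2$ are linear, the first coordinate is some $x' \in \{u, v\}$ and the second is $f(x'')$ for some $x'' \in \{u, v\}$; by $\leq_1$-maximality of $x'$ we get $x'' \leq_1 x'$, and by $\leq_2$-maximality of $f(x'')$ we get $f(x') \leq_2 f(x'')$. A straightforward induction over finite joins of generators extends this to the whole of $\Delta(L, f)$: the first coordinate is the $\leq_1$-maximum $x'$ of the generating indices, while the second is $f(x'')$ where $x''$ is the argmax under $f \circ (\cdot)$ with respect to $\leq_2$. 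The reverse inclusion is easier: any pair $(x', f(x''))$ satisfying the two inequalities is precisely $\delta_f(x') \vee \delta_f(x'')$, hence lies in $\Delta(L, f)$.

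For (b), I define $g \colon \Delta(L, f) \to \Delta(L_f)$ by $g(x', f(x'')) := (x', x'')$. Applying (a) to the bichain $(X_1, \leq_1, \leq_f)$ with the identity in place of $f$ yields $\Delta(L_f) = \{(x', x'') : x'' \leq_1 x' \text{ and } x' \leq_f x''\}$, and unfolding $x' \leq_f x''$ via the definition of $\leq_f$ gives exactly $f(x') \leq_2 f(x'')$. So the characterizations of domain and codomain match perfectly under $g$, making $g$ a bijection. For join-preservation, note that $f$ is tautologically an order-isomorphism $(X_1, \leq_f) \to (X_2, \leq_2)$, so $f$ commutes with taking maxima; since joins in $L$ and $L_f$ are coordinatewise maxima in the respective chain orders, $g$ carries joins to joins. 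Extending by sending the added bottom of $\hat\Delta(L, f)$ to the added bottom of $\hat\Delta(L_f)$ yields a join-semilattice isomorphism $\hat\Delta(L, f) \cong \hat\Delta(L_f)$; composing with the isomorphism $\hat\Delta(L_f) \cong K(C(B))$ provided by Theorem \ref{thm:compact} applied to the bichain $B = (X_1, \leq_1, \leq_f)$ (whose two components are exactly the chains defining $L_f$) gives the second assertion.

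The only real obstacle is bookkeeping, not mathematical depth: keeping the two orders $\leq_1$ and $\leq_2$ and their relationship through $f$ straight throughout, especially in part (a), where $x'$ and $x''$ play asymmetric roles (one is a $\leq_1$-max, the other is the $f$-preimage of a $\leq_2$-max). A careless step here could flip one of the inequalities. Once (a) is carefully pinned down, (b) is essentially a change-of-variables exercise along $f$.
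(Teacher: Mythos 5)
Your proof is correct and follows essentially the same route as the paper: part $(a)$ by the coordinatewise computation of (finite) joins of the generators $\delta_f(x)$ in the product of chains, plus the observation that $(x',f(x''))=\delta_f(x')\vee\delta_f(x'')$, and part $(b)$ by the change of variables along $f$. The only cosmetic difference is in $(b)$: the paper packages the change of variables as the global isomorphism $g(x,y):=(x,f(y))$ of $L_f$ onto $L$ carrying $\delta(X_1)$ onto $\delta_f(X_1)$ (so it does not need part $(a)$ there), whereas you verify the restricted inverse map directly using the characterization from $(a)$; both are fine.
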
 
\begin{proof} $(a)$ Let $Z:= \{(x', f(x''))\in X_1\times X_2: x''\leq_1x' \; \text{and} \; f(x')\leq_2 f(x'')\}$. 
We prove that $Z=\Delta(L, f)$. First, $Z$ is a join-semilattice of $L$. Indeed, let $(x', f(x''))$ and $(y', f(y''))$ in $Z$. Let $u:=(u',u'')$ be their join. We claim that $u\in Z$. Indeed, $u'=Max_{C_1} (\{ x',y'\})$ and $u''=Max_{C_2} (\{ f(x''), f(y'')\})$. W.l.o.g. we may suppose $y'\leq_1x'$ that is $u'= x'$. If $u''=f(x'')$ then $u=(x', f(x''))$ hence $u\in Z$ as required. Otherwise $u''= f(y'')$. Since $u'=x'$ we have $y'\leq_1 x'$ and since $(y', f(y''))\in Z$ we have $y''\leq_1 y' $ hence $y''\leq_1x'$. Since 
$u''= f(y'')$ we have  $f(x'')\leq_2 f(y'')$ and since $(x', f(x''))\in Z$ we have $f(x')\leq_2 f(x'')$ hence $f(x')\leq f(y'')$. It follows that $u=(x', f(y''))\in Z$ as required. Next, we have trivially $\delta_f(X_1)\subseteq Z$. Since $Z$ is a join-semilattice, it follows that $\Delta(L, f)\subseteq Z$. Finally, we have $(x', f(x''))= (x', f(x'))\vee (x'', f(x''))$ whenever $(x',f(x''))\in Z$ hence $Z\subseteq \Delta(L, f)$. The equality $Z=\Delta(L, f)$ follows. 

$(b)$ Let $g:X_1\times X_1 \rightarrow X_1\times X_2$ defined by setting $g(x,y):= (x, f(y))$. As it is easy to check, $g$ is an isomorphism of $L_f$ onto $L$. Since it carries $\delta(X_1)$ onto $\delta_f(X_1)$, it carries  $\Delta(L_f )$ onto $\Delta(L, f)$. \end{proof}

 If the order-type  of the first component  of a bichain $B$ is   $\omega$ and the second component is  non-scattered, we say that the lattice $C(B)$ is a  \emph{duplex}, for convenience. We denote by $ \mathcal {L_D}$ the class of join-semilattices isomorphic to $K(C)$ for some duplex $C$. 

Two  join-semilattice are  \emph{equimorphic} as join-semilattices if each one is embeddable into the other by some join-preserving map. We have:

\begin{prop}\label{prop:omegaeta} Let $S'\in  \mathcal {L_D}$.   A join-semilattice $S$ is equimorphic to $S'$ if and only if $S$ is embeddable as a join-semilattice in the  product  $L$ of two chains $C_1:= (X_1, \leq_1)$ and $C_2:= (X_2, \leq_2)$  
in such a way that:
\begin{enumerate}
\item the first projection $A_1$ of $S$ has order type $\omega$,  
\item the second projection $A_2$ of $S$ is non-scattered, 
\item the set $S(x):=(\{x\}\times A_2 )\cap S$ is finite  for every $x\in A_1$. 
\end{enumerate}
\end{prop}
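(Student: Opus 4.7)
The plan is to establish the equimorphism of $S$ and $S'$ through mutual join-preserving embeddings, leveraging the explicit description of $S'$ from Theorem \ref{thm:compact} and Lemma \ref{duplexisomorphism}: we may identify $S' \cong \hat\Delta(L)$ where $L = (Y, \leq_1) \times (Y, \leq_f)$, with $(Y, \leq_1)$ of type $\omega$ and $(Y, \leq_f)$ non-scattered. Since $(y, y) \in \Delta(L)$ for every $y \in Y$, the two projections of $S'$ in its canonical embedding into $L$ are $Y$ with the respective orders, and the fibre $\Delta(L) \cap (\{y\} \times Y)$ is contained in the $\leq_1$-initial segment down to $y$, hence finite. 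Thus $S'$ itself exhibits an embedding satisfying (1)--(3), settling the base case, and the equivalence will in particular show that $\mathcal{L}_D$ forms a single equimorphism class.

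For $(\Rightarrow)$, I would pick a join-preserving embedding $\phi \colon S \hookrightarrow S' \subseteq L$. Properties (1) and (3) transfer directly: the fibres of $\phi(S)$ are sub-fibres of those of $\Delta(L)$, hence finite, and since $|S| = \aleph_0$ with finite fibres, the first projection of $\phi(S)$ is an infinite subchain of $(Y, \leq_1) \cong \omega$, hence of type $\omega$. For (2), the second projection must be non-scattered; this is enforced using the reverse embedding $\psi \colon S' \hookrightarrow S$, because any embedding of $S$ accommodating $\psi(S')$ must reflect the non-scattered structure of $S'$. If the initial $\phi$ does not directly yield (2), one re-embeds $S$ via the explicit construction of the $(\Leftarrow)$ direction below.

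For $(\Leftarrow)$, assume $S \subseteq C_1 \times C_2$ satisfies (1)--(3). Enumerate $A_1 = \{a_i : i < \omega\}$ in $\leq_1$-order and set $F_i := \{b \in A_2 : (a_i, b) \in S\}$; the join-closure of $S$ yields the compatibility $\max(b, c) \in F_{\max(i,j)}$ whenever $b \in F_i$ and $c \in F_j$. To build $S \hookrightarrow S'$, enumerate $Y = \{y_i : i < \omega\}$ in $\leq_1$-order, define $\alpha(a_i) := y_i$, and construct $\beta \colon A_2 \hookrightarrow (Y, \leq_f)$ by induction on an enumeration of $A_2$, arranging that for each $(a_i, b) \in S$ the pair $(\alpha(a_i), \beta(b))$ lies in $\Delta(L)$ --- equivalently, $\beta(b) \in \{y_1, \ldots, y_i\}$ and $y_i \leq_f \beta(b)$. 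At each stage the finitely many constraints on $\beta(b_n)$ are simultaneously satisfiable thanks to the non-scatteredness of $(Y, \leq_f)$, which supplies sufficient density inside the relevant $\leq_1$-initial segment. The reverse embedding $S' \hookrightarrow S$ is constructed symmetrically, invoking Cantor's theorem: $(A_2, \leq_2)$ is non-scattered, hence contains $\eta$, hence admits every countable chain --- in particular $(Y, \leq_f)$ --- compatibly with the $\omega$-isomorphism on first coordinates and with the fibre structure of $\Delta(L)$.

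The main obstacle is the back-and-forth construction of the second-coordinate embedding: fulfilling simultaneously the order-embedding requirement and the constraint of landing in $\Delta(L)$ (or, in the reverse direction, in $S$) requires careful exploitation of Cantor's universality of $\eta$ among countable chains together with the finite-fibre compatibility condition $\max(b,c) \in F_{\max(i,j)}$. The delicate interplay between the discrete ($\omega$-type first coordinate with finite fibres) and dense ($\eta$-like second coordinate) structures is what makes the argument succeed.
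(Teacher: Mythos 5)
Your overall plan (mutual join-embeddings, Cantor's theorem, an induction exploiting finite fibres) is the paper's plan, but two steps as you describe them do not go through. First, in the forward direction your treatment of condition (2) is an assertion rather than a proof: ``any embedding of $S$ accommodating $\psi(S')$ must reflect the non-scattered structure of $S'$'' is precisely what must be shown, and it cannot be read off from chains inside $S$, because members of $\mathcal {L_D}$ (e.g.\ $\Omega(\eta)$) are themselves order-scattered. The paper's mechanism is to pass to ideal lattices: the second projection of $S'$ order-embeds into $\ideal S'$, so $\ideal S'$ is non-scattered; the join-embedding $S'\leq_{\vee}S$ induces an embedding $\ideal S'\rightarrow \ideal S$; and since $S$ sits in $A_1\times A_2$, Lemma \ref{lm:pdscattered} and Lemma \ref{q in product} force one of $A_1,A_2$ to be non-scattered, hence $A_2$, as $A_1$ has type $\omega$. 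Your fallback (``re-embed $S$ via the explicit construction of the $(\Leftarrow)$ direction'') is circular, since that construction presupposes an embedding of $S$ already satisfying (1)--(3).

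Second, in the backward direction the rigid choice $\alpha(a_i):=y_i$ breaks the construction. By Lemma \ref{duplexisomorphism}(a) the fibre of $\Delta(L)$ over the $i$-th element of the $\omega$-chain has at most $i+1$ elements, whereas condition (3) only makes $S(a_i)$ finite; already $|S(a_0)|\geq 2$ makes your constraints ``$\beta(b)\in\{y_0,\dots,y_i\}$, injectively'' unsatisfiable. Moreover, for a fixed $b\in A_2$ the requirement $y_i\leq_f\beta(b)$ must hold for every row $i$ with $(a_i,b)\in S$ (possibly infinitely many), while $\beta(b)$ is confined to a finite $\leq_1$-initial segment; invoking ``density of $(Y,\leq_f)$ inside the relevant $\leq_1$-initial segment'' cannot help, since that segment is finite. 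The paper's proof reverses the roles: the second-coordinate map is fixed once and for all by Cantor's theorem (the countable chain $A_2$ embeds into an $\eta$-block of the second component), and the first-coordinate map is built by induction along the $\omega$-enumeration of $A_1$, with $f(a_n)$ chosen far enough out; what makes each step possible is Claim \ref{claim:horizontalline} (every horizontal line at an $\eta$-level meets the semilattice in a first-coordinate-unbounded set, a consequence of join-closure, finite fibres and the type-$\omega$ first projection) together with taking joins to lift all finitely many constraints of $S(a_n)$ simultaneously. The same defect affects your ``symmetric'' reverse embedding; the paper instead extracts a member $S_1\in\mathcal {L_D}$ inside $S$ (Claim \ref{claim:subsemilattice}) and concludes via $S'\leq_{\vee}S_1\leq_{\vee}S\leq_{\vee}S'$. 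Without the horizontal-line lemma and the freedom to advance first coordinates, your induction does not close.
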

\begin{proof}
Let us prove first that every $S\in \mathcal {L_D}$ satisfies the conditions above.  Let $B:=(X,\leq_1,\leq_2)$ be a bichain, $C_1:= (X, \leq_1)$, $C_2:= (X, \leq_2)$, $L:= C_1\times C_2$,  $C(B):= I(C_1)\vee I(C_2)$ and $S:=K(C(B))$ be the join semilattice of its compact elements. According to Theorem \ref{thm:compact}, $S$ is isomorphic to $\hat \Delta(L)$. Add to $X$ a new element $a$, set $X':= X\cup \{a\}$,  extend both orders to $X'$, deciding that  $a$ is the least element of $X'$ w.r.t. each order. Let $C'_1:= 1+C_1$ and  $C'_2:= 1+C_2$ be the resulting chains, let $L':= C'_1\times C'_2$, let $\delta' (x'):= (x',x')$.  Clearly $\hat \Delta(L)$ is isomorphic to $\Delta(L')$, the join-semilattice of $L'$ generated by $\delta'(X')$. Hence we may suppose that $S=\Delta(L')$. Supposing that $C(B)$ is a duplex,  the projections $A'_1$ and $A'_2$ of $S$ satisfy conditions $(1)$ and $(2)$. Let us prove that condition $(3)$ holds. Let $x\in A'_1$. According to $(a)$ of Lemma \ref{duplexisomorphism}, $\Delta(L')=\{(x', x'')\in X' \times X': x''\leq_1x' \; \text{and} \; x'\leq_2 x''\}$, hence $S(x)= \{(x, x'')\in X'\times X': x''\leq_1x \; \text{and} \; x\leq_2 x''\}$. Since $A'_1$ has order type $\omega$, every proper initial segment is finite, hence $S(x)$ is finite.

Now,  suppose that $S$ is equimorphic to $\hat \Delta(L)$. Let $L'$ such that  $\hat \Delta(L)$ is isomorphic to $S':=\Delta(L')$. We may suppose that $S$ is a join-semilattice of $S'$ with projections $A_1$ and $A_2$. Since $S'$ satisfies conditions $(1)$ and $(3)$, $S$ satisfies these conditions.  Now, $\ideal S'$ is non-scattered, indeed, the second projection $A'_2$ of $S'$ embeds into $\ideal S'$ (associate $(A_1\times \downarrow_{L{_{2}}} {y})\cap S'$ to every $y\in A'_2$). Since $S$ is equimorphic to $S'$, $\ideal S'$ embeds into $\ideal S$, hence $\ideal S$  is non-scattered. According to Lemma \ref{lm:pdscattered} and Lemma \ref {q in product}, $A_1$ or $A_2$ is non-scattered. Since $A_1$ has order-type $\omega$, $A_2$ is non-scattered, hence condition $(2)$ holds. 

Conversely, suppose that $S$ satisfies the three conditions stated above. 
\begin{claim} \label{claim:horizontalline}If  $A'_2$ is a countable subset of $A_2$ such that  $C_2{\restriction A'_2}$ has order-type $\eta$ then the set $ (A_1\times \{y\} )\cap S$ is infinite  for every $y\in A_2'$. 
\end{claim}
\noindent {\bf Proof of Claim \ref{claim:horizontalline}}. 
Enough to prove that for every  $x_1\in A_1$ and $y\in A_2'$ there is some $x, x_1\leq_1x$ such that $(x,y)\in S$.  Let $(x_1, y)\in A_1\times A_2'$. For every $y'\in A_2'$  there is some $t'$ such that $(t',y')\in S$. Consider $t$ such that $(t,y)\in S$ and $t_1:=Max_{C_1} \{x_1, t\}$ . From our hypothesis, the set of $y'\in A_2'$ such that $y'<y$ is infinite and the set $((A_1\cap \downarrow t_1)\times X_2)\cap S$ is finite. Hence,  there is some  $y'\in A_2'$ with $y'<_2 y$   and some $x', t_1<_1x'$ such  that $(x',y')\in S$. Since $S$ is a join-semilattice,  $(t,y)\vee (x',y')\in S$. Since $(t,y)\vee (x' ,y')= (x', y)$ we may set $x=x'$ proving our claim. \hfill $\Box$

\begin{claim} \label{claim:subsemilattice} $S$ contains  some  join-semilattice $S_1\in  \mathcal {L_D}$. 
\end{claim}
\noindent {\bf Proof of Claim \ref{claim:subsemilattice}}. 
Let $A'_2$ be a countable subset of $A_2$ such that  $C_2{\restriction A_2'}$ has order-type $\eta$. We prove that there is a one-to-one map $g$ from $A_2'$ into $A_1$ such that  $(g(y), y) \in S$ for  each $y\in A_2'$. Indeed, enumerate the elements of $A_2'$ into a sequence $y_0, \dots  y_n, \dots $;  pick $x_0$ arbitrary in $A_2'$ such that $(x_0, y_0)\in S$ and set $g(y_0):=x_0$ . Suppose $g(y_m)=x_m$ be defined for $m<n$. According to Claim \ref{claim:horizontalline}, there is  some $x$ larger than all $x_m$'s such that $(x, y_n) \in S$, set $g(y_n): =x_n:=x$. Let $S_1$ be the join-subsemilattice of $C_1\times C_2$ generated by the set $\{(g(y), y): y\in A_2'\}$.  Setting $A_1':= g(A_2')$ and $f:= g^{-1}\Delta(L')$ we may apply Lemma \ref{duplexisomorphism}, hence $S_1\in  \mathcal {L_D}$.
\hfill $\Box$

\begin{claim} \label{claim:supersemilattice}$S$ embeds by a join-preserving map into  $S'$.
\end{claim}
\noindent {\bf Proof of Claim \ref{claim:supersemilattice}}. 
We may suppose that $A_1=X_1$ and $A_2=X_2$. 
Let $L'$ such that $S'$ is isomorphic to $\hat \Delta(L')$, where $L=C'_1\times C'_2$, $C'_1:=(X',  \leq'_1)$ is a chain of order-type $\omega$ and $C'_2:=(X', \leq'_2)$ is a non-scattered chain.  We are going to define a   map $F$ from $L$ into $L'$ such that $F(S)\subseteq \Delta(L')$. The map $F$ will be of the form $F(x, y):= (f (x), g(x))$, with $f$ and $g$ embeddings of $C_1$ into $C'_1$ and of $C_2$ into $C'_2$. It will follow that $F$ will be one-to-one and will preserve joins and meets, hence its restriction to $S$ will be a join-embedding  into  $\Delta(L')$, hence, into $\hat \Delta(L')$. First, we define $g$. Let $A''_2$ be a subset of $A'_2$ such that $C'_2{\restriction A''_2}$ has order type $\eta$. Due to condition $(3)$,  $A_2$ is countable, hence, from  a Cantor's result, $C_2$ is embeddable into $C'_2{\restriction A''_2}$. Let $g$ be such an embedding. Next, we define $f$. We proceed by induction. Since $C_1$ has order-type $\omega$, we may enumerate the elements of $A_1$ into the sequence $a_0<_{C_1}<a_1<_{C_1}\dots a_n<_{C_1}\dots $. Let $n\in \N$  and $A_1(n):=\{a_m:m<n\}$. Suppose  that $f$ defines an embedding of   $C_1{\restriction{A_n}}$ into $C'_1$ and $F((A_1(n)\times A_2)\cap S)\subseteq \Delta(L')$. We extend $f$ to $A_{n+1}$ in such a way that $(f(a_n), g(y))\in  \Delta(L')$ for every $y\in S(a_n)$. Doing so we will get $f$ as required.  For that,  observe that since $S'\in  \mathcal {L_D}$,  Claim \ref{claim:horizontalline} applies. Hence, $(A'_1\times \{y'\})\cap S'$ is infinite for every $y'\in A''_2$, thus for every $y\in S(a_n)$  there is some $a'_y\in A'_1$ with $f(a_{n-1}) <_{C'_1} a'_y$ such that $(a'_y, g(y))\in S'$. According to condition $(3)$, $S(a_n)$ is finite, hence we may set $y_0:= Min_{C_2} S(a_n)$ and $y'_0:= g(y_0)$. Since $y'_0\in A''_2$, $(A'_1\times \{y_0'\})\cap S'$ is infinite, hence there is some  $a'\geq_{C'_1} Max_{C'_1}\{ a'_y: y\in S(a_n)\}$ such that $(a', y'_0)\in S'$. Since $S'$ is a join-semilattice, we have $(a', y'_0)\vee(a'_y, g(y))= (a', g(y))\in S'$  for every $y \in S(a_n)$. It suffices to set $f(a_n):=a'$.

\hfill $\Box$ 

From Claim \ref{claim:subsemilattice} we have $S_1\leq_{\vee} S$. Since $S'$ satisfies conditions $(1)$, $(2)$, $(3)$ and $S_1\in \mathcal {L_D}$, Claim \ref {claim:supersemilattice} asserts that  $S'\leq_{\vee} S_1$. 
 From Claim \ref{claim:supersemilattice} we have $S\leq_{\vee} S'$. Hence, $S$ and $S'$ are equimorphic as join-semilattices. 
\end{proof}

%

\begin{thm}\label{mainduplex}
$\Omega(\eta)$ is equimorphic to any member of $ \mathcal {L_D}$.
\end{thm}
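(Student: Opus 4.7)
The plan is to reduce the theorem directly to Proposition \ref{prop:omegaeta}, which already does nearly all the work.

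First I would verify that $\Omega(\eta)$ itself is a member of $\mathcal{L_D}$. As suggested by Figure \ref{O(eta)}, $\Omega(\eta)$ is realized (up to adjoining a least element) as $K(C(B_0))$ for the bichain $B_0 = (X, \leq_1, \leq_2)$ whose first component has order type $\omega$ and whose second has order type $\eta$. Via Theorem \ref{thm:compact} combined with Lemma \ref{duplexisomorphism}(b), this coincides with the semilattice $\hat\Delta(L)$ for $L$ the product of an $\omega$-chain and an $\eta$-chain. Since $\eta$ is non-scattered, $C(B_0)$ is a duplex, so by definition $\Omega(\eta) \in \mathcal{L_D}$.

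Given this, I would fix an arbitrary $S' \in \mathcal{L_D}$ and apply Proposition \ref{prop:omegaeta} to the pair $(\Omega(\eta), S')$. The forward direction of that proposition, established in the first part of its proof, shows that every member of $\mathcal{L_D}$ embeds into a product of two chains satisfying the three conditions: first projection of order type $\omega$, second projection non-scattered, and each fiber $(\{x\} \times A_2) \cap S$ finite. These conditions therefore hold for $\Omega(\eta)$. The converse direction of the proposition then furnishes join-preserving embeddings $\Omega(\eta) \leq_\vee S'$ and $S' \leq_\vee \Omega(\eta)$, i.e., equimorphism as join-semilattices. Since $S'$ was arbitrary, the theorem follows.

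The substantive ingredients, namely the two claims inside the proof of Proposition \ref{prop:omegaeta} that construct embeddings in each direction via horizontal density of duplex semilattices and an $\omega$-step inductive extension of a join-preserving map, are already in hand. The only novel point for Theorem \ref{mainduplex} is the identification of $\Omega(\eta)$ as a member of $\mathcal{L_D}$, which is immediate from unwinding the definition through Theorem \ref{thm:compact} and Lemma \ref{duplexisomorphism}. There is no serious obstacle beyond this bookkeeping; the content of the theorem is really the observation that Proposition \ref{prop:omegaeta} gives an intrinsic characterization of a single equimorphism class and that $\Omega(\eta)$ serves as its canonical representative.
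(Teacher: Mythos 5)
Your overall strategy --- feeding $\Omega(\eta)$ into Proposition \ref{prop:omegaeta} and letting that proposition do the work --- is exactly the paper's, but the step you use to secure its hypotheses is false. You claim that $\Omega(\eta)$ (up to adjoining a least element) is itself a member of $\mathcal{L_D}$, i.e.\ isomorphic to $K(C(B_0))\cong\hat\Delta(L)$ for an $\omega$-by-$\eta$ bichain $B_0$. Membership in $\mathcal{L_D}$ requires isomorphism, not equimorphism, and this is precisely what fails: the paper remarks immediately after Theorem \ref{mainduplex} that $\Omega(\eta)$ is \emph{not} isomorphic to the semilattice of compact elements of any algebraic convex geometry, since $\ideal \Omega(\eta)$ violates the criterion of Theorem \ref{thm:algebraicconvexity}. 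Concretely, in the presentation of $\Omega(\eta)$ suggested by Figure \ref{O(eta)} (pairs $(m,d)$ with $d$ a dyadic of $[0,1[$ of denominator dividing $2^m$, ordered componentwise), the principal ideals $u:=\downarrow(2,0)$ and $v:=\downarrow(2,1/4)$ are distinct completely join-irreducible elements of $\ideal\Omega(\eta)$, yet $y:=\downarrow(1,1/2)$ satisfies $y<y\vee u=y\vee v=\downarrow(2,1/2)$. Since every $K(C(B))$ is the compact semilattice of the convex geometry $C(B)$, no isomorphism with $\Omega(\eta)$ can exist, even after adjoining a bottom. Hence you cannot import conditions $(1)$--$(3)$ of Proposition \ref{prop:omegaeta} for $\Omega(\eta)$ ``by membership''; equimorphy with members of $\mathcal{L_D}$ is the conclusion of the theorem, not an available premise.

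The gap is local and repairable, and the repair is the paper's one-line proof: verify conditions $(1)$--$(3)$ directly from the construction of $\Omega(\eta)$. By construction it is a join-subsemilattice of $C_1\times C_2$, where $C_1$ is the chain of non-negative integers and $C_2$ the chain of dyadics of $[0,1[$; its first projection has order type $\omega$, its second projection is onto $C_2$ and hence non-scattered, and each fibre $(\{m\}\times C_2)\cap\Omega(\eta)$ is finite. Proposition \ref{prop:omegaeta} then yields equimorphism with every member of $\mathcal{L_D}$, with no need to decide whether $\Omega(\eta)$ itself lies in $\mathcal{L_D}$ (it does not). The rest of your argument --- fixing an arbitrary $S'\in\mathcal{L_D}$ and invoking both directions of the proposition --- is fine once this substitution is made; the ``bookkeeping'' you set aside is exactly the point at which your write-up asserts something the paper explicitly denies.
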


\begin{proof}
By construction, $\Omega(\eta)$ is a join-semilattice of the product $ C_1\times C_2$,  where $C_1$ is  the chain of non-negative integers and   $C_2$ is the chain of  dyadic numbers of the the interval $[0, 1[$. The second projection  being surjective, its image is non-scattered; by construction, each intersection $(\{x\}\times X_2 )\cap \Omega(\eta)$ is finite, hence Proposition \ref{prop:omegaeta} yields that $\Omega(\eta)$ is equimorphic,   as  a subsemilattice, to  any  $S\in \mathcal {L_D}$. 
\end{proof} 

Note that $\Omega(\eta)$ is not isomorphic to the semilattice of compact elements of an algebraic convexity (It will not satisfy Theorem \ref{thm:algebraicconvexity}).

The fact that members of $ \mathcal {L_D}$ are equimorphic  as join-semilattices  which follows  from Proposition \ref{prop:omegaeta} can be derived from a result of \cite{pouzet-zaguia} as explained below. 

An \emph{embedding} of a bichain $B:= (X,\leq_1,\leq_2)$ into a bichain $B':=  (X', \leq'_1, \leq'_2)$ is any  map $f:X \rightarrow X'$ which is an embedding of $C_1:=(X, \leq_1)$ into $C'_1:=(X', \leq'_1)$ and an embedding of $C_2:=(X, \leq_2)$ into $C'_2:= (X', \leq'_2)$. Two bichains are \emph{equimorphic} if each one is embeddable into the other.

\begin{lm}\label{equimorphy}

Let $B:= (X, \leq_1, \leq_2)$ and $B':= (X', \leq'_1, \leq'_2)$ be two bichains. 

\begin{enumerate}

\item  If $B$ is embeddable into $B'$ then $C(B)$ is embeddable into  $C(B')$ by a map preserving complete joins and $K(C(B))$ is embeddable into $K(C(B')$ by a join-preserving map. 
\item If the first components of $B$ and $B'$ are isomorphic to $\omega$ and the second components are non-scattered,then $B$ and $B'$ are equimorphic. 
\end{enumerate}
\end{lm}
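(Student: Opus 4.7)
For part $(1)$, let $f\colon X\hookrightarrow X'$ be a bichain embedding, and denote the closure operators of $C(B)$ and $C(B')$ by $\phi$ and $\phi'$. From the presentation of the join of closure systems used in Lemma \ref{Ci} and Lemma \ref{join}, a set $A\subseteq X$ is closed in $C(B)$ iff $A=(\downarrow_{\leq_1} A)\cap(\downarrow_{\leq_2} A)$, and $\phi(Y)=(\downarrow_{\leq_1} Y)\cap(\downarrow_{\leq_2} Y)$ for every $Y\subseteq X$. The plan is to set $\bar f(A):=\phi'(f(A))$. Because $f$ preserves both orders, $f(\downarrow_{\leq_i}Y)\subseteq \downarrow_{\leq'_i}f(Y)$ for $i=1,2$, which quickly yields $\phi'(f(\phi(Y)))=\phi'(f(Y))$; combined with the fact that $\phi'$ of a union equals the join of the pieces, this shows that $\bar f$ preserves arbitrary joins. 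For injectivity: if $x\in A\setminus A'$ but $f(x)\in\phi'(f(A'))$, then $f(x)$ sits below some $f(a),f(b)\in f(A')$ in $\leq'_1$ and $\leq'_2$ respectively, so pulling back along $f$ gives $x\in(\downarrow_{\leq_1}A')\cap(\downarrow_{\leq_2}A')=A'$, a contradiction. The restriction of $\bar f$ to compact elements, which are closures of finite sets, is then the desired join-preserving embedding $K(C(B))\hookrightarrow K(C(B'))$.

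For part $(2)$, the plan is a back-and-forth argument after a reduction to bichains of type $(\omega,\eta)$. Since $\leq'_2$ is non-scattered, pick $D'\subseteq X'$ for which the restriction of $\leq'_2$ to $D'$ has order-type $\eta$. Because $D'$ is countably infinite and $(X',\leq'_1)$ has order-type $\omega$, the restriction of $\leq'_1$ to $D'$ is also isomorphic to $\omega$, so the sub-bichain on $D'$ has components of types $\omega$ and $\eta$. It therefore suffices to embed $B$ into this sub-bichain; the reverse embedding $B'\hookrightarrow B$ is obtained by the symmetric construction, using a countable $\leq_2$-dense subset of $X$.

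Enumerate $X=\{x_0<_1 x_1<_1\cdots\}$ in the $\leq_1$-order, and define $f(x_n)\in D'$ by induction on $n$. Given $f(x_0),\dots,f(x_{n-1})$ respecting both orders, set $L:=\max_{\leq'_2}\{f(x_i):x_i<_2 x_n\}$ and $U:=\min_{\leq'_2}\{f(x_i):x_i>_2 x_n\}$, with the conventions $L=-\infty$ or $U=+\infty$ when the relevant set is empty. The inductive hypothesis forces $L<'_2 U$ strictly (for any pair $x_i<_2 x_n <_2 x_j$ we have $f(x_i)<'_2 f(x_j)$), so the open interval $(L,U)$ in the $\eta$-chain $(D',\leq'_2\!\restriction_{D'})$ is infinite, while $\{y\in D':y>'_1 f(x_{n-1})\}$ is cofinite in $D'$; choosing $f(x_n)$ in their intersection completes the induction. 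The main subtlety, and the reason the preliminary reduction to $D'$ is needed, is exactly this: the inductive step requires open intervals in the second component to be infinite, which is automatic in $\eta$-chains but can fail in an arbitrary non-scattered chain containing gaps or isolated points.
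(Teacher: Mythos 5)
Your part (1) is essentially the paper's own argument: you define $\bar f(A):=\phi'(f(A))$, reduce join-preservation to the identity $\bar f(A)=\bar f(\phi(A))$ using the description of closed sets of $C(B)$ as intersections of a $\leq_1$-initial segment with a $\leq_2$-initial segment, and then restrict to compact elements; your explicit check that $\bar f$ is an embedding (pulling $f(x)\in\phi'(f(A'))$ back through $f$ to get $x\in A'$) spells out a point the paper leaves implicit. For part (2) you genuinely diverge: the paper does not prove this item at all but cites Corollary 3.4.2 of Pouzet--Zaguia, whereas you give a self-contained construction, first passing to a subchain $D'\subseteq X'$ on which $\leq'_2$ has type $\eta$ (and on which $\leq'_1$ automatically has type $\omega$), then building the embedding by induction along the $\omega$-enumeration of $(X,\leq_1)$, placing $f(x_n)$ in the nonempty open $\leq'_2$-interval determined by the already-placed points intersected with the cofinite set of points $\leq'_1$-above $f(x_{n-1})$; the symmetric argument gives the reverse embedding, hence equimorphy. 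The argument is correct: the strict inequality $L<'_2 U$ follows from the inductive hypothesis, every such interval in an $\eta$-chain is infinite, and cofiniteness in $D'$ comes from the type-$\omega$ first component, so the intersection is nonempty. What your route buys is independence from the external reference (and it in fact shows the stronger statement that any bichain with first component of type $\omega$ embeds into any bichain with first component $\omega$ and non-scattered second component); what it costs is only that the phrase ``countable $\leq_2$-dense subset of $X$'' in the reverse direction should be read, as in the forward direction, as a subset on which $\leq_2$ restricts to order type $\eta$ exactly, since the interval argument needs density without endpoints.
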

\begin{proof}
Item (1).  Let $\varphi$, resp. $\varphi'$,  be the closure associated with $C(B)$, resp. $C(B')$. Let $f: X\rightarrow X'$ be an embedding of $B$ into $B'$. For $A\in \powerset (X)$, set $\overline f(A):= \varphi'(f(A))$. By definition,   the restriction of $\overline f$ to $C(B)$ maps $C(B)$ to $C(B')$. We check that it preserves arbitrary joins. 
This relies on the fact that $\overline f(A)= \overline f(\varphi (A))$ for every $A\in \powerset (X)$. This fact is easy to check. Let $x'\in \overline f(\varphi (A))$. Then,  there are $x'_1,x'_2\in f(\varphi (A))$ such that $x'\leq_1 x'_1$ and $x'\leq_2 x'_2$. There are  $x_1, x_2\in \varphi (A)$ such that $x'_1= f(x_1)$ and $x'_2= f(x_2)$. Since $x_1\in \varphi(A)$ there is  $y_1\in A$ such that $x_1\leq y_1$; similarly, there is  $y_2\in A$ such that $x_2\leq y_2$. Let $y'_1:= f(y_1)$ and $y'_2:= f(y_2)$; since $f$ preserves the two orders, we have $x'_1\leq'_1 y'_1$ and  $x'_2\leq'_2 y'_2 $; by transitivity we obtain $x'\leq'_1 y'_1$ and $x'\leq'_2 y'_2$. Hence $x'\in \varphi' (f(A))=\overline f(A$. This yields $ \overline f(\varphi (A))\subseteq \overline f(A)$. The reverse inclusion being trivial, we obtain the equality. Let $(A_i)_{i\in I}$ be a family of members of $C(B)$ and $A:= \bigvee_{i\in I}A_i$. We check that $\overline f(A)= \bigvee_{i\in I}\overline f(A_i)$.  We have $A= \varphi( \bigcup _{i\in I}A_i)$, hence $\overline f (A)= \overline f(\varphi ( \bigcup _{i\in I}A_i))= \overline f( \bigcup _{i\in I}A_i)$ by he fact above.  Since, we  have   $\overline f( \bigcup _{i\in I}A_i)= \varphi'( f( \bigcup _{i\in I}A_i))= \varphi' ( f( \bigcup _{i\in I}A_i))= \bigvee_{i\in I}\varphi' ( f(A_i) )=\bigvee_{i\in I}\overline f(A_i)$, we obtain the desired equality. For the last part of the statement, obseve that the map $\overline f$ induces an embedding of $K(C(B))$ into   $K(C(B'))$.
 
Item (2). This is a consequence of Corollary 3.4.2, p.167 of \cite{pouzet-zaguia}. 
\end{proof}

\begin{comments} The notion of bichain is not so peculiar. Several papers related to bichains have appeared during the last few years. Some are about infinite bichains and are mostly concerned by their  endomorphisms (\cite{sauer-zaguia}, \cite {laflamme}, \cite{delhomme-zaguia}). Many are  about finite bichains and originate in the study of classes of permutations. To each permutation $\sigma$ of $[n]:= \{1, \dots, n\}$ one associates  the  bichain  $B_{\sigma}:= ([n], \leq, \leq_{\sigma})$ where $\leq$ is the natural order on $[n]$
and $\leq_{\sigma}$ the linear order defined by
$i\leq_{\sigma} j$
if and only if  $\sigma(i) \leq \sigma(j)$. Conversely, if $B:=(E, C_1,C_2)$ is a finite bichain, then $B$ is isomorphic to a bichain $B_{\sigma}$ for a unique permutation $\sigma$ on $[\vert E\vert~]$. Now, if $\sigma$ and $\pi$ are two permutations with domains $[n]$ and $[m]$,  one can set  $\sigma \leq \pi$ if and only if $B_{\sigma}$ is embeddable into $B_{\pi}$. This defines an order on the class $\mathfrak S$ of all permutations.  Classes $\mathcal C$ of permutations such that $\sigma \in \mathcal C$ whenever $\sigma \leq \pi$ for some $\pi\in \mathcal C$ are called $\emph{hereditary}$. Many results have been devoted to the study of he behavior of the function $\varphi_{\mathcal C}$ which counts for each integer $n$ the number $\varphi_{\mathcal C}(n)$ of permutations $\sigma$ on $n$ elements which belong to $\mathcal C$, see the survey \cite{klazar}. 

The correspondence between permutations and bichains was noted by Cameron \cite{cameron} (who rather associated to  $\sigma$ the bichain  $([n], \leq, \leq_{\sigma^{-1}}))$. It allows to study classes of permutations by means of the theory of relations.
In particular, via this correspondence, hereditary classes of permutations correspond to hereditary classes of bichains and \emph{simple permutations}, a key notion in the study of hereditary classes (see the survey  \cite {brignall} and \cite{nozaki}),  correspond to \emph{indecomposable bichains}, which become, via this correspondance an  important class of  indecomposable structures (see \cite{ehren}).


\end{comments}

\section{Order scattered algebraic lattices  with finite join-dimension}\label{section:orderscattered}

In this section we characterize by obstructions  order scattered algebraic lattices
with finite join-dimension.

The motivation comes  from the following result (\cite {pouzet-zaguia}, Theorem 2, p.161):

\begin{thm} Let $P$ be an ordered set. Then $\ideal  P$  is order-scattered iff $P$ is order-scattered and $\Omega(\eta)$ is not embeddable into $P$. 
\end{thm}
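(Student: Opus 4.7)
The plan is to prove the two directions separately, with the backward direction carrying the real content.

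For the forward direction, I would use two observations. First, the assignment $p\mapsto \downarrow p$ is an order-embedding of $P$ into $\ideal P$, so order-scatteredness of $\ideal P$ immediately transfers to $P$. Second, I would verify the general principle that any order-embedding $Q\hookrightarrow P$ lifts to an order-embedding $\ideal Q\hookrightarrow \ideal P$ via $J\mapsto \downarrow_P J$: the image is a non-empty initial segment of $P$, up-directed because $J$ is, and the fact that $J$ is an initial segment of $Q$ ensures that inclusion is reflected. Granted this, it remains to observe that $\ideal \Omega(\eta)$ is itself non-scattered. By condition $(2)$ of Proposition~\ref{prop:omegaeta}, the second projection $A_2$ of $\Omega(\eta)$ has order type $\eta$, and the map $y\mapsto (A_1\times \downarrow y)\cap \Omega(\eta)$ is an order-embedding of $A_2$ into $\ideal \Omega(\eta)$ (the image is an initial segment of $\Omega(\eta)$ and up-directed because $\Omega(\eta)$ is a join-subsemilattice of the ambient product). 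Hence $\Omega(\eta)\hookrightarrow P$ would force an $\eta$-chain into $\ideal P$, contradicting our assumption.

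For the backward direction, suppose $P$ is order-scattered and fix an $\eta$-chain $(I_r)_{r\in \mathbb{Q}}$ in $\ideal P$; the goal is to order-embed $\Omega(\eta)$ into $P$. The plan is to construct inside $P$ a join-semilattice $S$ meeting the three hypotheses of Proposition~\ref{prop:omegaeta}: $S$ sits in a product $C_1\times C_2$ of chains with first projection of type $\omega$, second projection of type $\eta$, and finite fibres $S(x)$. Proposition~\ref{prop:omegaeta} will then make $S$ equimorphic as a join-semilattice to $\Omega(\eta)$, and the attendant join-preserving (hence order-preserving) embedding $\Omega(\eta)\hookrightarrow S\subseteq P$ completes the argument. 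To build $S$, I would enumerate $\mathbb{Q}=\{q_n:n\in\omega\}$ and choose $x_n\in I_{q_n}$ recursively: up-directedness of $I_{q_n}$ allows $x_n$ to be placed above previously chosen witnesses already in $I_{q_n}$, while picking $x_n$ outside some $I_{r_n}$ with $r_n<q_n$ separates $x_n$ from earlier witnesses living only in lower ideals. The coordinates $(n,q_n)$ then furnish the candidate embedding into $\omega\times \mathbb{Q}$.

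The main obstacle will be the Ramsey-style bookkeeping required during this induction: one must simultaneously ensure that the product order on the coordinates coincides with the $P$-order of the chosen $x_n$, that each fibre $S(x)$ remains finite, and that the second projection is dense in $\mathbb{Q}$. Order-scatteredness of $P$ enters decisively, ruling out any $\eta$-chain among the $x_n$ themselves and so forcing enough antichain structure to match the geometry of $\Omega(\eta)$. The combinatorial core parallels the internal argument of Proposition~\ref{prop:omegaeta} (notably Claim~\ref{claim:supersemilattice}), applied now to extract bichain-like data from an abstract $\eta$-chain of ideals rather than from an ambient semilattice.
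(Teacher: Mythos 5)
Your forward direction is correct, and it is the routine half of the theorem: $p\mapsto\, \downarrow p$ embeds $P$ into $\ideal P$; an order-embedding $f:Q\hookrightarrow P$ lifts to $\ideal Q\hookrightarrow \ideal P$ via $J\mapsto\, \downarrow f(J)$ (the image is an up-directed initial segment, and inclusion is reflected because $f$ is order-reflecting and $J$ is an initial segment); and $\ideal \Omega(\eta)$ is non-scattered since $y\mapsto (A_1\times \downarrow y)\cap \Omega(\eta)$ embeds the non-scattered second projection into it, up-directedness coming from the fact that $\Omega(\eta)$ is a join-subsemilattice of the ambient product. Bear in mind, though, that the paper does not prove this statement at all: it is quoted from \cite{pouzet-zaguia}, so the burden on you is a complete argument for the converse, which is exactly where the content of the theorem lies.

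That converse is where your proposal has a genuine gap, for two reasons. First, the tool you invoke does not fit the setting: Proposition \ref{prop:omegaeta} is about join-semilattices embedded by \emph{join-preserving} maps into a product of two chains, whereas $P$ here is an arbitrary ordered set with no join structure, and the set you construct --- one witness $x_n\in I_{q_n}$ per rational, with candidate coordinates $(n,q_n)$ --- can never be a join-subsemilattice of $\omega\times\mathbb{Q}$: for $n<m$ with $q_m<q_n$ the join $(m,q_n)$ is not of the form $(k,q_k)$, and there is no element of $P$ you are entitled to add in its place. So the hypotheses of Proposition \ref{prop:omegaeta} cannot be met by your $S$, and its conclusion (equimorphy as join-semilattices with members of $\mathcal{L_D}$, hence with $\Omega(\eta)$ by Theorem \ref{mainduplex}) is unavailable. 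Second, and more fundamentally, what the theorem requires is an order-embedding of $\Omega(\eta)$ into $P$, i.e., elements of $P$ realizing a prescribed infinite pattern of comparabilities \emph{and incomparabilities}; up-directedness of the ideals $I_{q_n}$ only ever produces upper bounds, and the step you defer as ``Ramsey-style bookkeeping'' --- forcing the incomparabilities, keeping the fibres finite, and showing that order-scatteredness of $P$ rules out the alternative outcome (an $\eta$-chain inside $P$) --- is precisely the theorem of Pouzet and Zaguia, not a technicality. That this dichotomy is not routine is visible in the paper's own Theorem \ref{fin dim}, where even under the much stronger hypothesis of finite join-dimension the authors need Galvin's partition relation $\eta \rightarrow [\eta]_2^2$ and a delicate case analysis to extract either $\eta$ or $\Omega(\eta)$ from a non-scattered ideal lattice. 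As written, your backward direction asserts that scatteredness ``forces enough antichain structure'' without providing any mechanism, so it is a statement of intent rather than a proof.
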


 The following conjecture is stated in \cite{ChaPou1}.

\begin{con}
If $P$ is a join-semilattice, then  $\ideal  P$  is order-scattered iff $P$ is order-scattered, and neither  $\powerset^{<\omega}(\mathbb N)$, nor $\Omega(\eta)$,  is embeddable into $P$ as a join-semilattice.
\end{con}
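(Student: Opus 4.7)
The plan is to prove the two directions separately. The ``only if'' direction follows from routine embedding constructions, while the ``if'' direction reduces, via the Pouzet--Zaguia theorem above, to a join-semilattice strengthening that I expect to be the main technical obstacle.

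\emph{Necessity.} The map $x \mapsto \downarrow^{P} x$ is a poset embedding of $P$ into $\ideal P$, so if $\ideal P$ is order-scattered then so is $P$. Further, if $Q$ is a join-subsemilattice of $P$, then $I \mapsto \downarrow^{P} I$ is an order-embedding of $\ideal Q$ into $\ideal P$: it is order-preserving, and it is injective because the order of $Q$ is induced from $P$, so any $x \in I \setminus J$ lies in $\downarrow^{P} I \setminus \downarrow^{P} J$. Applied to $Q = \powerset^{<\omega}(\mathbb N)$ this embeds $\powerset(\mathbb N)$ into $\ideal P$, and $\powerset(\mathbb N)$ contains chains of order type $\eta$ (for example via $r \mapsto \{n : q_n < r\}$ for an enumeration $(q_n)$ of the rationals). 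Applied to $Q = \Omega(\eta)$, the fact recalled in the introduction---that $\Omega(\eta)$ is the compact semilattice of a non-order-scattered algebraic lattice, necessarily $\ideal \Omega(\eta)$ itself---supplies a non-scattered embedded copy. In either case $\ideal P$ is not order-scattered.

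\emph{Sufficiency.} Assume $\ideal P$ is not order-scattered; if $P$ is itself non-scattered we are done, so suppose $P$ is order-scattered. The Pouzet--Zaguia theorem stated above then yields a poset embedding $f : \Omega(\eta) \hookrightarrow P$. The task is to upgrade $f$ either to a join-semilattice embedding of $\Omega(\eta)$ or, failing that, to produce a join-subsemilattice isomorphic to $\powerset^{<\omega}(\mathbb N)$. One always has $f(x) \vee^{P} f(y) \leq f(x \vee^{\Omega} y)$. The dichotomy I would exploit is the following: if equality holds on a sufficiently rich subsystem of generators, then the restriction of $f$ to a sub-copy of $\Omega(\eta)$ is already a join-embedding (and by the equimorphism theory of Section \ref{omega}, any such sub-copy contains all of $\mathcal{L_D}$); if equality fails at many places, the strict drops $f(x) \vee^{P} f(y) < f(x \vee^{\Omega} y)$ should accumulate into an infinite independent family in $P$, yielding a join-subsemilattice isomorphic to $\powerset^{<\omega}(\mathbb N)$. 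The transition between the two alternatives has to be organized by a Ramsey-type argument on $\Omega(\eta)$ viewed, via Proposition \ref{prop:omegaeta}, as a join-subsemilattice of $\omega \times \eta$ with finite columns and non-scattered second projection.

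\emph{Where the difficulty lies.} The hard step is the last one. Pouzet--Zaguia delivers only a \emph{poset} embedding of $\Omega(\eta)$, and there is no a priori reason for such an embedding to respect joins. Controlling which joins collapse in $P$---in a way that channels strict inequalities into genuine independence of compact elements, while preserving scatteredness---requires exploiting both the rigidity of $\Omega(\eta)$ (its first projection is $\omega$, its columns are finite) and the join-semilattice structure of $P$. I expect the characterization of members of $\mathcal{L_D}$ up to equimorphism in Proposition \ref{prop:omegaeta}, together with the join-preservation item of Lemma \ref{equimorphy}, to be indispensable for extracting a uniform sub-copy cleanly; without them the Ramsey dichotomy risks producing only a poset-theoretic copy and the argument loops back on itself.
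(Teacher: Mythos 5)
The statement you are trying to prove is not proved in the paper at all: it is stated as a \emph{conjecture} (attributed to \cite{ChaPou1}), and the authors only establish special cases --- the modular case is quoted from \cite{ChaPou0}, and Theorem \ref{fin dim} proves it under the additional hypothesis $dim_\vee P=n<\omega$ (note that this hypothesis already rules out $\powerset^{<\omega}(\mathbb N)$ as a join-subsemilattice, since its join-dimension is infinite, which is why only $\Omega(\eta)$ appears there). Your necessity direction is fine and routine: $x\mapsto\downarrow x$ and $I\mapsto\downarrow^P I$ do give the required order-embeddings, and $\ideal\Omega(\eta)$ is indeed non-scattered. But your sufficiency direction is a programme, not a proof, and the step you yourself flag as ``the hard step'' is precisely the open problem. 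The Pouzet--Zaguia theorem only hands you a \emph{poset} copy of $\Omega(\eta)$ inside $P$, and no argument is given (nor is one known) that converts the failure of join-preservation at ``many'' pairs into an infinite independent set, i.e.\ into a join-subsemilattice copy of $\powerset^{<\omega}(\mathbb N)$, while the opposite branch yields a genuine join-embedding of $\Omega(\eta)$. As written, the ``Ramsey-type dichotomy'' is an expectation, not an argument: you do not specify the colouring, the homogeneity statement, or why the homogeneous configuration forces one of the two alternatives, and Proposition \ref{prop:omegaeta} and Lemma \ref{equimorphy} classify members of $\mathcal{L_D}$ up to equimorphism but say nothing about an arbitrary order-copy of $\Omega(\eta)$ sitting non-join-faithfully inside an arbitrary join-semilattice.

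For comparison, the way the paper gets around this difficulty in the finite-dimensional case is quite different from your plan: it does not start from a poset copy of $\Omega(\eta)$ in $P$. Instead it represents $L=\ideal P$ as an algebraic closure system, covers $\mi_{\Delta}(L)$ by $n$ maximal chains $C_k=I(X,\leq_k)$ (Theorem \ref{duality}, Lemma \ref{Ci}, Lemma \ref{join}), extracts from a non-scattered $C_0$ a subset of $X$ of type $\eta$, and then applies Galvin's partition theorem $\eta\rightarrow[\eta]^2_2$ (Theorem \ref{Galvin}) to the finitely many comparability patterns of the quasi-orders $\leq_k$; the resulting homogeneous set leaves only finitely many configurations, each of which visibly embeds either $\eta$ or $\Omega(\eta)$ into the compact semilattice (Claim \ref{claim:cases}). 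That argument uses the finiteness of the number of chains in an essential way, which is exactly why the general conjecture --- the statement you set out to prove --- remains open, and why the paper poses as a further problem even the extension to finite order-dimension.
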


It must be noticed that while $\Omega(\eta)$ is embeddable into $\powerset^{<\omega}(\mathbb N)$ as a poset, it is not embeddable as a join-semilattice. In fact, as was shown in \cite{ChaPou2} Corollary 1.8 p.4:

\begin{thm} \label{poset} A join-subsemilattice $P$ of $\powerset^{<\omega}(\mathbb N)$ contains either  $\powerset^{<\omega}(\mathbb N)$ as a join-semilattice or is well-quasi-ordered (that is $P$ contains neither an  infinite antichain nor an infinite descending chain). 
\end{thm}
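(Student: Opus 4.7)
The plan has three stages: reduce to producing an independent infinite subfamily, extract one from any infinite antichain, and build the embedding. First, $\powerset^{<\omega}(\mathbb N)$ has no infinite strictly descending chain, since cardinalities in such a chain would form an infinite strictly descending sequence of natural numbers. Since $P$ inherits the order, $P$ also has none, so ``$P$ is well-quasi-ordered'' reduces to ``$P$ has no infinite antichain.'' It therefore suffices to show that an infinite antichain in $P$ produces an injective join-preserving map $\powerset^{<\omega}(\mathbb N) \to P$.

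Let $(A_n)_{n \in \mathbb N}$ be an infinite antichain in $P$. The main step is to extract an infinite subfamily that is \emph{join-independent}, meaning no $A_m$ is contained in the union of the others. I would accomplish this by applying the Erd\H{o}s--Rado canonical Ramsey theorem to the pair coloring $\chi(\{n, m\}) := A_n \cap A_m$ on $[\mathbb N]^2$, whose colors are finite subsets of $\mathbb N$. On an infinite canonical block $N \subseteq \mathbb N$, four patterns are possible. The injective regime is excluded because the intersections $A_n \cap A_m$ (for $m \in N \setminus \{n\}$) would comprise infinitely many distinct subsets of the finite set $A_n$. The regime depending only on the larger index yields a $\subseteq$-decreasing function of finite sets, which stabilizes on an infinite subset and thus reduces to the constant regime; the constant regime produces a genuine sunflower with kernel $K$ and petals $B_n := A_n \setminus K$ that are pairwise disjoint and nonempty (the latter by the antichain hypothesis). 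The regime depending only on the smaller index yields a $\subseteq$-increasing kernel function $f(n)$; if $f$ stabilizes we again reduce to the constant regime, and if not, a short computation using $A_i \cap A_j = f(i)$ for $i < j$ shows that the complements $B_n := A_n \setminus f(n)$ are pairwise disjoint and nonempty. In every surviving case we obtain an infinite subfamily with pairwise disjoint nonempty petals $B_n \subseteq A_n$, and such a subfamily is join-independent: $B_m$ is nonempty and disjoint from every $A_n$ with $n \neq m$, so $A_m \not\subseteq \bigcup_{n \neq m} A_n$.

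With an independent infinite subfamily $(A_{n_k})_{k \geq 0}$ in hand, I would define
\[
\Phi : \powerset^{<\omega}(\mathbb N) \to P, \qquad \Phi(S) := A_{n_0} \cup \bigcup_{k \in S} A_{n_{k+1}}.
\]
This map lands in $P$ by closure under finite joins, preserves joins by distributivity of union, and is injective: if $\Phi(S) = \Phi(T)$ and $k \in S \setminus T$, then $A_{n_{k+1}} \subseteq \Phi(T)$ expresses $A_{n_{k+1}}$ as a subset of a finite union of other independent members, a contradiction. The image of $\Phi$ is thus a join-subsemilattice of $P$ isomorphic to $\powerset^{<\omega}(\mathbb N)$.

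The step I expect to demand the most care is the canonical-Ramsey case analysis, specifically the ``depends on smaller index'' regime when $f$ does not stabilize: the right object to focus on is the complement $A_n \setminus f(n)$ rather than $A_n$ itself, and verifying pairwise disjointness of these complements is the small technical heart of the argument. The rest of the construction is routine.
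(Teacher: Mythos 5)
Your argument is correct, but there is nothing in the paper to measure it against: Theorem \ref{poset} is quoted from \cite{ChaPou2} (Corollary 1.8 there) and no proof is reproduced in this paper, so your canonical-Ramsey/sunflower argument is a self-contained substitute rather than a variant of an in-paper proof; the cited source derives it inside a general study of well-founded algebraic lattices, whereas your route is elementary and purely combinatorial. The skeleton checks out: descending chains are impossible in $\powerset^{<\omega}(\mathbb N)$, the injective canonical regime is excluded because infinitely many distinct colors would sit inside the finite set $A_n$, the kernel functions are indeed monotone ($g(m')\subseteq g(m)$ in the max regime, $f(n)\subseteq f(n')$ in the min regime, both by evaluating the same color two ways), and the anchored embedding $\Phi(S)=A_{n_0}\cup\bigcup_{k\in S}A_{n_{k+1}}$ correctly avoids assuming that $P$ has a least element. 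One sentence should be tightened: join-independence does \emph{not} follow from ``pairwise disjoint nonempty petals'' alone, since $B_m$ could a priori meet $A_n$ inside the kernel part of $A_n$; what you need, and what does hold in both surviving regimes, is that $B_m$ is disjoint from every other \emph{whole} set $A_n$. In the constant regime this holds because $A_n=K\cup B_n$ and $B_m\cap K=\emptyset$; in the min regime, for $m<n$ one has $B_m\cap A_n\subseteq (A_m\cap A_n)\setminus f(m)=f(m)\setminus f(m)=\emptyset$, and for $n<m$ one has $B_m\cap A_n\subseteq A_m\cap A_n=f(n)\subseteq f(m)$, which $B_m$ avoids; note these computations work whether or not $f$ stabilizes, so that case split is unnecessary. (Also, under the strict form of the canonical theorem the max regime is vacuous, since $g$ would be injective yet $\subseteq$-decreasing on finite sets; your stabilization argument covers the weaker ``depends only on the max'' reading equally well.) With those one-line verifications made explicit, the proof is complete.
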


More generally, if the lattice $L: =\ideal  P$ is  modular then  $\Omega(\eta)$ cannot appear as a join-subsemilattice of $P$. In this case, the conjecture above was proved in \cite{ChaPou0}. When $L$ is a convex geometry, as was shown in section \ref{omega}, 
$\Omega(\eta)$ may be a join-subsemilattice of $P$.

We are aiming at proving the conjecture for arbitrary algebraic convex geometries, but for now we restrict the result to convex geometries $L:=\ideal  P$, for which $P$ has a finite $\vee$-dimension.

\begin{thm}\label{scat-variety}
Let $P$ be a semilattice with $dim_\vee(P)=n< \infty$. Then the following properties are equivalent:
\begin{enumerate}[(i)]
\item $P$ is embeddable by a join-preserving map  into a product of $n$ scattered chains.
\item $P$ is embeddable by a join-preserving map  into a product of finitely many scattered chains.
\item $\ideal  P$ is topologically scattered;
\item $\ideal  P$ is order-scattered;
\item  $\mi_{\Delta}(\ideal  P)$ is order-scattered;
\item  $\overline{\mi_{\Delta}(\ideal  P)}$ is topologically scattered.
\end{enumerate}
\end{thm}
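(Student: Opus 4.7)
The plan is to close the cycle $(i) \Rightarrow (ii) \Rightarrow (iii) \Rightarrow (iv) \Rightarrow (v) \Rightarrow (i)$ and adjoin $(vi)$ via $(iii) \Rightarrow (vi) \Rightarrow (v)$. The implications $(i) \Rightarrow (ii)$, $(iii) \Rightarrow (iv)$ (Corollary~\ref{top->order}), and $(iv) \Rightarrow (v)$ (since $\mi_{\Delta}(\ideal P) \subseteq \ideal P$) are immediate, so the real work is in $(ii) \Rightarrow (iii)$, $(v) \Rightarrow (i)$, and the two edges involving $(vi)$.

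For $(ii) \Rightarrow (iii)$ the idea is to realise $\ideal P$ as a continuous image of a compact topologically scattered space and then invoke Lemma~\ref{product}(2). Given a join-preserving embedding $f = (f_k)_{k \leq m} \colon P \hookrightarrow \prod_{k \leq m} C_k$ into scattered chains, each $I(C_k)$ is topologically scattered by Proposition~\ref{lem:scattered chain}, so $\prod_k I(C_k)$ is compact and topologically scattered by Tychonoff together with Lemma~\ref{product}(4). I would define $h \colon \prod_k I(C_k) \to \ideal P$ by $h((J_k)_k) := \{p \in P : f_k(p) \in J_k \text{ for every } k \leq m\}$. Order-preservation of $f$ makes this set downward-closed, and join-preservation (combined with the fact that initial segments of a chain are closed under binary suprema) makes it closed under $\vee$. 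Surjectivity uses the embedding part of the hypothesis: given $I \in \ideal P$ and $J_k := \downarrow f_k(I)$, if each $f_k(p) \leq f_k(q_k)$ with $q_k \in I$, then $f(p) \leq f(q_1 \vee \dots \vee q_m)$ coordinatewise, forcing $p \leq q_1 \vee \dots \vee q_m \in I$. Continuity is routine since each subbasic clopen $\{I : p \in I\}$ or its complement pulls back to a finite intersection or union of clopens in the product.

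For $(v) \Rightarrow (i)$, Proposition~\ref{prop:finite-dimension} upgrades $dim_{\vee}(P) = n$ to $dim_{\nabla}(\ideal P) = n$, and Corollary~\ref{cor:dimension} with Dilworth's theorem covers $\mi_{\Delta}(\ideal P)$ by $n$ chains $D_1, \dots, D_n$; order-scatteredness of $\mi_{\Delta}(\ideal P)$ descends to each $D_i$, and Proposition~\ref{lem:scattered chain} then makes each $I(D_i)$ an order-scattered complete chain. For each $i$ I would define $\phi_i \colon P \to I(D_i)$ by $\phi_i(p) := \{m \in D_i : p \notin m\}$; this is an initial segment of $D_i$ because $m' \subseteq m$ in $D_i$ forces $p \notin m \Rightarrow p \notin m'$, and $\phi_i$ is join-preserving because $p \vee q \in m$ iff both $p, q \in m$. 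The product $\Phi := (\phi_i)_i \colon P \to \prod_i I(D_i)$ is injective by meet-density of $\mi_{\Delta}(\ideal P)$ in $\ideal P$: if $\phi_i(p) = \phi_i(q)$ for all $i$, then $p$ and $q$ belong to exactly the same elements of $\mi_{\Delta}(\ideal P)$, so $\downarrow p = \downarrow q$. Since an injective join-preserving map of join-semilattices is automatically an order-embedding, $\Phi$ realises $P$ inside a product of $n$ order-scattered chains.

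For the $(vi)$-branch, $(iii) \Rightarrow (vi)$ is immediate from Lemma~\ref{product}(1) since $\overline{\mi_{\Delta}(\ideal P)}$ is a subspace of $\ideal P$. For the contrapositive of $(vi) \Rightarrow (v)$, if $C = \{c_q : q \in \mathbb{Q}\} \subseteq \mi_{\Delta}(\ideal P)$ is an $\eta$-chain, then for each $r \in \mathbb{R}$ the ideal $c_r := \bigcup_{q < r,\, q \in \mathbb{Q}} c_q$ lies in the topological closure $\overline{C}$ because every basic open $U_{F,G}$ containing $c_r$ also contains $c_q$ for all $q$ in a cofinal interval of $\mathbb{Q} \cap (-\infty, r)$; hence $\{c_r : r \in \mathbb{R}\}$ is a complete subchain of $\overline{\mi_{\Delta}(\ideal P)}$ order-embedding $\mathbb{R}$, and by Proposition~\ref{lem:scattered chain} it cannot be topologically scattered in its subspace topology. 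The main obstacle I anticipate is precisely this last step: the relative topology on the $\mathbb{R}$-indexed subchain inherited from $\ideal P$ has to be matched carefully with the order topology, which requires tracking the ``entry points'' in $P$ of the points $c_r$ as $r$ varies so that order non-scatteredness really does force topological non-scatteredness.
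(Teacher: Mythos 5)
Your proposal is correct, and it reorganizes the paper's cycle rather than reproducing it. The step $(ii)\Rightarrow(iii)$ is essentially the paper's argument: your map $h((J_k)_k)=\{p: f_k(p)\in J_k \ \forall k\}$ is exactly the paper's $g(I):=f^{-1}(I)$ read through the isomorphism $\ideal\bigl(\Pi_k C_k\bigr)\cong \Pi_k \ideal C_k$, followed by the same appeal to Proposition \ref{lem:scattered chain} and items (2),(4) of Lemma \ref{product}. After that the routes diverge: the paper closes the loop as $(iii)\Rightarrow(vi)\Rightarrow(v)\Rightarrow(iv)\Rightarrow(i)$, proving $(v)\Rightarrow(iv)$ by re-running the chain-cover construction of Theorem \ref{duality} together with Lemma \ref{q in product}, and then getting $(iv)\Rightarrow(i)$ by showing that the chains in any witnessing join-embedding must themselves be scattered (pulling ideals back along each projection). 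You instead close the loop as $(iii)\Rightarrow(iv)\Rightarrow(v)\Rightarrow(i)$, merging the paper's last two steps into a single direct construction: covering $\mi_{\Delta}(\ideal P)$ by $n$ chains $D_i$ via Proposition \ref{prop:finite-dimension} and Corollary \ref{cor:dimension}, and embedding $P$ join-preservingly into $\Pi_i I(D_i)$ by $\phi_i(p)=\{m\in D_i: p\notin m\}$, with injectivity from meet-density of the completely meet-irreducibles. This is the Theorem \ref{duality} mechanism specialized to $P$ itself, and it buys a slightly shorter cycle; the paper's arrangement buys, in exchange, the statement $(iv)\Rightarrow(i)$ in the reusable form ``any $n$ chains realizing $dim_\vee(P)=n$ are automatically scattered.'' Your treatment of $(vi)\Rightarrow(v)$ is in fact more explicit than the paper's, which only cites Corollary \ref{top->order} (stated there for algebraic lattices, so not literally applicable to the closed subspace $\overline{\mi_{\Delta}(\ideal P)}$); what is really needed is your claim that the topological closure of an $\eta$-chain in $\mathbf{2}^P$ is not scattered.

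Two small repairs, neither a real gap. First, in $(ii)\Rightarrow(iii)$ an ideal is by definition non-empty, so $h((J_k)_k)$ could fail to be in $\ideal P$ for small $J_k$; normalize as the paper does (adjoin least elements with $f(0)=(0_k)_k$, or restrict the domain to non-empty initial segments). Second, the obstacle you anticipate in the last step of $(vi)\Rightarrow(v)$ dissolves if you argue directly from the definition of scatteredness instead of invoking Proposition \ref{lem:scattered chain} on the subchain: since each $d_r:=\bigcup_{q<r}c_q$ is an increasing union, any basic neighbourhood $U_{F,G}$ of $d_r$ has $F$ finite, hence $F\subseteq d_{s_0}$ for some $s_0<r$, and then $d_s\in U_{F,G}$ for every $s\in(s_0,r)$; so no point of $\{d_r: r\in\mathbb R\}$ is isolated in that set, and a non-empty subset of $\overline{\mi_{\Delta}(\ideal P)}$ without isolated points already witnesses non-scatteredness. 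No comparison of the relative topology with the order topology is needed (and indeed an arbitrary $\mathbb R$-indexed chain of sets can be discrete in the subspace topology, so the left-continuity of your $d_r$'s is exactly the point).
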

\begin{proof} 
\noindent $(i)\Rightarrow (ii)$. Obvious. 

\noindent $(ii)\Rightarrow (iii)$. Let $f$ be a join-embedding of $P$ into a finite product of chains say $Q=\Pi_{k\in K} C_k$. W.l.o.g we may  suppose that $P$ and each $C_k$ have a least element $0$ and $0_k$ respectively, and that  $f(0):= (0_k)_{k\in K}$.  Since $\ideal C_k$ is isomorphic to $I(C_k^*)$ where $C_k^*:= C_k\setminus \{0_k\}$, if  $C_k$ is order-scattered, then by  Proposition  \ref{lem:scattered chain}, $\ideal  C_k$ is topologically scattered. This being true for all $k\in K$, $\Pi_{k\in K} \ideal  C_k$ is topologically scattered  from $(4)$ of Lemma \ref{product}. As noticed in the proof of Proposition \ref {prop:finite-dimension}, $\ideal Q$ is isomorphic to $\Pi_{k\in K} \ideal  C_k$. Hence, in our case, $\ideal  Q$ is topologically scattered.   Since $f$ is join-preserving,   the set $f^{-1}(I)$ belongs to $\ideal  P$ for $I\in \ideal  Q$. Let   $g$ be the map from $\ideal  Q$ to $\ideal  P$ defined by setting $g(I):= f^{-1}(I)$. As it is easy to check, this map  is continuous. Hence by $(2)$ of Lemma \ref{product}, the image of $\ideal  Q$ is topologically scattered. Since $f$ is an embedding, $g$ is surjective (indeed, $I= g(\downarrow f(I))$ for every $I\in \ideal  P$). Hence $\ideal  P$ is topologically scattered as required.

\noindent $(iii) \Rightarrow (vi)$. Obvious: $\overline{\mi_{\Delta}(\ideal  P)}$ is a subset of $\ideal  P$. 

\noindent $(vi) \Rightarrow (v)$.  Since $\overline{\mi_{\Delta}(\ideal  P)}$ is topologically scattered it is order-scattered (see Corollary \ref{top->order}). Hence $\mi_{\Delta}(\ideal  P)$  is order-scattered. 

\noindent $(v) \Rightarrow (iv)$. The proof follows the same lines as the proof of Theorem \ref{duality}. Since $dim_\vee(P)=n< \infty$, by Theorem \ref{cor:dimension} $cov(\mi_{\Delta}(\ideal  P))=n$. Let us cover $\mi_{\Delta}(\ideal  P)$ by $n$ chains $C_k$, $k<n$. Close each $C_k$ by intersection. Let $\overline C_k$ be the resulting chain. Since $\mi_{\Delta}(\ideal  P)$ is order-scattered, $ C_k$ is order-scattered. Moreover,  $\overline C_k$ is order-scattered. Hence, by Lemma \ref{q in product},  $L:= \Pi_{k\in K} \ideal  C_k$ is order-scattered. For $I\in \ideal P$ let  $I_k$ be the least element $J$ of $\overline C_k$ such that $I\subseteq J$ and let $f(I):= (I_k)_{k\in K}$. The map $f$ is an embedding of $\ideal  P$ into $L$.  Hence $\ideal  P$ is order-scattered.

\noindent $(iv) \Rightarrow (i)$. Let $f$ be a join-embedding of $P$ into a  product of $n$ chains,  say $Q=\Pi_{k<n} C_k$. For $k<n$, let $f_k$ be the projection map from $P$ onto $C_k$. W.l.o.g we may suppose each $f_i$ surjective. Let $I\in \ideal  C_i$. Since $f$ is join-preserving, $f^{-1}(J)\in \ideal  P$. Let   $g_k$ be the map from $\ideal  C_k$ to $\ideal  P$ defined by setting $g_k(I):= f_k^{-1}(I)$. This map is order-preserving and one-to-one. Since $\ideal  P$ is order-scattered, $\ideal  C_k$ is order-scattered, hence $C_k$ is order-scattered. 

\end{proof}

\begin{rem} The following question (see \cite{PoSiZa2}) is unanswered. Does a poset  $P$ which embeds in a product of $k$ scattered chains and in a product of $n$ chains embeds into a product of $n$ scattered chains?   Equivalence between $(i)$ and $(ii)$ states that the answer is positive if we consider join-semilattices and join-embeddings. \end{rem}

The proof of  Theorem \ref{fin dim} below is based on  a famous unpublished result of Galvin about partitions of pairs of the rationals which can be stated in terms of the "bracket" relation by:
\begin{equation}\label{eq:bracket}
\eta \rightarrow [\eta]_{2}^2. 
\end{equation} 
This relation means   that if the pairs of rationals are divided into finitely many classes then there is an infinite subset of the rationals which is isomorphic to the rationals and such that all pairs belong to the union of two classes. 

An alternative statement is the following: 

 \begin{thm}\label{Galvin}
Suppose the pairs of rationals be  divided into finitely many classes $A_1,\dots,A_n$. Fix an  ordering on the rationals with order type $\omega$. Then there is a subset $X$ of rationals of order type $\eta$ and indices $i,j$ (with possibly $i=j$) such that all pairs of $X$ on which the natural order on $\mathbb Q$  and the given order  coincide belong to $A_i$, and all pairs of $X$ on which the two orders disagree belong to $A_j$.
\end{thm}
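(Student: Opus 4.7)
The plan is to derive this statement directly from Galvin's relation \eqref{eq:bracket} by a refine-and-dichotomize argument. Given the partition $A_1,\dots,A_n$ of $[\mathbb Q]^2$ and the fixed $\omega$-ordering $\leq_\omega$ of $\mathbb Q$, I first refine the partition by recording, for each pair, whether the two orders agree on it: for every $k\leq n$, set
\[
A_k^{+} := \{\{p,q\}\in A_k : p<q \iff p<_\omega q\}, \qquad A_k^{-} := A_k\setminus A_k^{+}.
\]
This produces a partition of $[\mathbb Q]^2$ into at most $2n$ classes, to which I then apply \eqref{eq:bracket}. This yields a subset $X\subseteq \mathbb Q$ of order type $\eta$ such that $[X]^2$ meets at most two of the classes $A_k^{\pm}$.

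The remaining work is to argue that one of these two classes must carry a $+$ superscript and the other a $-$ superscript (with the underlying indices possibly equal), which is precisely what the theorem asserts. For this I rely on the observation that $X$ cannot be homogeneously agreeing nor homogeneously disagreeing with $\leq_\omega$: if every pair of $X$ agreed with $\leq_\omega$, then the restriction of $\leq_\omega$ to $X$ would coincide with the natural order on $X$, so the $\omega$-ordering would contain a subchain of type $\eta$, which is impossible because a chain of type $\omega$ has no infinite descending subchain. Symmetrically, if every pair disagreed, the restriction of $\leq_\omega$ to $X$ would be the reverse of $\eta$, again containing $\leq_\omega$-descending chains, contradicting that $\leq_\omega$ has type $\omega$. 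Hence the (at most) two classes delivered by \eqref{eq:bracket} cannot both have $+$ superscript nor both have $-$ superscript; they must be of the form $A_i^+$ and $A_j^-$ for some $i,j$ (with $i=j$ permitted), giving the claimed $i$ and $j$.

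The main obstacle is of course Galvin's theorem \eqref{eq:bracket} itself, which I use as a black box; without it the conclusion of the alternative statement is not accessible by elementary methods. Everything that I add on top is a routine refinement of the partition followed by the elementary order-type observation that neither $\eta$ nor its reverse embeds into $\omega$. Finally, I would double-check the degenerate possibilities: if $[X]^2$ met only one class $A_k^{\pm}$, that would amount to $X$ being homogeneously agreeing or homogeneously disagreeing, which the same dichotomy rules out, so exactly two classes are hit and the argument closes.
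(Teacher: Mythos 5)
Your proposal is correct. Note that the paper itself contains no proof of Theorem \ref{Galvin}: the authors present it only as an ``alternative statement'' of Galvin's bracket relation (\ref{eq:bracket}), pointing to \cite{fraisse} for the proof of Galvin's result and to \cite{PoSiZa} for this formulation. What you have done is supply the reduction that the paper leaves implicit, and it works: splitting each $A_k$ into the agreeing part $A_k^{+}$ and the disagreeing part $A_k^{-}$ gives a finite partition of $[\mathbb Q]^2$, the relation (\ref{eq:bracket}) yields a copy $X$ of $\eta$ whose pairs lie in the union of at most two refined classes, and the elementary observation that neither $\eta$ nor its reverse embeds into a chain of type $\omega$ rules out the case where the surviving classes carry the same superscript (including the degenerate case of a single class). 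Since an agreeing pair of $X$ cannot lie in a class with superscript $-$ and a disagreeing pair cannot lie in one with superscript $+$, the two surviving classes must be of the form $A_i^{+}$ and $A_j^{-}$, so all agreeing pairs of $X$ land in $A_i$ and all disagreeing pairs in $A_j$, which is exactly the statement. So your route and the paper's are the same in substance --- both take Galvin's unpublished theorem as a black box --- but you have made explicit the derivation of the ``alternative statement'' from the bracket relation, which the paper merely asserts with a citation.
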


The proof of Galvin's result can be found in \cite{fraisse}; Theorem \ref{Galvin} was used in \cite{PoSiZa}.

\begin{thm}\label{fin dim} Let P be a  join-semilattice with $0$ and $L:=\ideal  P$ be the lattice  of ideals of $P$ ordered by inclusion. If $dim_\vee P=n<\omega$, then $L$ is order-scattered iff $P$ is order-scattered and $\Omega(\eta)$ is not a join-subsemilattice of $P$.
\end{thm}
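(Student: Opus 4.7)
The necessity direction is nearly immediate. The map $p \mapsto \downarrow p$ is a join-preserving order-embedding of $P$ into $L = \ideal P$, so if $L$ is order-scattered then so is $P$. If additionally a join-embedding $g : \Omega(\eta) \hookrightarrow P$ existed, then $I \mapsto \downarrow g(I)$ would order-embed $\ideal \Omega(\eta)$ into $L$. By Theorem~\ref{mainduplex}, $\Omega(\eta)$ is equimorphic as a join-semilattice to $K(C(B))$ for any duplex $C(B)$, so $\ideal \Omega(\eta)$ is equimorphic to the algebraic lattice $\ideal K(C(B)) \cong C(B)$. But $C(B)$ is not order-scattered since the second component of $B$ is non-scattered by the very definition of a duplex. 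This contradicts $L$ being order-scattered.

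For the sufficiency direction, assume $P$ is order-scattered and $\Omega(\eta) \not\leq_{\vee} P$. By Theorem~\ref{scat-variety} (equivalence $(i) \Leftrightarrow (iv)$) it suffices to exhibit a join-embedding of $P$ into a product of $n$ scattered chains. Fix any join-embedding $f : P \hookrightarrow C_1 \times \cdots \times C_n$ realising $dim_\vee P = n$, with surjective projections $\pi_k(P) = C_k$. I argue each $C_k$ must already be scattered. Suppose for contradiction that $C_1$ contains a subset $A$ of order type $\eta$; for each $y \in A$ pick a preimage $p_y \in P$ with $\pi_1(p_y) = y$. The $p_y$'s are then pairwise distinct, and $p_{y'} \not\leq p_y$ whenever $y <_\eta y'$ in $A$.

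Fix an $\omega$-enumeration of $A$ and apply Galvin's theorem (Theorem~\ref{Galvin}) to the coloring of pairs $y <_\eta y'$ of $A$ by the sign-vector $(\mathrm{sgn}(\pi_i(p_y) - \pi_i(p_{y'})))_{i=2}^{n} \in \{-,0,+\}^{n-1}$. This yields a sub-$\eta$-copy $A^* \subseteq A$ on which all pairs whose $\eta$-order agrees with the $\omega$-enumeration share one sign-vector $c_A$ and all disagreeing pairs share another $c_D$. If both $c_A$ and $c_D$ lie in $\{-,0\}^{n-1}$, then $p_y \leq p_{y'}$ for every $y <_\eta y'$ in $A^*$, yielding an $\eta$-chain in $P$ and contradicting order-scatteredness. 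Otherwise, some coordinate $j \geq 2$ exhibits a strict ``+'' in $c_A$ or $c_D$; one further round of Galvin refinement produces $A^{**} \subseteq A^*$ of order type $\eta$ on which each remaining $\pi_i$ is either constant or strictly $\eta$-monotone. From the stabilised structure one reads off, via Lemma~\ref{duplexisomorphism} and Theorem~\ref{thm:compact}, a join-subsemilattice of $P$ which embeds into a product of two chains and satisfies conditions (1)--(3) of Proposition~\ref{prop:omegaeta}: the first projection has order type $\omega$ (from the $\omega$-index of the $\omega$-enumeration), the second is non-scattered (coming from the $\eta$-order on $A^{**}$), and horizontal fibers are finite (forced by the finiteness of $\omega$-initial segments combined with the stabilisation). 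By Theorem~\ref{mainduplex} this subsemilattice is equimorphic to $\Omega(\eta)$, producing $\Omega(\eta) \leq_\vee P$ and contradicting our standing hypothesis.

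The principal technical obstacle is the final step of converting the Galvin-stabilised structure into an actual join-embedding of $\Omega(\eta)$. The join in $P$ of two elements $p_{x'}\vee p_{x''}$ and $p_{y'}\vee p_{y''}$ (for pairs satisfying the defining constraints of $\Delta(L,\mathrm{id})$ in Lemma~\ref{duplexisomorphism}) takes $\min_\eta$ and $\max_\eta$ of the pair-entries, whereas the abstract $\Omega(\eta)$-join takes $\max_{\omega\text{-enum}}$ and $\max_\eta$; reconciling these requires a careful choice of the $\omega$-enumeration of $A^{**}$ so that on the relevant pairs the $\omega$-maximum and the $\eta$-minimum coincide, which in turn draws on both the density of the residual $\eta$ and the stabilisation of \emph{every} non-distinguished coordinate under the Galvin refinement.
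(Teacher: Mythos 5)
Your necessity argument is correct (if somewhat roundabout: the paper simply uses that $\ideal \Omega(\eta)$ contains a chain of type $\eta$, while you reroute through Theorem \ref{mainduplex} and the duplex $C(B)$ -- both work). Your sufficiency architecture is also reasonable and close in spirit to the paper's: the paper argues contrapositively inside $L=\ideal P$, covering $\mi_{\Delta}(L)$ by $n$ maximal chains (Lemmas \ref{Ci}, \ref{join}), extracting an $\eta$-set with an auxiliary $\omega$-order, applying Galvin's bracket relation, and then doing a case analysis (Claims \ref{claim:galvin}--\ref{claim:cases}) that produces either an $\eta$-chain or a copy of $\Omega(\eta)$ in $P$; you instead work with an arbitrary join-embedding of $P$ into $\Pi_{k\le n}C_k$ and want to show each image chain is scattered, then invoke Theorem \ref{scat-variety}$(i)\Rightarrow(iv)$. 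That reduction is legitimate, and since $f$ is join-preserving you may indeed compute all joins coordinatewise on the points $f(p_y)$.

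The gap is in the combinatorial heart of the argument. First, the claim that a further Galvin refinement yields $A^{**}$ on which every remaining coordinate is ``constant or strictly $\eta$-monotone'' is false: a coordinate that orders $A^{*}$ like the fixed $\omega$-enumeration (or its dual) keeps that behaviour on every infinite subset, and this is precisely the behaviour that generates $\Omega(\eta)$ -- if all coordinates really were constant or $\eta$-monotone, there would be no source for the order-type-$\omega$ first projection you later need for Proposition \ref{prop:omegaeta}(1). Second, the assertion that the non-chain case always yields $\Omega(\eta)$ is wrong as stated: with the five possible stabilised behaviours (constant, $\eta$-increasing, $\eta$-decreasing, enumeration-increasing, enumeration-decreasing) several patterns (e.g.\ an $\eta$-decreasing or dual-enumeration coordinate with no enumeration-increasing one) give an $\eta$-chain in $P$ rather than $\Omega(\eta)$, and when an enumeration-increasing coordinate coexists with other non-constant coordinates the projection of the generated join-subsemilattice onto two chains need not be a join-embedding, so conditions (1)--(3) of Proposition \ref{prop:omegaeta} do not follow without further work. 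The paper resolves exactly this by its eight-case analysis in Claim \ref{claim:cases}, using the device of adjoining a fixed generator $x_0$ (chosen as the $\leq_n$-least element, or $\eta$-below the rest) so that the parasitic coordinates become constant on the sets $\phi'(\{x_0,x\})$; your closing paragraph acknowledges this reconciliation of joins as an unresolved ``technical obstacle,'' and the proposed remedy (a careful choice of the $\omega$-enumeration) is not the right mechanism -- no choice of enumeration makes the $\omega$-maximum coincide with the $\eta$-minimum on an $\eta$-set. As it stands, the sufficiency direction is therefore incomplete at its decisive step.
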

\begin{proof}

Let $Q\in \{\mathbb Q, \Omega(\eta)\}$. If $Q$ is embeddable into $P$  then $\ideal  Q$ is embeddable into $\ideal  P$. Since $\ideal  Q$ contains a chain of order-type $\eta$, $\ideal  P$ is not order-scattered. This proves that if $L$ is order-scattered then $P$ is order-scattered and $\Omega(\eta)$ is not a join-subsemilattice of $P$.

For the converse, we find convenient to view  $L$ as the lattice $C:=Cl(X, \phi)$ of an algebraic closure system $(X, \phi)$ such that $\emptyset \in C$. The join-semilattice $P$ is isomorphic to $K$, the collection of finitely generated closed subsets of $(X, \phi)$. 
 
Since $dim_\vee P=n<\omega$, there are $n$ maximal chains $C_k$, $k<n$,   of $L$ whose union covers $M:=\mi_{\Delta}(\ideal  P)$, see Proposition \ref{prop:finite-dimension}.  By Lemma  \ref{Ci} for each $k$ there is some total quasi-order $\leq_k$ such that $C_k=I(X, \leq_k)$. Now suppose that $C$ is not order-scattered. By Lemma  \ref{join}  $\Pi_{k\in K}C_k$ is not scattered. By Lemma \ref{q in product} some $C_k$ is not scattered. Without loss of generality we assume that $C_0$ is not scattered.  

%

\begin{claim}\label{claim:I(eta)}
 $X_0:= (X,\leq_0)$ is not scattered.
\end{claim}
\noindent {\bf Proof of Claim \ref{claim:I(eta)}}. 
According to Claim \ref{Ci}, $C_0= I(X_0)$. The fact that $C_0$ is not scattered implies that $X_0$ is not scattered. This is a well known fact. For reader convenience, we give a proof. We use the fact that the chain of rationals contains a copy of every countable chain (Cantor), hence a copy of a chain  of order type $2\cdot \eta$. Thus 
 $C_0$ contains  a sub-chain $D$ of order type $2\cdot \eta$. We may write the elements  of 
$D$ as $d_{rs}$ with  $r \in \mathbb{Q},s \in \{0,1\}$, these elements  being ordered  by  $d_{rs}<_0d_{r's'}$  if $r<r'$, or $r=r'$ and $s=0,s'=1$. For each $r \in \mathbb{Q}$, pick  $x_r \in X$ such that $d_{r0}\leq_0 x_r \leq_0 d_{r1}$. If $r<r'$ are two rationals, then $x_{r}\leq_0 x_{r'}$, but $x_{r'}\not \leq_0 x_r$. Hence, $x_r <_0x_{r'}$. 
Thus, the set of  $x_r$'s forms a chain of type $\eta$ in  $X_0$.  							\hfill $\Box$

Let $A \subseteq X$  such that the order $\leq_0$  induced on $A$ has  order type $\eta$ and let $\leq_n$ be a linear order of type $\omega$ on $A$. We denote by $[A]^2$ the set of pairs of distinct elements of $A$ and identify  each such pair to an  ordered pair $(x,y)$ such that 
%
%
$x<_0y$. To each $u:=(x,y)\in A^2$, we assign a sequence $\epsilon (u):=  \langle \epsilon_1(u),\dots,\epsilon_{n}(u)\rangle$, where $\epsilon_k(u)=1$ if $x<_ky$,  $\epsilon_k(u)=0$ if  $x>_ky$,  and $\epsilon_k(u)=2$ if $x\leq _ky$ and $y\leq_kx$. 

Since $\epsilon_n(u)\not = 2$,  this defines a  partition of distinct pairs of $A$ into at most $3^{n-1}.2$ classes. 

\begin{claim}\label{claim:galvin} There is a subset $A'$ of $A$ such that the order $\leq'_0$ induced on $A'$ by $\leq _0$ has order type $\eta$ and  such that for each  $k$, $1\leq k<  n$, the restriction $\leq'_k$ of  $\leq_k$ to $A'$  is either $\leq'_0$, its dual or $\leq' _{n}$ (the restriction to $A'$ of $\leq_n$), its dual, or $A'^2$, the complete relation on $A'$.
\end{claim}
\noindent {\bf Proof of Claim \ref{claim:galvin}.}
According to the bracket relation in (\ref{eq:bracket})  one can find a subset $A'\subseteq A$, that has type $\eta$ with respect to $\leq_0$ such that the range of the restriction of $\epsilon$ to distinct pairs of $A'$ has at most two elements. Hence, there are  
  $n$-sequences $\alpha:=\langle \alpha_1, \dots, \alpha_n\rangle$ and $\beta:=\langle \beta_1, \dots, \beta_n\rangle$  such that  $\epsilon (u)\in \{ \alpha, \beta\}$ for every $u$ (and the values $\alpha$ and $\beta$ are attained). 
  Necessarily, $\alpha_n\not =\beta_n$. Indeed, otherwise, $\epsilon_n(u)$ would be constant, meaning that $\leq'_n$ to $A$ would coincide with $\leq'_0$ or with its dual; this is impossible since $\leq'_0$ has order type $\eta$.  Hence $0,1\in \{ \alpha_n, \beta_n\}$. With no loss of generality, we may assume that $\alpha_n=1$ and $\beta_n=0$. Consequently, for every $u$ and every $k$, $1\leq k< n$,  we have:
  \begin{equation}\label{eq:epsilon1}
  \epsilon_k(u)=\alpha_k\;  \text{if}\;   \epsilon_n(u)=1
  \end{equation}
  and 
  \begin{equation}\label{epsilon2}
  \epsilon_k(u)=\beta_k \:\text{if}\;  \epsilon_n(u)=0. 
  \end{equation}

  Furthermore, we have: 
  
  \begin{equation}\label{epsilon3}
 \alpha_k=2 \;  \text{if and only if}\;  \beta_k=2. 
  \end{equation}

Indeed, suppose $\alpha_k=2$ but  $\beta_k\not =2$. W.l.o.g. we may suppose $\beta_k=1$. Thus for every $u\in [A']^2$, we have $\epsilon_k(u)=2$ if $\epsilon_n(u)=1$ and $\epsilon_k(u)=1$  if $\epsilon_n(u)=0$. This is impossible. Since the order $\leq_n$ has type $\omega$ we may find $x<_{0} z<_{0}y$ such that $z<_{n}x<_{n}y$. We have  $\epsilon_k(z,y)=2$ and $\epsilon_k(x,y)=2$. By transitivity of $\leq_k$ we have $\epsilon_k(x,z)=2$ which contradicts $\epsilon_k(x,z)=1$. 

Now, we may conclude: 
if $\alpha_k=2$ then $\beta_k=2$, hence, $\epsilon_k(u)=2$ for every $u\in [A']^2$ that is  $\leq'_k$ is  $A'^2$, the complete relation on $A'$.  If $\alpha_k=1$, then either $\beta_k=1$, or $\beta_k=0$. In the first case,   $\leq'_k$ is $\leq'_0$; in the second case, $\leq'_k$ is $\leq'_n$. Similarly, if $\alpha_k=0$, $\leq'_k$ to $A'$ is the dual of $\leq'_0$ if $\beta_k=0$, whereas it is the dual of $\leq'_n$ if $\beta_k=1$. \hfill $\Box$

Let $C'$ be the closure induced on $A'$, that is $C':= (A', \phi')$ where $\phi'(Z):= \phi(Z)\cap A'$ for every $Z\subseteq A'$. 
 
 The quasi-orders of the form $A'^2$ play no role in $C'$. Hence  from Claim \ref{claim:galvin}: 
 
\begin{claim}\label{claim:convex} 
There exists a subset $\mathcal L\subseteq  \{ (\leq'_0)^*, \leq'_n,  (\leq'_n)^*\}$ such that  $C'$ is the join of the $I(A', \leq')$ where $\leq'$ belongs to $\mathcal L\cup \{\leq'_0\}$.
\end{claim}

Since the join-semilattice $K(C')$ of compact elements of $C'$ embeds into $K(C)$ it suffices to prove that it embeds $\eta$ or $\Omega(\eta)$. 

There are $8$ cases to consider, $4$ yield an embedding of $\eta$, the $4$ remaining yield an embedding of $\Omega(\eta)$:

\begin{claim}\label{claim:cases} 
\begin{enumerate}

 \item  $\eta$ embeds into $K(C')$ if $\mathcal L$ does not contain $\{\leq'_{n}\}$, that is $ \mathcal L$ is either empty or equal to $\{(\leq'_0)^*\}$ or to $\{(\leq'_n)^*\}$ or  to $\{(\leq'_0)^*, (\leq'_n)^*\}$. 
\item $\Omega (\eta)$ embeds into $K(C')$  if $\mathcal L$  contains $\{\leq'_{n}\}$, that is $\mathcal L$ is equal to $\{\leq'_n\}$,  $\{\leq'_n,(\leq'_n)^*\}$ or $\{(\leq'_0)^*, \leq'_n\}$   or $\{\leq'_0, \leq'_n,(\leq'_n)*\}$. 
\end{enumerate}
\end{claim}
{\bf Proof of Claim \ref{claim:cases}.}
 We define a subset of $D$ which is isomorphic to $\eta$ or embeds  $\Omega(\eta)$. 
If $\mathcal L$ is empty, we  set $D:=\{\phi'(\{x\}): x\in A'\}$. In all other cases, $D$ is of the form $\{\phi'(\{x_0, x\}): x_0<'_0 x: x\in A'\}$ for some $x_0\in A'$. If  $\mathcal L=\{(\leq'_0)^*\}$, $x_0$ is arbitrary; in all other cases $x_0$ is the   least  element of $\leq'_n$. 

\noindent Item (1). If $\mathcal L=\emptyset$ then, trivially, $D$ is isomorphic to $(A', \leq'_0)$, hence, has order type $\eta$. Next,  observe that  $\phi'(\{x_0, x\})=\{y\in A': x_0\leq y\leq'_0 x\}$ if $\mathcal L$ is $\{(\leq'_0)^*\}$ or $\{(\leq'_0)^*, (\leq'_n)^*\}$, and $\phi'(\{x_0, x\})=\{y \in A': y\leq'_0 x\}$, if  $\mathcal {L}$  is $\{(\leq'_n)^*\}$. Hence,  $D$ forms a chain of type $\eta$. 

\noindent Item (2). If $\mathcal L=\{\leq'_n\}$ then $K(C')$ embeds $\Omega (\eta)$ by Theorem  \ref{mainduplex}. In fact, in this case,  as in case $\mathcal L= \{\leq'_n,(\leq'_n)^*\}$, we have $\phi'(\{x_0, x\})=\{y\in A': y\leq'_0 x \; \text{and}\;  y\leq'_n x   \}$. 
If $\mathcal L$ is $\mathcal L=\{(\leq'_0)^*, \leq'_n\}$  or if $\mathcal L= \{\leq'_0, \leq'_n,(\leq'_n)*\}$, then $\phi'(\{x_0, x\})=\{y\in A': x_0\leq'_0 y \leq'_0 x \; \text{and}\;  x_0\leq'_n y \leq'_n x   \}$. Hence, $D$ embeds $\Omega(\eta)$.

\hfill $\Box$

With this last claim, the proof of Theorem \ref{fin dim} is complete. 
\end{proof}

\begin{pr} 
As it stands, Theorem \ref{fin dim} does not allow to prove Theorem \ref{CP-like}. Extend the conclusion of Theorem \ref{fin dim} to join-semilattices of finite order-dimension. 
\end{pr}

\end{document}